\numberwithin{equation}{section}
\newtheorem{thm}{Theorem}[section]
\newtheorem{lem}[thm]{Lemma}
\theoremstyle{definition}
\newtheorem{rem}[thm]{Remark}
\def\weakto{\rightharpoonup}
\newcommand{\R}{\mathbb R}
\newcommand{\N}{\mathbb N}
\newcommand{\C}{\mathbb C}
\newcommand{\e}{\varepsilon}
\newcommand{\bighaa}[1]{\big( #1 \big)}
\newcommand{\bin}[2]{ \left( \hspace{-1mm}
                        \begin{array}{c}
                          #1 \\
                          #2 \\
                        \end{array}
                      \hspace{-1mm} \right) }
\newcommand{\rhomin}{ \rho_\ast }
\newcommand{\tilderhomin}{\tilde\rho_\ast}
\def\rscld#1{\gamma_\to #1}
\def\antirscld#1{\gamma_\leftarrow #1}
\DeclareMathOperator\supp{supp}
\DeclareFontFamily{U}{mathx}{\hyphenchar\font45}
\DeclareFontShape{U}{mathx}{m}{n}{
      <5> <6> <7> <8> <9> <10>
      <10.95> <12> <14.4> <17.28> <20.74> <24.88>
      mathx10
      }{}
\DeclareSymbolFont{mathx}{U}{mathx}{m}{n}
\DeclareMathAccent{\widecheck}{0}{mathx}{"71}
\begin{document}
\title[Boundary layers in a pile-up of dislocations walls]{Boundary-layer analysis of a pile-up of walls of edge dislocations at a lock}

\author{Adriana Garroni \and Patrick van Meurs \and Mark Peletier \and Lucia Scardia}

\address[A. Garroni]{Dipartimento di Matematica, Sapienza Universit\`a di Roma, Roma, Italy}
\email{garroni@mat.uniroma1.it}

\address[P. van Meurs]{Institute of Science and Engineering, Kanazawa University, Japan}
\email{pmeurs@staff.kanazawa-u.ac.jp}

\address[M. Peletier]{Department of Mathematics and Computer Science and Institute for Complex Molecular Systems, TU Eindhoven, The Netherlands}
\email{M.A.Peletier@tue.nl}

\address[L. Scardia]{Department of Mathematical Sciences, University of Bath, United Kingdom}
\email{L.Scardia@bath.ac.uk}

\begin{abstract}
In this paper we analyse the behaviour of a pile-up of vertically periodic walls of edge dislocations at an obstacle, represented by a locked dislocation wall. Starting from a continuum non-local energy $E_\gamma$
modelling the interactions---at a typical length-scale of $1/\gamma$---of the walls subjected to a constant shear stress, we derive a first-order approximation of the energy $E_\gamma$ in powers of $1/\gamma$ by $\Gamma$-convergence, in the limit $\gamma\to\infty$. 
While the zero-order term in the expansion, the $\Gamma$-limit of $E_\gamma$, captures the `bulk' profile of the density of dislocation walls in the pile-up domain, 
the first-order term in the expansion is a `boundary-layer' energy that captures the profile of the density in the proximity of the lock.

This study is a first step towards a rigorous understanding of the behaviour of dislocations at obstacles, defects, and grain boundaries.
\end{abstract}

\maketitle

\section{Introduction and motivation}

Dislocations are defects in the arrangement of the atoms in a metal lattice, and their distribution and motion greatly affect the macroscopic behaviour of the material. 
Dislocations do not only interact with other dislocations, but also with impurities, other defects, and interfaces. Understanding the behaviour of dislocations 
at grain boundaries and phase boundaries, in particular, has been the object of intensive research in academia and industry, but is still far from being achieved. 

The analysis of idealised pile-ups of two-dimensional point dislocations, representing the intersections of a system of straight and parallel dislocations with a cross-sectional plane, is a first attempt to shed light on this complex subject. The simplest case concerns a one-dimensional array of points, corresponding to dislocations in a single glide plane, forced against an obstacle by an external load. For this setup Eshelby, Frank and Nabarro \cite{EshelbyFrankNabarro51} pioneered a method for finding the equilibrium density of the dislocations, based on a polynomial representation, which is made possible by the specific structure of the interaction potential. This method has since been extended in various ways~\cite{VoskoboinikovChapmanMcLeodOckendon09, VoskoboinikovChapmanOckendonAllwright07, VoskoboinikovChapmanOckendon07}. Other approaches to the upscaling of dislocations are obtained via $\Gamma$-convergence  for  phase-field energies~\cite{GarroniMueller05, FocardiGarroni07, ContiGarroniMueller11}, for core-radius regularized models \cite{GarroniLeoniPonsiglione10, AlicandroDeLucaGarroniPonsiglione14}, and via homogenization for evolution equations~\cite{ElHajjIbrahimMonneau09, ForcadelImbertMonneau12, MonneauPatrizi12}.

In this paper we study a model that lies half-way between one and two dimensions. At the microscopic level, the system contains a \emph{large number of periodic walls of edge dislocations} with the same Burgers vector (see Figure~\ref{fig:setup:walls} and~\cite{RoyPeerlingsGeersKasyanyuk08, GeersPeerlingsPeletierScardia13, ScardiaPeerlingsPeletierGeers14, VanMeursMunteanPeletier14, Hall11}). The coordinate system is chosen so that the walls are vertical, and represented by their horizontal positions $\tilde x^n=(\tilde x^n_i)_{i=0}^n\in [0,\infty)^{n+1}$, with $\tilde x^n_0=0$, $n\in \mathbb{N}$. The energy of this system is given by
\begin{equation}
\label{discreteEcn-tilde}
\tilde E_n (\tilde x^n) := K_n \sum_{k=1}^n \sum_{j = 0}^{n - k} V \left(\frac{\tilde x_{j+k}^n - \tilde x_j^n}{h_n}\right) + \sigma_n\sum_{i = 1}^{n} \tilde x_i^n,
\end{equation}
where $K_n$ measures the elastic properties of the medium, $h_n$ is the distance between consecutive dislocations within a wall, $\sigma_n$ is an imposed shear stress, and the interaction energy potential $V$ (see Figure~\ref{PlotV}) is 
\begin{equation}
V(s):= s\coth s - \log (2\sinh s)\label{defV}
= \frac{2|s|}{(e^{2|s|}-1)} - \log(1-e^{-2 |s|}).
\end{equation}
The physical quantities $K_n$, $h_n$ and $\sigma_n$ are assumed to depend on $n$ since we are interested in the limit behaviour of the system for $n\to \infty$, and this depends strongly on the behaviour of all these parameters.

\begin{figure}[h!]
\subfigure[The walls]{
\labellist
\tiny
\pinlabel $\tilde x_0$ at 90 208
\pinlabel $\tilde x_1$ at 124 208
\pinlabel $\tilde x_2$ at 184 208
\pinlabel $\tilde x$ at 431 208
\footnotesize
\endlabellist
\includegraphics[width=2.4in]{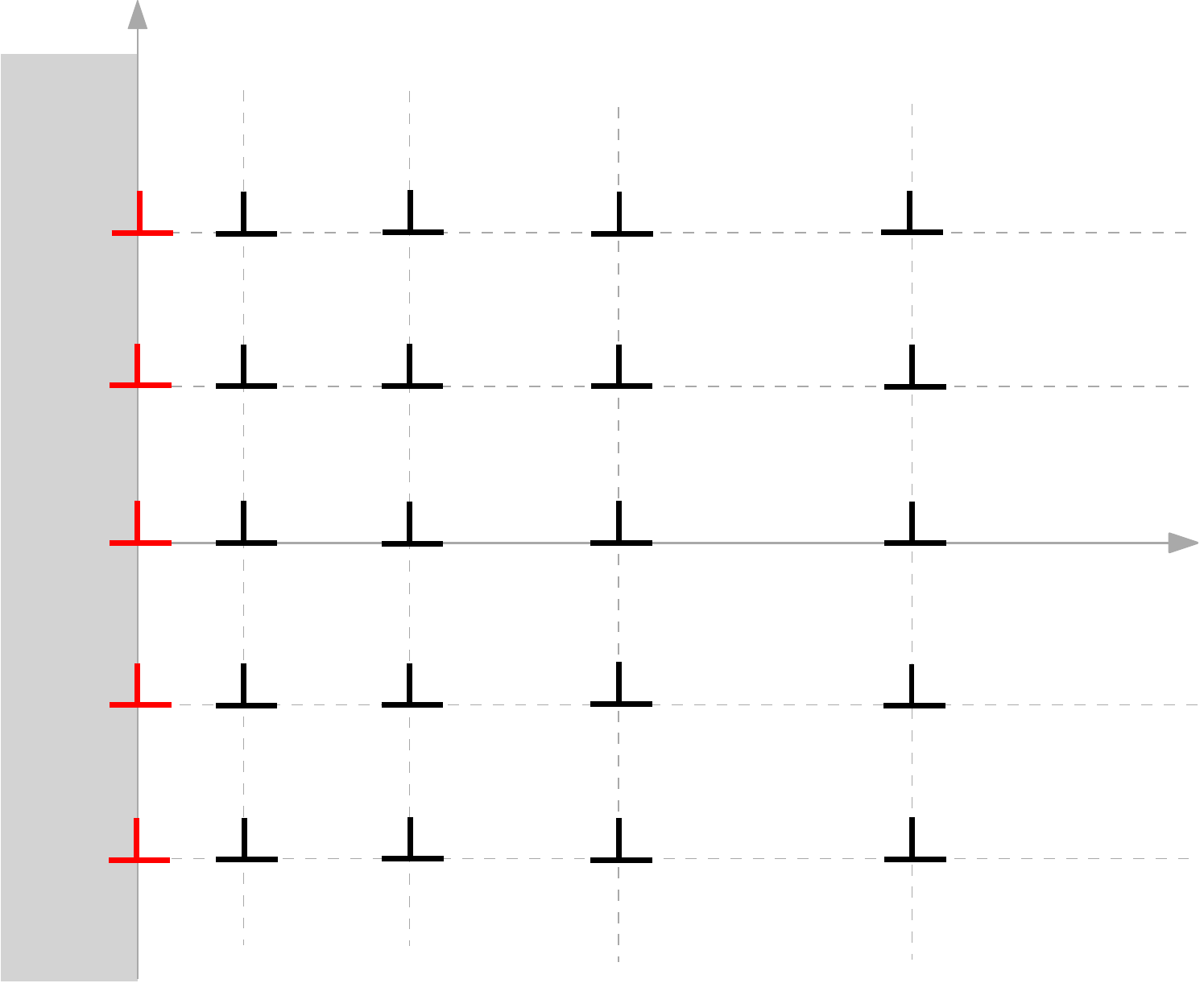}
\label{fig:setup:walls}
}
\hskip2cm
\subfigure[{The interaction energy  $V$ }]{
\labellist
\footnotesize
\pinlabel $V(s)$ [tr] at 160 100
\pinlabel $s$ at 345 -10
\pinlabel $\sim -\log|s|$ [tl] at 190 220
\pinlabel $\sim2|s|e^{-2|s|}$ [bl] at 230 30
\endlabellist
\includegraphics[width=2.2in]{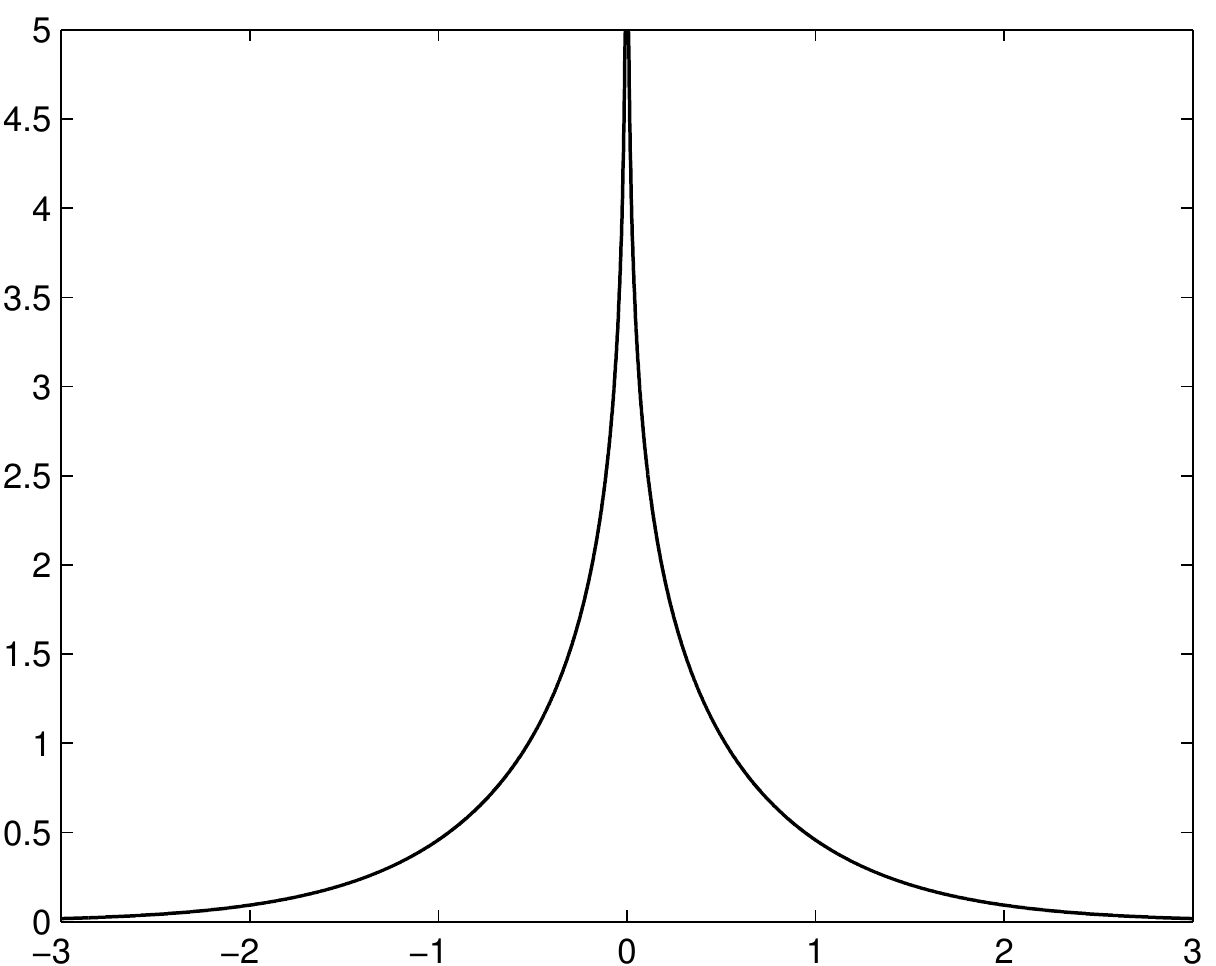}
\label{PlotV}
}
\caption{The dislocation configuration considered in this paper. Infinite, vertical \emph{walls} of equispaced dislocations are free to move in the horizontal direction. A wall is pinned at $\tilde x_0=0$ and acts as a repellent.}
\end{figure}

The interaction potential $V$ is obtained by adding up the dislocation-dislocation interaction potentials (derived from the Peach-K\"ohler force) of all the individual dislocations in a wall \cite[(19--75)]{HirthLothe82}. Note that, although the interaction between two dislocations can be attractive or repulsive, depending on their relative position, the wall-wall interaction is purely repulsive. For this reason, the first term of the energy \eqref{discreteEcn-tilde} favours configurations where dislocation walls are infinitely far from one another. The second term in \eqref{discreteEcn-tilde}, however, is a confinement potential, and models a constant shear stress acting on the system, which forces the walls against the obstacle at $\tilde x=0$. This obstacle is a locked dislocation wall.

\smallskip

As shown in \cite{GeersPeerlingsPeletierScardia13}, the problem can be rescaled to depend only on the single dimensionless parameter $\gamma_n$ \footnote{Our dimensionless parameter $\gamma_n$ corresponds to $n \beta_n$ in ~\cite{GeersPeerlingsPeletierScardia13}.}
\begin{equation*} 
  \gamma_n := \sqrt{ \frac{ n K_n }{ \sigma_n h_n } }.
\end{equation*}
Roughly speaking, $\gamma_n$ is a measure of the total length of the pile-up, relative to $h_n$. We will restrict our attention to the case when $1\ll \gamma_n\ll n$, which corresponds to arrangements where dislocations are closer horizontally than vertically, while the length of the pile-up region is larger than the in-wall spacing $h_n$. 

In term of the rescaled positions $x_i = \frac{\tilde x_i}{\gamma_n h_n}$ the (suitably rescaled) energy \eqref{discreteEcn-tilde} becomes 

\begin{equation}
\label{discreteEcn}
E_n (x^n) := \frac{\gamma_n}{n^2} \sum_{k=1}^n \sum_{j = 0}^{n - k} V \bighaa{ \gamma_n \bighaa{ x_{j+k}^n - x_j^n } } + \frac1n \sum_{i = 1}^{n} x_i^n.
\end{equation}

\smallskip

In \cite{GeersPeerlingsPeletierScardia13} upscaled continuum models were derived from $E_n$, by $\Gamma$-convergence, in the ``many-walls" limit $n\to \infty$, for different 
asymptotic behaviours of the parameter $\gamma_n$. In particular, for the scaling regime $1\ll \gamma_n \ll n$, the $\Gamma$-limit of $E_n$ was proved to be the continuum energy $E$ given by
\begin{equation}\label{limE3}
E(\mu) =\frac12 \Bigl(\int_\R V\Bigr) \int_0^\infty \rho(x)^2\, dx+ \int_0^\infty x\rho(x)\, dx, \quad \textrm{if}\ \ \mu(dx) = \rho(x)\, dx,\quad \supp \rho\subset [0,\infty),
\end{equation}
where $\mu_n:=\frac1n\sum_i\delta_{x_i^n}\weakto \mu$ as $n\to\infty$.

Figure \ref{DC3} shows a comparison between the minimiser of the discrete energy $E_n$, for $\gamma_n =\sqrt n$ and for large~$n$, and the minimiser $\rhomin$ of the continuum energy $E$. Note that the continuous minimiser $\rhomin$ is an affine function with slope $-(\int_\R V)^{-1}$ (see Remark \ref{rem:minz:E3}). In this figure we compare the two minimisers by plotting the corresponding densities; the \emph{discrete density} $\rho_n$ is defined in terms of the minimiser $x_{\ast}^n$ of $E_n$ as 
\begin{equation}\label{intro:rhon}
  \rho_n (x^n_{\ast, i}) := \frac{2/n}{ x^n_{\ast, i+1} - x^n_{\ast, i-1} },
  \qquad
  i = 1,\ldots,n-1.
\end{equation}

\begin{figure}[h]
\labellist
\pinlabel {\small Dislocation-wall position} [b] at 185 -27
\pinlabel \small $\rhomin$ at 337 230
\pinlabel \small $\rho_n$ at 337 250
\endlabellist
\begin{center}
\includegraphics[width=3in]{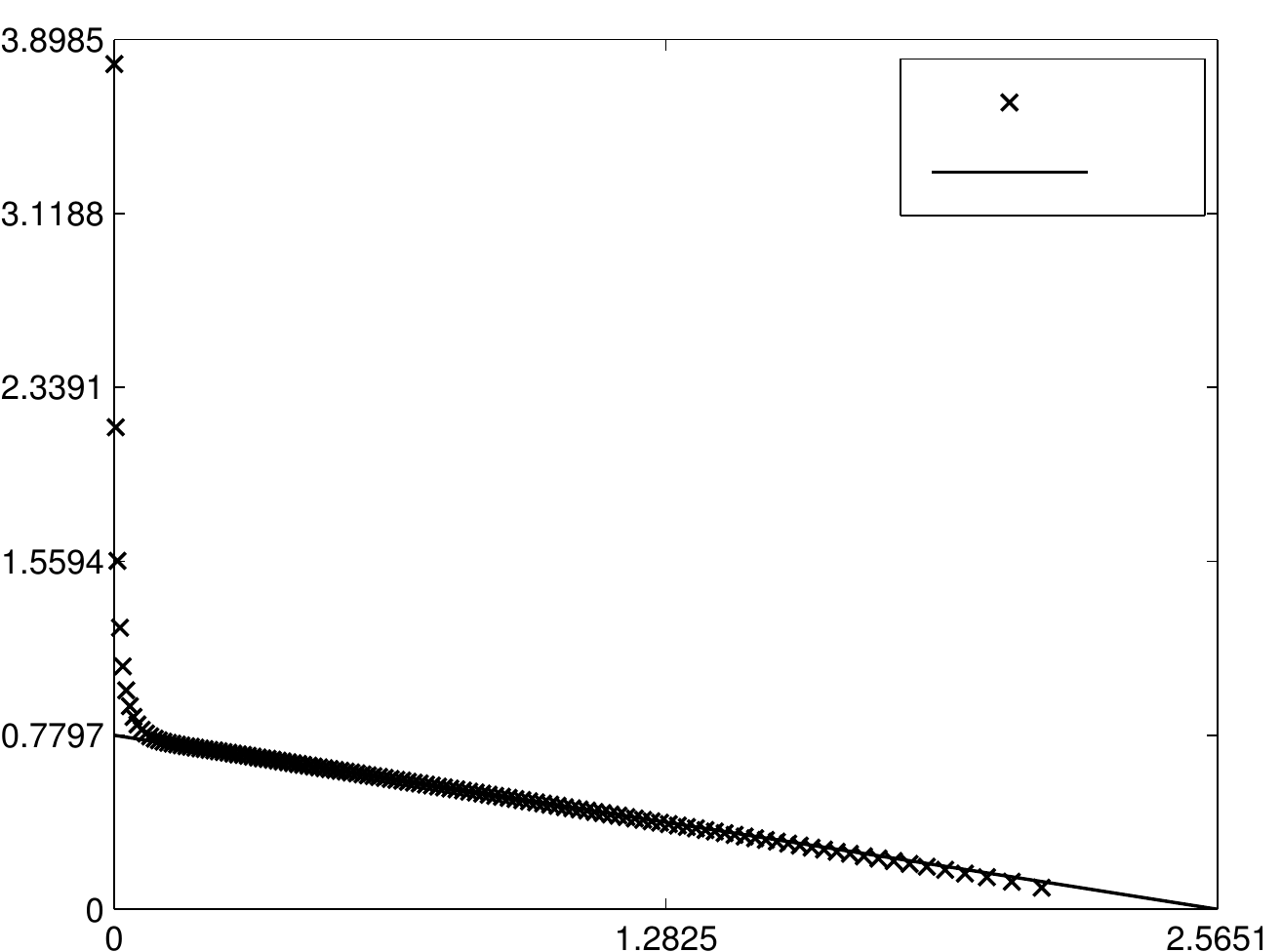}
\end{center}
\caption{Minimisers of $E_n$ and $E$, for $n=2^7$ and $\gamma_n = \sqrt n$. }
\label{DC3}
\end{figure}

\medskip

The figure illustrates well the starting point of this paper. The upscaled continuum model ($E$) fits very well with the discrete model ($E_n$) in the \textit{bulk} of the pile-up region; however, it fails to capture the distribution of dislocations at the two ends of the domain, where \textit{boundary layers} appear. In particular, the continuum model underestimates the dislocation density at the lock, and understanding the optimal arrangement of dislocations at the obstacle is the fundamental goal of the pile-up analysis.

\medskip

Inspired by this observation, the goal of this paper is to analyse the boundary layer at the lock at $x=0$, in the scaling regime $1\ll \gamma_n \ll n$. We do this by studying a  $\Gamma$-expansion of the energy $E_n$~\cite{AnzellottiBaldo93,BraidesCicalese07,BraidesTruskinovsky08} in terms of the small parameter $1/\gamma_n$.  The zero-order term of the expansion is the $\Gamma$-limit of the energy (namely $E$ in \eqref{limE3}), which describes correctly the bulk behaviour of the minimiser; the term of order $1/\gamma_n$ in the expansion, instead, is a first-order correction that captures boundary-layer effects. 

In order to capture the main features of the asymptotic expansion by $\Gamma$-convergence, and isolate them from the more technical issues related with the passage from discrete to continuum, we study the $\Gamma$-expansion of a \emph{continuum version} of $E_n$. To motivate this continuum version, note that  the discrete energies $E_n$ can formally be written in terms of discrete integrals with respect to the measures $\mu_n=\frac1n\sum_{i=1}^n\delta_{x^n_i}$, as\footnote{This expression is formal since $V(0)=+\infty$; to make it rigorous we should remove the diagonal $i=j$ from the product measure $\mu_n\otimes\mu_n= n^{-2}\sum_{i,j} \delta_{(x_i^n,x_j^n)}$.} 
\begin{equation*} 
E_n (x^n) = \frac12 \gamma_n \int_0^\infty \int_0^\infty V (\gamma_n(x-y))\mu_n(dx)\mu_n(dy) + \int_0^\infty x\mu_n(dx).
\end{equation*}
Inspired by this we define the continuum energy 
\begin{equation}\label{contEc}
E_\gamma (\mu):= \frac12\gamma \int_0^\infty \int_0^\infty V (\gamma(x-y))\mu(dx)\mu(dy) + \int_0^\infty x\mu(dx),
\end{equation}
which now is considered to be defined for all $\mu\in \mathcal{P}(0,\infty)$ -- not only for sums of Diracs. This is the functional  that we will study in this paper, in the limit $\gamma\to \infty$.

\subsection{Approximation of $E_\gamma$ by $\Gamma$-expansion} The discrete energies $E_n$ in \eqref{discreteEcn} and their continuous counterpart $E_\gamma$ in \eqref{contEc} both have as  $\Gamma$-limit the energy $E$ in \eqref{limE3}, as we show in Theorem~\ref{thm:Gamma:conv:E2c}. The limit energy $E$ is therefore the first term of the approximation of $E_\gamma$ by $\Gamma$-convergence, in the limit $\gamma\to\infty$.  Following \cite{AnzellottiBaldo93}, the next term of the expansion would be the $\Gamma$-limit of the rescaled energies 
\begin{equation}\label{firstorder-classical}
\frac{E_\gamma(\mu) - \min E}{1/\gamma} = \gamma\left(E_\gamma(\mu) - E(\rhomin)\right),
\end{equation} 
as $\gamma\to \infty$. These energies, however, do not seem appropriate for isolating the boundary-layer contribution at the lock $x=0$. Indeed, it is possible to show that there are two order-one contributions in \eqref{firstorder-classical}: one is due to the presence of the boundary layer (namely to the difference between $\rhomin$ and the minimiser of $E_\gamma$), 
and the other one to the approximation of $E_\gamma$ with $E$. The latter is actually a constant with respect to the variable $\mu$ (namely $\gamma(E_\gamma(\rhomin)-E(\rhomin))$) and does not affect the behaviour of the minimisers. Hence, to focus on the boundary-layer energy contribution only, we consider the functional 
\begin{equation}\label{intro:firstorder}
\gamma\left(E_\gamma(\mu) - E_\gamma(\rhomin)\right).
\end{equation} 
The energy in \eqref{intro:firstorder} has the advantage of {measuring} the difference between the minimiser of $E_\gamma$, which exhibits boundary layers, and its approximate bulk behaviour $\rhomin$, in terms of the \emph{same} energy $E_\gamma$ (rather than two different energies, as in \eqref{firstorder-classical}). For this reason it is genuinely a `boundary-layer energy', since the only contribution of order one in \eqref{intro:firstorder} is due to the different behaviour of the minimiser of $E_\gamma$ and $\rho_\ast$ close to $x=0$. 

\medskip

In order to capture the boundary-layer behaviour, we need to blow up the region next to $x=0$. The scaling factor for the blow up arises naturally from \eqref{contEc}, since the typical length-scale of $\gamma V (\gamma \, \cdot)$ is $1/\gamma$. As $\gamma \to \infty$, we have $\gamma V (\gamma \, \cdot) \to (\int_\R V) \delta_0$, which connects to the limiting energy $E$ \eqref{limE3}. This implies that fluctuations on the length-scale $1/\gamma$ contribute to the energy difference \eqref{intro:firstorder}, which suggests that the boundary layer is likely to have the same length-scale.

The boundary layer having length-scale $1/\gamma$ agrees with the formal analysis of Hall's~\cite{Hall11} for the \emph{discrete} problem with $\gamma_n = \sqrt n$. Figure~\ref{fig:tilde:rho:n2} illustrates this scaling, by showing how spatially-rescaled densities constructed from the discrete minimisers are similar close to the lock.
\begin{figure}[h]
\labellist
\pinlabel \scriptsize $n=2^4$ at 326 250
\pinlabel \scriptsize $n=2^6$ at 325 229
\pinlabel \scriptsize $n=2^8$ at 325 208
\endlabellist
\begin{center}
\includegraphics[width=3in]{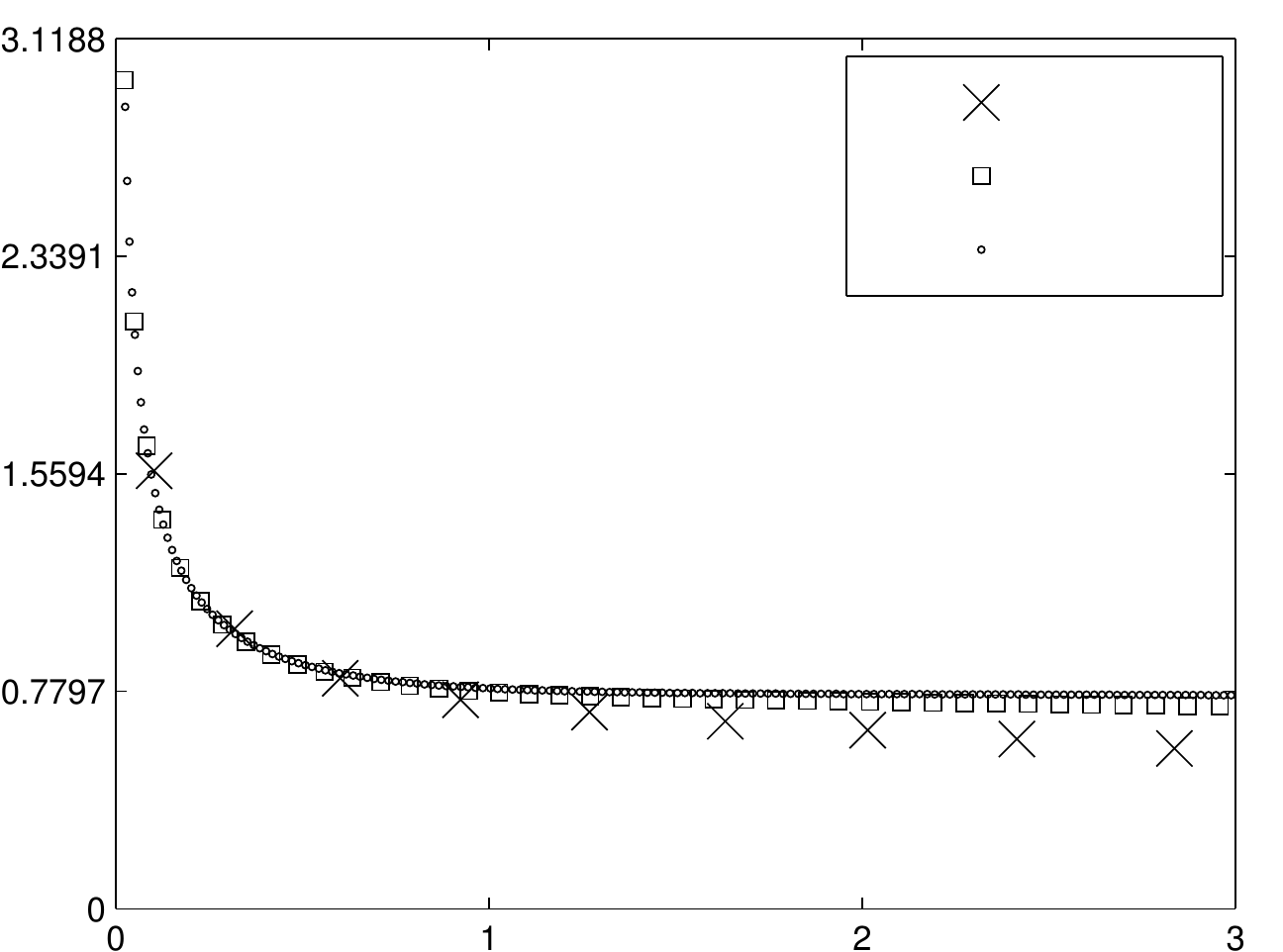}
\end{center}
\caption{Rescaled discrete densities $\gamma_n \rho_n$ (see~\eqref{intro:rhon}) plotted against the rescaled minimisers $\gamma_n x^n_\ast$, for $\gamma_n = \sqrt{n}$ and for different values of $n$. Close to the lock the data points seem to lie on a single curve, confirming Hall's prediction that the length-scale of the boundary layer is $O(1/\sqrt n) =O(1/\gamma_n)$. }
\label{fig:tilde:rho:n2}
\end{figure}

To capture effects on the length-scale $1/\gamma$ we zoom into $x=0$ by a factor $\gamma$. Since this rescaling operation will be used many times we introduce a corresponding notation: 
given a measure $\mu$, the rescaled measure $\rscld\mu$ is the measure 
\begin{equation}
\label{def:scalingOperator}
\rscld \mu := \gamma \; (x\mapsto \gamma x)_\#\mu,
\qquad \text{or, in terms of Lebesgue densities,}\qquad
(\rscld \mu)(x) := \mu(x/\gamma).
\end{equation}
This transformation zooms into the origin at rate $\gamma$, but preserves the amplitude of the (Lebesgue density of) the measure. The inverse transformation is written as $\antirscld \mu$.

Given a measure $\mu$, the appropriate zoomed-in version of $\mu$ describing the boundary layer is the measure obtained by blowing up the difference $\mu-\rhomin$,
\begin{equation}
\label{def:nu}
\nu := \rscld{[\mu-\rhomin]}.
\end{equation}
By definition, $\nu$ integrates up to zero over $[0, \infty)$, and it is bounded from below by $-\rscld \rhomin$. These properties define the admissible class $\mathcal{A}_\gamma$ in \eqref{def:Ac}. 
Note that $\nu$, unlike $\rscld \mu$, does not see the mismatch of the rescaled densities outside the domain $(0,1)$, shown in Figure \ref{fig:tilde:rho:n2}.

We rewrite (see Section \ref{sub:firstorder}) the energy \eqref{intro:firstorder} in terms of $\nu$ as 
\begin{multline} \label{def:Fc}
F_\gamma (\nu) := \frac12 \int_0^\infty \int_0^\infty V(x-y) \nu(dx)  \nu(dy) 
- \frac{1}{\sqrt{a}} \int_0^\infty \left( \int_x^\infty V(y) dy \right) \nu(dx) \\
+ \frac1{2 a \gamma} \int_0^\infty  \int_0^{\infty} \big[ V \left( y+ \left( 2\gamma\sqrt a - x \right) \right) - V(y+x) \big] y \, dy \, \nu(dx),
\end{multline}
where $a=\int_0^\infty V$, and all integrals (here and everywhere in the paper) denote integration over closed intervals.

The main result (see Theorem~\ref{thm:first:order:Gamma:conv}) is the characterisation of the limit behaviour of $F_\gamma$ as $\gamma\to\infty$. The main challenge in establishing this limit lies in controlling the interaction term, i.e.,~the first term in the right-hand side of \eqref{def:Fc}. Indeed, the admissible measures $\nu \in \mathcal{A}_\gamma$ are obtained by blowing up probability measures, and consequently we cannot expect their total variation to stay bounded in the limit $\gamma\to\infty$. Therefore, it is not clear how the interaction term can be defined for such $\nu$. For this reason, we impose non-negativity of the Fourier transform of $V$, which guarantees the interaction term to be non-negative. Moreover, this assumption allows for rewriting (see Lemma \ref{lem:T}) the interaction term as
\begin{equation*}
  \frac12 \int_0^\infty \int_0^\infty V(x-y) \nu(dx) \nu(dy)
  = \frac12\int_{\R} (T \nu)^2 (x) \, dx,
\end{equation*}
where $T$ is the linear operator (defined in \eqref{operator:T:on:Xp}) that satisfies $T^2 f = V \ast f$ for any $f$ in a suitable space. We think of $T$ as the `convolutional square root' of $V$. This operator provides us with the natural framework for our main theorem:

\begin{thm}\label{intro:thm}
The functionals $F_\gamma$ $\Gamma$-converge as $\gamma\to \infty$, with respect to the vague topology, to the energy $F$ defined as 
\begin{equation}
\label{def:F}
  F : \mathcal{A} \to \R,
  \qquad F(\nu) := 
  \frac12 \int_\R (T \nu)^2 (x) \, dx 
  - \frac{1}{\sqrt{a}} \int_0^\infty \left( \int_x^\infty V(y) dy \right) \nu(dx),
\end{equation}
where $\mathcal{A}$ is the admissible class of measures defined in \eqref{for:dom:F}. In addition, a sequence~$\nu_\gamma$ with $\sup_\gamma F_\gamma(\nu_\gamma)< \infty$ is compact in the vague topology.
\end{thm}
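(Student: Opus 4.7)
The plan is to decompose $F_\gamma$ into a non-negative quadratic part, a linear part, and a vanishing remainder, and then treat each separately. Using Lemma~\ref{lem:T}, write
\[
F_\gamma(\nu) \;=\; Q(\nu) \,+\, L(\nu) \,+\, R_\gamma(\nu),
\qquad Q(\nu):=\tfrac12\int_\R (T\nu)^2\,dx,
\qquad L(\nu):=-\tfrac1{\sqrt a}\int_0^\infty\!\Bigl(\int_x^\infty V\Bigr)\nu(dx),
\]
and $R_\gamma(\nu)$ is the last term in \eqref{def:Fc}. The candidate $\Gamma$-limit is $F=Q+L$. The rough division of labour is: $Q$ controls compactness (being non-negative, it produces an $L^2$-bound on $T\nu_\gamma$, i.e.\ an $H^{-s}$-type bound on $\nu_\gamma$) and is lower semicontinuous; $L$ is a genuine linear term that passes to the limit by vague convergence (its kernel $x\mapsto \int_x^\infty V$ is continuous, bounded, and exponentially decaying, hence a valid vague test function); and $R_\gamma$ is an $o(1)$ remainder.

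\textbf{Vanishing of the remainder and compactness.} Because $V(s)\sim 2|s|e^{-2|s|}$, the function $\psi(s):=\int_0^\infty V(s+y)\,y\,dy$ is bounded on $[0,\infty)$ and decays exponentially; similarly for the translated term involving $2\gamma\sqrt a-x$, which is uniformly small on the admissible support $[0,2\gamma\sqrt a]$ away from the right endpoint. Thus $R_\gamma(\nu)$ is bounded by $(C/\gamma)\cdot\bigl(\text{integral of $\nu_\gamma$ against a bounded decaying function}\bigr)$. Using $\nu_\gamma([0,\infty))=0$ and the pointwise bound $\nu_\gamma\ge -\rscld\rhomin$, both $\nu_\gamma^+$ and $\nu_\gamma^-$ integrated against any bounded decaying test are uniformly controlled once we establish local boundedness. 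For compactness, observe that $L(\nu)$ is bounded below on $\mathcal A_\gamma$ (by boundedness of its kernel and the admissibility lower bound), so $\sup_\gamma F_\gamma(\nu_\gamma)<\infty$ forces $\sup_\gamma\|T\nu_\gamma\|_{L^2}<\infty$. Since $\widehat V>0$ and vanishes only at infinity at an integrable rate, this $L^2$ bound on $T\nu_\gamma$ translates into a uniform bound on $\nu_\gamma$ in a suitable negative Sobolev space; combined with $\nu_\gamma\ge -\rscld\rhomin$, and because $\rscld\rhomin\to (1/\sqrt a)\mathbf 1_{[0,\infty)}$ locally, the negative part $\nu_\gamma^-$ is locally bounded and hence so is $\nu_\gamma^+$ (using zero total mass against smooth cut-offs and the $T$-bound). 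This yields vague pre-compactness, and in turn the bound $R_\gamma(\nu_\gamma)=O(1/\gamma)\to 0$.

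\textbf{Liminf.} Assume $\nu_\gamma\to\nu$ vaguely with bounded energy. By the compactness step, $T\nu_\gamma$ is bounded in $L^2(\R)$; extract a subsequence so that $T\nu_\gamma\rightharpoonup\zeta$ weakly in $L^2$. Testing against Schwartz functions and using the definition of $T$ as a Fourier multiplier identifies $\zeta = T\nu$; weak $L^2$-lower semicontinuity then gives $Q(\nu)\le\liminf Q(\nu_\gamma)$. For the linear part, the kernel $\int_x^\infty V$ is continuous and vanishes as $x\to\infty$, so it is a bona fide test function for vague convergence, giving $L(\nu_\gamma)\to L(\nu)$. Together with $R_\gamma(\nu_\gamma)\to 0$, this gives the liminf inequality $F(\nu)\le\liminf F_\gamma(\nu_\gamma)$.

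\textbf{Limsup.} For the recovery sequence, first treat the dense subclass of $\nu\in\mathcal A$ with compact support in $[0,\infty)$ and $\nu\ge -c\,\mathbf 1_{[0,\infty)}$ for some $c<1/\sqrt a$. For such $\nu$, the pointwise lower bound $-\rscld\rhomin(x) = -(1/\sqrt a) + O(x/\gamma)$ on the support of $\nu$ ensures $\nu\in\mathcal A_\gamma$ for all sufficiently large $\gamma$; taking $\nu_\gamma\equiv\nu$ gives $F_\gamma(\nu_\gamma)=F(\nu)+R_\gamma(\nu)\to F(\nu)$. For general $\nu\in\mathcal A$, approximate by truncating the support and slightly scaling the negative part to stay strictly above $-1/\sqrt a$; continuity of $F$ along such approximations (via dominated convergence for the $T\nu$ representation and the vague-continuous kernel in $L$) plus a standard diagonal argument conclude the $\limsup$ inequality. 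The main obstacle throughout is the compactness step: since $|\nu_\gamma|([0,\infty))$ may diverge as $\gamma\to\infty$, one cannot work in the space of bounded signed measures, and the whole argument hinges on the $T$-operator framework from Lemma~\ref{lem:T} providing a non-degenerate control of $\nu_\gamma$ weak enough to accommodate the blow-up yet strong enough to retain the admissibility lower bound locally.
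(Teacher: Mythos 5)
Your overall decomposition ($F_\gamma=Q+L+R_\gamma$ with $Q=\frac12\int(T\nu)^2$, $L=-\int g\,\nu$, and $R_\gamma=\int h_\gamma\,\nu$) is exactly the paper's, but the pivotal step of your compactness argument is wrong. You assert that $L$ is bounded below on $\mathcal A_\gamma$ ``by boundedness of its kernel and the admissibility lower bound'', and from this you deduce $\sup_\gamma\|T\nu_\gamma\|_2<\infty$. The admissibility lower bound $\nu\ge-\rscld\rhomin$ only controls $\nu^-$, hence only the \emph{positive} contribution $+\int g\,\nu^-$ to $L(\nu)=-\int g\,\nu^++\int g\,\nu^-$; the term $-\int g\,\nu^+$ is not bounded below, since $\nu^+$ may carry total mass of order $\gamma$ concentrated near $x=0$ where $g$ is of order one, so $L(\nu)$ can be as negative as $-c\gamma$ on $\mathcal A_\gamma$. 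A priori a bounded-energy sequence could therefore have $L(\nu_\gamma)\to-\infty$ and $Q(\nu_\gamma)\to+\infty$ simultaneously. The correct argument --- the technical heart of the paper --- is to rewrite $\int g\,\nu=\frac1{\sqrt a}\int_{-\infty}^0 V\ast\nu$ and prove the \emph{linear} bound $\int_{-\infty}^0 V\ast\nu\le C(\|T\nu\|_2+1)$ (which itself rests on the local estimate $\nu^+([r,s])\le C(\sqrt{s-r}\,\|T\nu\|_2+s-r)$, obtained from $\int_r^s V\ast\nu^+\ge c\,\nu^+([r,s])$ and $V\ast\nu=T^2\nu$). Only then does $C\ge\frac12\|T\nu_\gamma\|_2^2-C(\|T\nu_\gamma\|_2+1)$ close the loop. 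Everything downstream in your proof (local mass bounds, vague compactness, vanishing of $R_\gamma$, identification of the weak $L^2$ limit of $T\nu_\gamma$ as $T\nu$, and convergence of the non-compactly-supported test function $g$ against $\nu_\gamma$) depends on this bound, so the gap is not cosmetic. Relatedly, your estimate $R_\gamma=O(1/\gamma)$ uses only $\|h_\gamma\|_\infty\le C/\gamma$ against a measure of total variation up to $2\gamma$, which gives $O(1)$, not $o(1)$; the paper needs the refined bound $|h_\gamma|\le\sigma(\gamma)/\gamma$ on $[\sqrt\gamma,2\gamma\sqrt a-\sqrt\gamma]$ together with the local mass estimates to conclude, and for the liminf it only proves $\liminf\int h_\gamma\nu_\gamma\ge0$, not convergence to zero.

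Two further gaps in the limsup part. First, your recovery sequence $\nu_\gamma\equiv\nu$ for compactly supported $\nu$ is generally \emph{not} admissible: elements of $\mathcal A_\gamma$ must satisfy $\int_0^\infty\nu_\gamma=0$, whereas $\nu\in\mathcal A$ need not have zero mass; the paper repairs this by setting $\nu_\gamma=\nu-\sigma_\gamma\tilderhomin$ with $\sigma_\gamma=\gamma^{-1}\int\nu$ and checking that the correction is energetically negligible. Second, the density step is substantially harder than ``truncating the support and scaling the negative part'': a general $\nu\in\mathcal A$ is a measure with only $T\nu\in L^2$, and to reach the class on which the constant-type recovery sequence works one must mollify; showing that mollification does not increase $\|T\nu\|_2$ requires a positive-definite mollifier with $0\le\widehat\psi_\varepsilon\le1$ and a Fourier-side computation (the paper's Appendix~B), plus control of the mass leaking to $(-\infty,0)$. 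None of this is addressed in your sketch.
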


\noindent
We recall that convergence in the vague topology is defined as convergence against continuous functions with compact support. This topology is a natural choice, because we cannot expect the total variation of $\nu_\gamma$ to stay bounded in the limit $\gamma\to\infty$, but only locally bounded.

As a consequence of Theorem \ref{intro:thm} the functional $F$ achieves its minimum in $\mathcal A$; since $\mathcal A$ is convex and $F$ is strictly convex, this minimum is also unique.

\subsection{Approximation of the minimiser of $E_\gamma$ by `matching'.} The $\Gamma$-convergence result in Theorem~\ref{intro:thm} suggests an improved approximation of the minimiser of the energy $E_\gamma$ at the left boundary of the pile-up domain, where the \textit{bulk} density $\rhomin$ fails to describe the profile of the discrete density (see Figure \ref{DC3}).

Denoting with $\nu_*$ the minimiser of $F$, $\rhomin(0)+\nu_*$ is the \textit{blown-up} boundary layer profile, which corresponds to the behaviour of the minimiser of $E_\gamma$ close to the lock. In view of the $\Gamma$-convergence result, we can therefore define a `matched' continuous density in terms of the original, unscaled variables, as
\begin{equation} \label{for:defn:rhominbeta}
\rhomin^{\gamma}:= \rhomin + \antirscld\nu_\ast, \qquad\text{or, in terms of Lebesgue densities,}\qquad
\rhomin^\gamma (x) = \rhomin(x) + \nu_*(\gamma x),
\end{equation}
as the improved approximation of the minimiser of $E_\gamma$ in \eqref{contEc}. This expression appears also to be a good approximation of the \emph{discrete} optimal density $\rho_n$, as shown in Figure \ref{case3}. The agreement between $\rhomin^\gamma$ and the discrete density is striking, even for a small number of dislocation walls, except for the free end of the pile-up region, where a second boundary layer appears, whose analysis is beyond the scope of this paper.

\setlength{\unitlength}{0.75cm}
\begin{figure}[h]
\centering
\small
\subfigure[]{%
\includegraphics[width=2.31in]{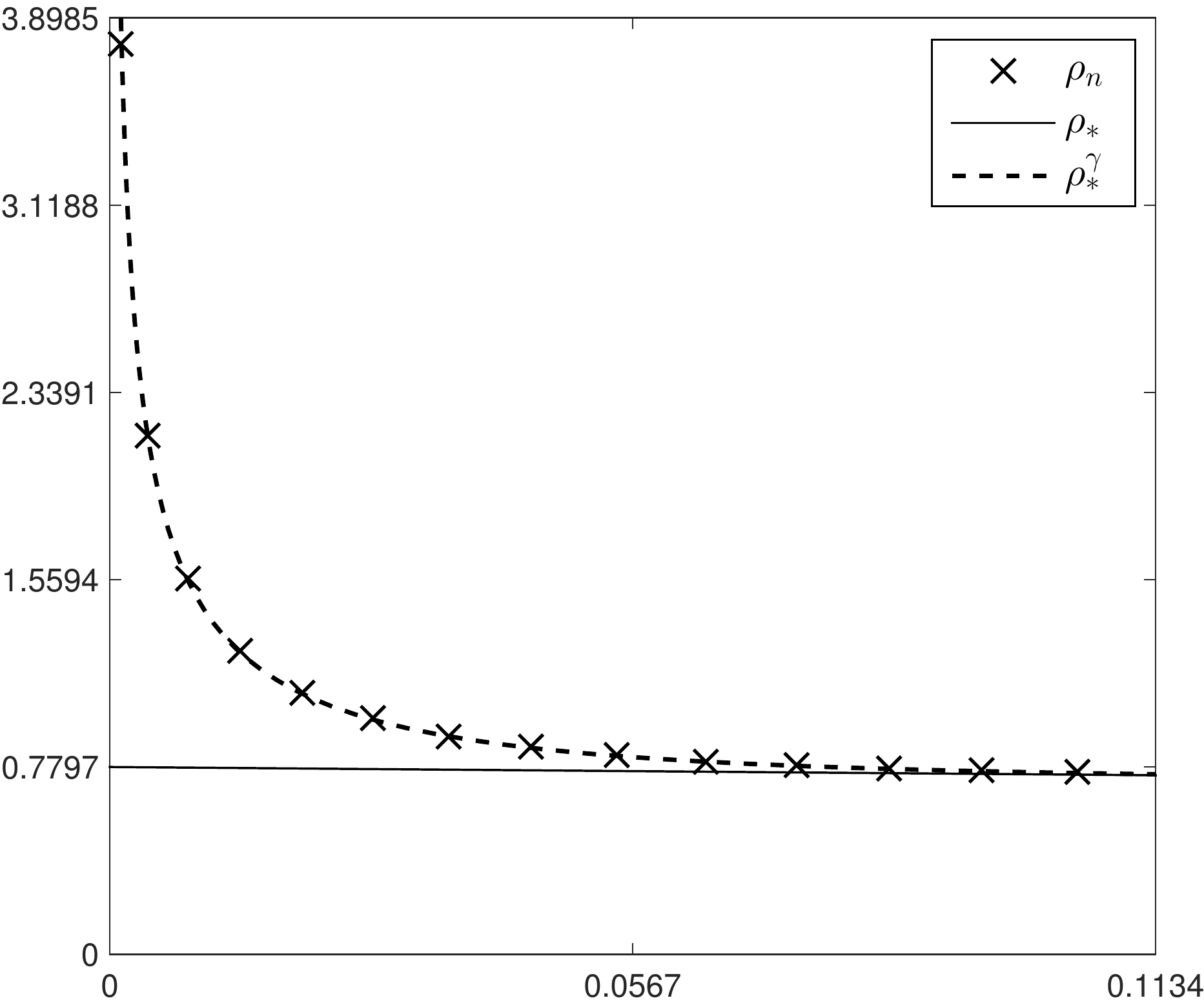}}
\quad \quad \hspace{.01cm}
\subfigure[]{%
\includegraphics[width=2.7in]{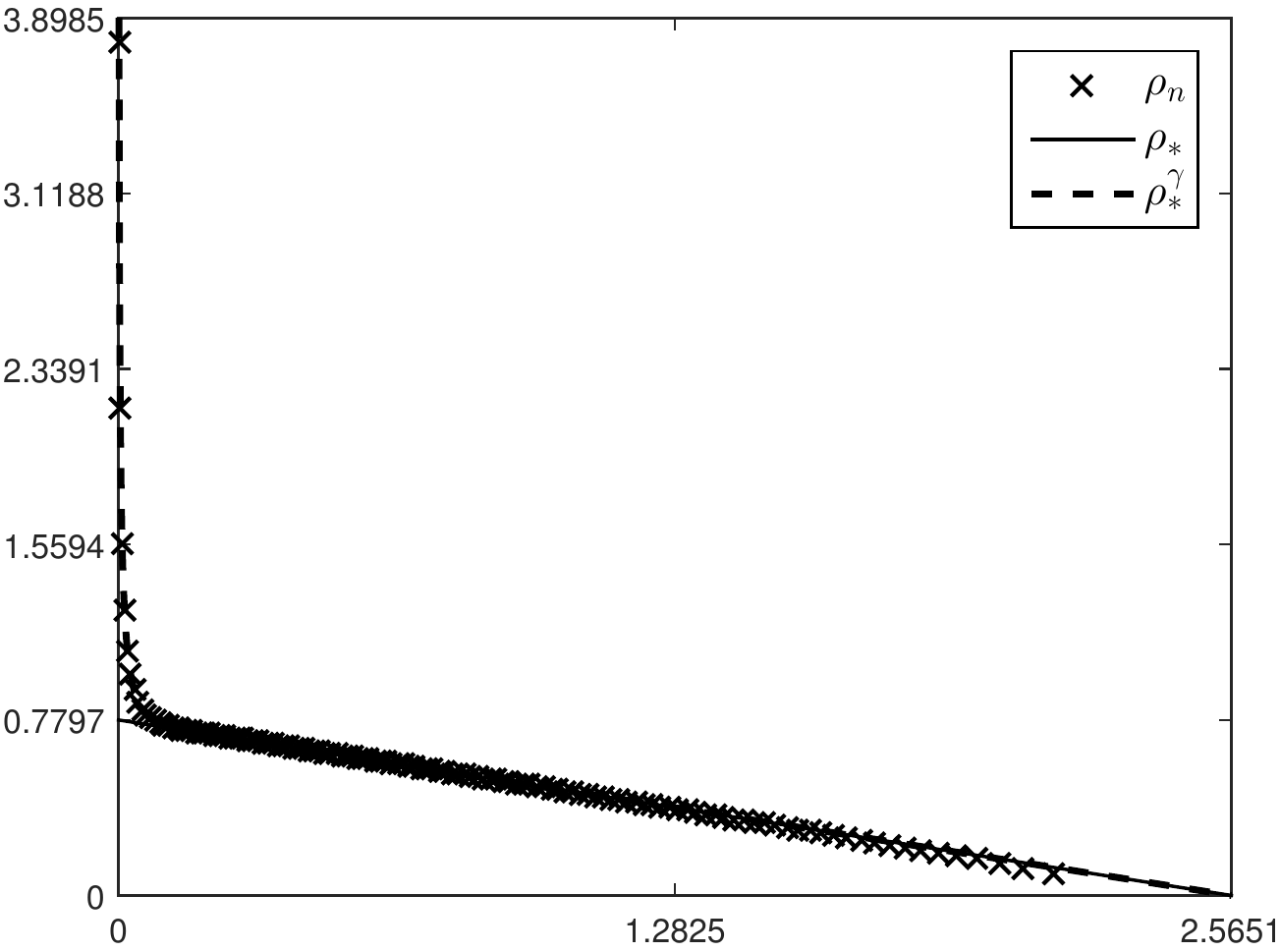}}
\put(-9.20,.9){\line(0,1){6.09}}
\put(-7,.9){\line(0,1){6.09}}
\put(-9.2,6.98){\line(1,0){2.2}}
\put(-9.2,.91){\line(1,0){2.2}}
\put(-9.21,.91){\line(-5,-2){1.41}}
\put(-9.22,6.98){\line(-5,-2){1.39}}
\caption{Comparison between the discrete pile-up profile $\rho_n$ in \eqref{intro:rhon}, the `bulk' density $\rhomin$ and the `matched' density  $\rhomin^{\gamma}=\rhomin + \gamma_\leftarrow\nu_\ast $, for $n = 2^7$ and $\gamma=\sqrt{n}$. The length of the $x$-axis in the left plot equals $\sqrt a / \gamma$.}
\label{case3}
\end{figure}

\subsection{Conclusion and comments.}

Theorem~\ref{intro:thm} gives a clear description of the boundary-layer behaviour of the pile-up at the lock through the minimiser of the limit energy. This theorem however only describes the boundary layer for the \emph{continuous} energy $E_\gamma$, while the `real' problem is discrete and described by $E_n$. Still, the predictions we obtain from Theorem~\ref{intro:thm} are remarkably accurate (see Figure~\ref{case3}). We now comment on some of the aspects of the result and the proof. 

\emph{Conditions on $V$.} The $\Gamma$-convergence of $E_n$ to $E$ is proved in \cite{GeersPeerlingsPeletierScardia13} under the assumptions that $V$ is even, of class $C^1$, integrable on $\R$, and decreasing and convex on the positive real line. As a consequence of these properties, $V$ is non-negative. A careful study of the proof in \cite{GeersPeerlingsPeletierScardia13} reveals that the convexity of $V$ can be relaxed to non-negativity of the Fourier transform of $V$ and the property that $V$ can be approximated from below in $L^1(\R)$ by continuous, positive definite functions\footnote{To see that convexity and regularity of $V$ implies $\widehat V \geq 0$, note that $\widehat V (\omega) = 2 \int_0^\infty V(x) \cos  2\pi\omega x \, dx = -(\pi\omega)^{-1} \int_0^\infty V'(x) \sin  2\pi\omega x \, dx$; this expression can be seen to be positive by considering each period of the sine, and using the monotonicity of $V'$.}. These properties appear again as natural requirements for the proof of Theorem \ref{intro:thm}, even though the setting contains no discreteness. 

In addition, we require that $V$ has finite first moment and that $\widehat V > 0$. The bound on the first moment is necessary to control the contribution to the energy difference in \eqref{intro:firstorder} which is related to the bulk of the full pile-up. Positivity of the Fourier transform (in addition to $\widehat V \geq 0$) is a technical requirement for gaining enough control on $\int (T \nu)^2$ in \eqref{def:F}.

\emph{Boundary layers in the discrete system.} The ultimate goal of this work is to state and prove an analogue of Theorem \ref{intro:thm} for the discrete energy $E_n$ in the limit $n \to \infty$. The current results can be seen as a first step towards such a convergence result, in which we tackle the scaling and compactness issues, but side-step the discreteness. 

\emph{Unexpected `dip' in the minimiser $\nu_\ast$.} Figure~\ref{fig:long:range:disc:vs:ct} gives another view on the comparison between the discrete and continuous densities, showing the boundary layers at both ends of the pile-up region in the same figure. Note that $\nu_\ast$ attains \emph{negative} values in the region where the boundary layer matches with the bulk density $\rho_\ast$. This `dip' indicates that in this region the dislocation walls tend to separate slightly farther than predicted by the bulk density. It is unclear to us why such a `dip' occurs. 

\setlength{\unitlength}{0.75cm}
\begin{figure}[h]
\centering
\small
\subfigure[]{%
\labellist
\pinlabel \scriptsize$n_1=2^{12}$ at 414 521
\pinlabel \scriptsize $n_2=2^8$ at 412 501
\pinlabel \scriptsize $n_3=2^4$ at 412 479
\pinlabel \scriptsize $\nu_\ast$ at 415 458
\endlabellist
\includegraphics[width=2.5in]{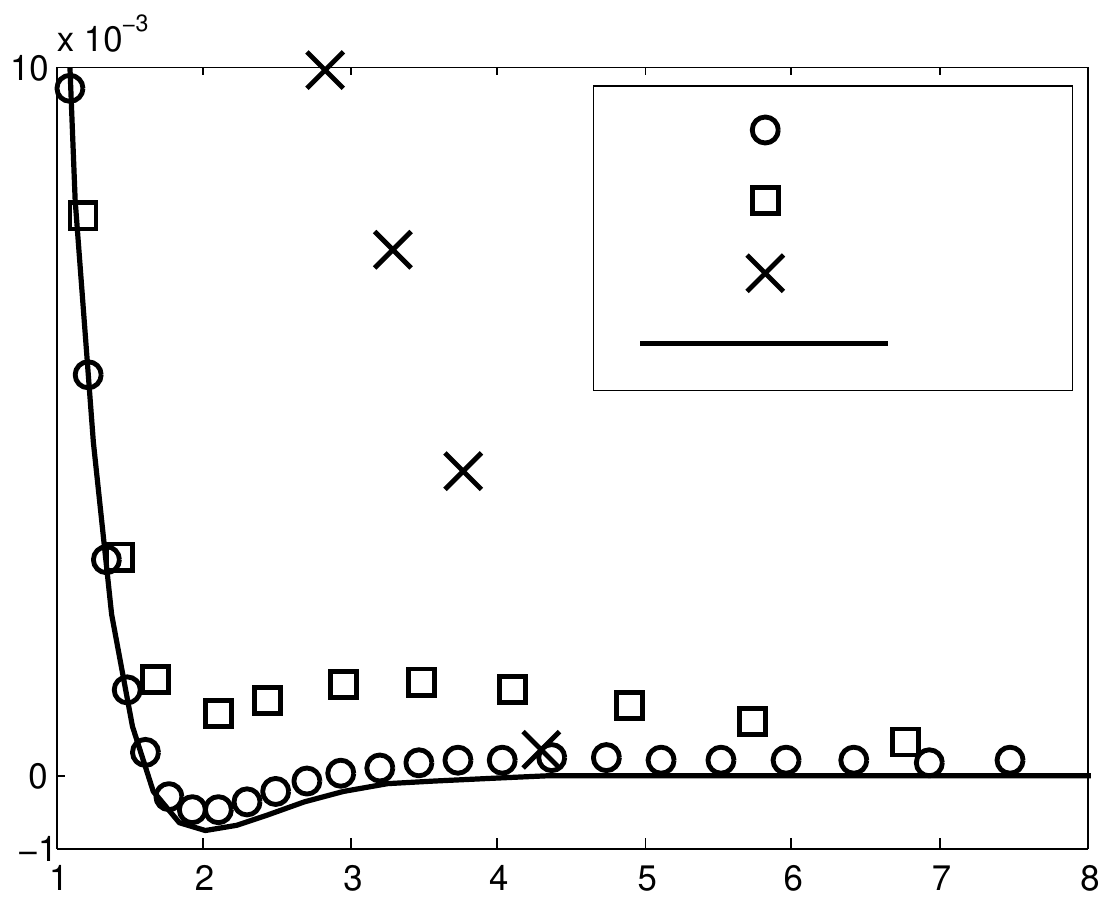}}
\qquad
\subfigure[]{%
\labellist
\pinlabel \scriptsize$n_1=2^{12}$ at 331 254
\pinlabel \scriptsize $n_2=2^8$ at 328 237
\pinlabel \scriptsize $n_3=2^4$ at 328 219
\pinlabel \scriptsize $\nu_\ast$ at 335 200
\endlabellist
\includegraphics[width=2.7in]{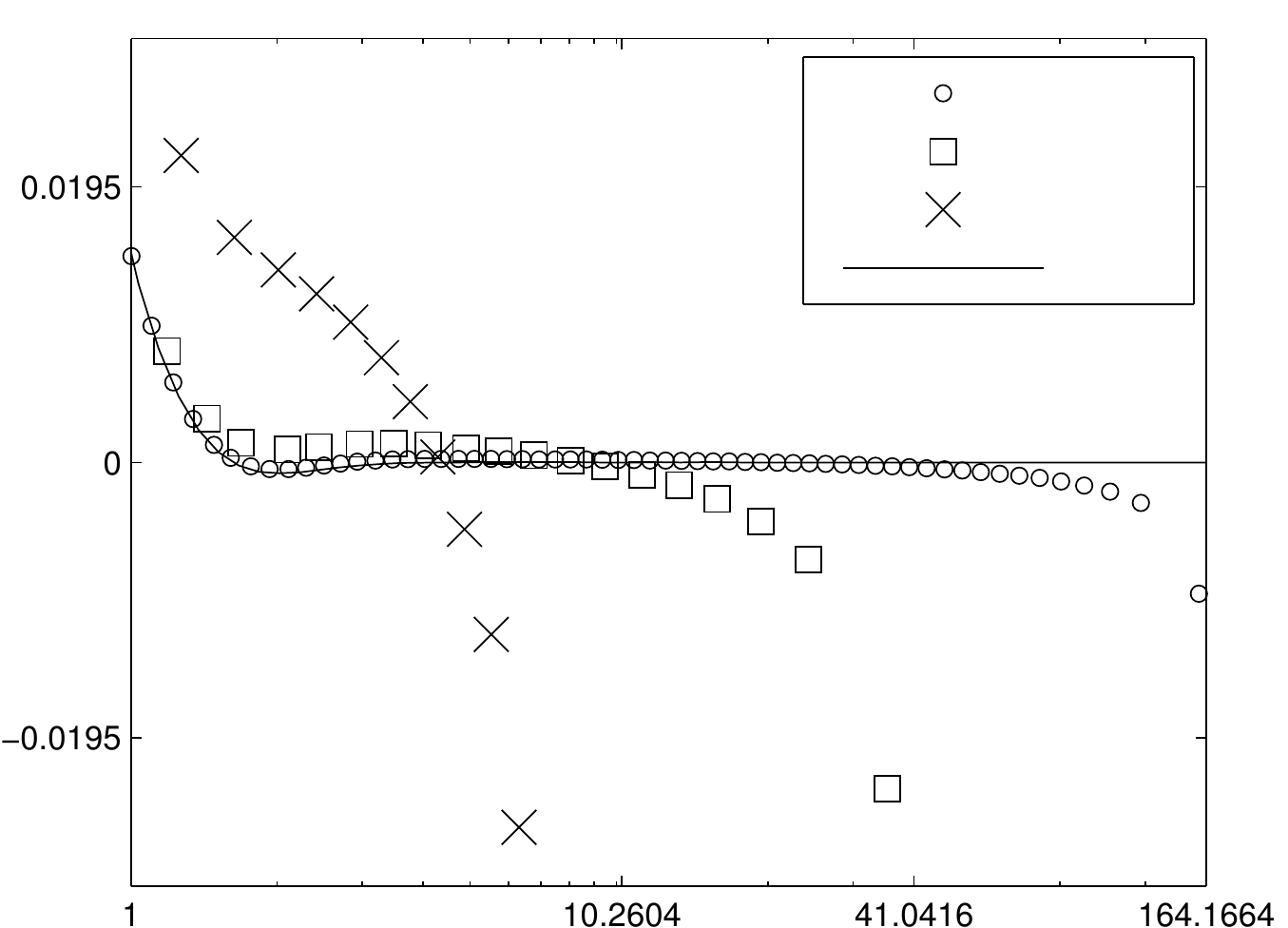}}
\put(-8.7,4.5){\line(0,-1){1.5}}
\put(-5.2,4.5){\line(0,-1){1.5}}
\put(-8.7,4.5){\line(1,0){3.5}}
\put(-8.7,3){\line(1,0){3.5}}
\put(-8.7,4.5){\line(-3,4){1.5}}
\put(-8.7,3){\line(-3,-5){1.5}}
\caption{Comparison between the discrete density $\nu_n$ and $\nu_\ast$ for $\gamma_n = \sqrt n$ and for different values of $n$. Many points are omitted to enhance readability. See Section \ref{Min} for a detailed description of these plots.}
\label{fig:long:range:disc:vs:ct}
\end{figure}

\medskip

In conclusion, Theorem \ref{intro:thm} together with the numerical illustrations predict accurately the local aggregation of dislocation walls at a lock. This aggregation is not recovered by the existing continuum models for dislocation pile-ups \cite{EversBrekelmansGeers04, Hall11, GeersPeerlingsPeletierScardia13} in the corresponding parameter regime $1 \ll \gamma_n \ll n$, because these models only predict the affine density profile of the bulk behaviour. Hence, these continuum models break down at a mesoscopic length-scale (given by $\mathcal O (1/\gamma_n)$ for $\gamma_n \ll \sqrt n$, based on numerical computations) near the lock, which is larger than the microscopic length-scale of neighboring dislocation walls, but smaller than the macroscopic length-scale of the full pile-up. As a result, these continuum models underestimate the stress around the lock. 

Theorem \ref{intro:thm} provides a way to fix this error by taking the effect on the mesoscale into account. Our results imply that the length-scale of the dislocation density is too coarse to describe the behaviour of dislocations close to grain boundaries. However, by taking into accounts effects on an intermediate mesoscopic length-scale, we can obtain an accurate description of the dislocations distribution in the pile-up region. We believe that such a mesoscopic description can be used to improve continuum models for the dislocation density at grain boundaries.

\medskip
The plan of the paper is as follows. In Section~\ref{sec:zero-order} we prove the convergence of $E_\gamma$ to $E$ (i.e., the zero-order approximation of $E_\gamma$); we then prove the main result, Theorem~\ref{intro:thm}, in Section~\ref{sect:firstorder}. In Section~\ref{sec:numerics} we discuss the agreement between discrete minimisers and the matched continuum density $\rhomin^{\gamma}$ \eqref{for:defn:rhominbeta} by means of numerical computations. Finally, Appendix \ref{app:T} and \ref{app:Step:1} are devoted to some technical steps in the proofs of results in previous sections.

\subsection{Assumptions and notation}

Although the expression~\eqref{defV} is the inspiration for this paper, and although we use~\eqref{defV} in the numerical calculations, the convergence results hold for a broader class of functions $V$. In Sections~\ref{sec:zero-order} and \ref{sect:firstorder} we make the following assumptions on $V$:
\begin{itemize}
\item[(V1)] $V:\R\to\R$ is even, and decreasing in $(0,\infty)$.
\item[(V2)] $V \in L^1(\R)$ has finite first moment, i.e., $\int_{\R} |V(x)|\,dx, \, \int_\R |x|V(x)\, dx< \infty$. 
\item[(V3)] $V$ has positive Fourier transform, which satisfies
\begin{equation*}
  \sqrt{ \widehat{V} } \in W^{2,\infty} (\R).
\end{equation*}
\item[(V4)] There exists a sequence $(V_k) \subset C_b (\R)$ of functions with non-negative Fourier transform such that $V_k \uparrow V$ pointwise a.e.~and $\|V - V_k\|_{L^1 (\R)} \to 0$ as $k \to \infty$.
\end{itemize}
Note that by (V1)--(V3), $V$ is non-negative and $\| \widehat{V} \|_\infty = \widehat{V} (0) = \int_{\R} V(x) \, dx < \infty$. We need the technical condition (V3) to define the `convolutional' square-root of $V$ by the operator $T$ as introduced in Theorem \ref{intro:thm} (see Section \ref{sub:firstorder}), and we rely on (V4) to rewrite the interaction term in \eqref{def:Fc} in terms of $T$.

\bigskip

Here we list some symbols and abbreviations that we use throughout the paper.

\begin{minipage}{15cm}
\begin{small}
\bigskip
\begin{tabular}{lll}
$a$ & $a = \int_0^\infty V=\frac12\int_{\R}V$ (${}= \pi^2/6$ in the case of~\eqref{defV})\\
$\mathcal A$, $\mathcal A_\gamma$ & admissible sets for $F$  and $F_\gamma$& \eqref{for:dom:F}, \eqref{def:Ac}\\
$\gamma_\to\mu$, $\gamma_\leftarrow\mu$ & transforms of $\mu$ by scaling space by $\gamma>0$ & \eqref{def:scalingOperator}\\
$E_n$ & discrete energy & \eqref{discreteEcn}\\
$E_\gamma$ & continuous energy at finite $\gamma$ & \eqref{contEc}\\
$E$ & (zero-order) limiting energy & \eqref{limE3}\\
$\widehat f$, $\mathcal{F}(f)$ & Fourier transform of $f$; $\mathcal{F}(f)(\omega) = \widehat f (\omega) := \int_\R f(x) e^{-2\pi ix\omega} \, dx$ \\
$\mathcal{F}^{-1}(f)$ & inverse Fourier transform of $f$;\\
$F_\gamma$ & first-order energy at finite $\gamma$ & \eqref{def:Fc}\\
$F$ & limiting first-order energy & \eqref{def:F}\\
$h_\gamma$ & auxiliary function in the expression of $F_\gamma$ & \eqref{hc:conv}\\
$\mathcal{H}^1$ & one-dimensional Hausdorff measure\\
$\mathcal{M}([0,\infty))$ & signed Borel measures on $\R$ with support in $[0,\infty)$\\
$\nu^+, \nu^-$ & positive and negative part of a measure $\nu \in \mathcal{M}([0,\infty))$; $
\nu^\pm\geq0$\\
$\mathcal{P}([0,\infty))$ &  non-negative Borel measures on $[0,\infty)$ of mass $1$ \\
$\rhomin$ & minimiser of $E$ & \eqref{def:rhomin}\\
$\tilderhomin$ & rescaled version of $\rho_*$; $\tilderhomin := \gamma_\to \rho_*$ &Sec.~\ref{sec:scalings} and \eqref{def:rhominstar}\\
$T$ & `convolutional square root' of $V$; $T^2 f = V \ast f$ & \eqref{operator:T:L2}, \eqref{operator:T:on:Xp} \\
$X$ & Hilbert space on which $T : X \to X$ is bounded, and $\mathcal A \subset X'$ & \eqref{for:defn:X}\\
$\|\cdot\|_q$ &$L^q$-norm, for $1\leq q\leq \infty$\\
$\|\cdot\|_{TV}$ &total variation norm of a signed Borel measure.
\end{tabular}
\end{small}
\end{minipage}
\medskip
For $\mu \in \mathcal{M}([0,\infty))$, we define $\int_a^b f \, d\mu := \int_{[a,b]} f \, d\mu$.

%

\section{Zero-order Gamma-convergence}
\label{sec:zero-order}
In this section we derive the zero-order term of the $\Gamma$-expansion of $E_\gamma$ in \eqref{contEc} in powers of $1/\gamma$. This term is the $\Gamma$-limit of $E_\gamma$ as $\gamma\to\infty$, in the \emph{narrow} or \emph{weak} topology (i.e.,~convergence against continuous and bounded functions). Not surprisingly, it turns out to be the same as the $\Gamma$-limit of the discrete energy $E_n$ in \eqref{discreteEcn}, namely the continuum energy $E$ defined in \eqref{limE3}. This is proved in the following theorem.
\begin{thm} \label{thm:Gamma:conv:E2c}
The $\Gamma$-limit of $E_\gamma$ as $\gamma\to\infty$ with respect to the narrow topology is given by 
\begin{equation*}
E(\mu) = \begin{cases} \displaystyle{a \int_0^\infty \rho(x)^2\, dx+ \int_0^\infty x\rho(x)\, dx,} \quad & \textrm{if}\,\, \mu(dx) = \rho(x)\, dx,\\
+\infty &  \textrm{otherwise}\,,
\end{cases}
\end{equation*}
where $a := \int_0^\infty V$. In addition, sequences with bounded $E_\gamma$ are compact in the narrow topology.
\end{thm}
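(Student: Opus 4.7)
Set $V_\gamma(x) := \gamma V(\gamma x)$, so that $\widehat{V_\gamma}(\omega) = \widehat V(\omega/\gamma)$, and write $I_\gamma(\mu) := \int_0^\infty\!\int_0^\infty V_\gamma(x-y)\, \mu(dx)\mu(dy)$ for the interaction term. By (V1)--(V3), $\widehat V$ is continuous, non-negative and bounded by $\widehat V(0) = \|V\|_1 = 2a$. The plan is the standard three-step $\Gamma$-convergence scheme together with compactness, all carried out via the Fourier-side representation of $I_\gamma$.

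\emph{Compactness.} Since $\widehat V \geq 0$, the kernel $V_\gamma$ is positive definite and $I_\gamma(\mu_\gamma) \geq 0$. A uniform bound on $E_\gamma(\mu_\gamma)$ therefore implies $\sup_\gamma \int_0^\infty x\, d\mu_\gamma \leq C$, which by Markov's inequality yields $\mu_\gamma([R,\infty)) \leq C/R$, hence uniform tightness on $[0,\infty)$. Prokhorov's theorem produces a narrowly convergent subsequence.

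\emph{Liminf.} Let $\mu_\gamma \weakto \mu$ narrowly; we may assume $\liminf E_\gamma(\mu_\gamma) < \infty$. For the confinement term, $\int (x\wedge R)\,d\mu_\gamma \to \int (x\wedge R)\,d\mu$ since $x\wedge R$ is bounded continuous, and monotone convergence as $R\to\infty$ gives $\int x\,d\mu \leq \liminf \int x\,d\mu_\gamma$. For the interaction, invoke (V4) to choose $V_k \in C_b(\R)$ with $\widehat{V_k} \geq 0$ and $V_k \uparrow V$; for each such $V_k$, Bochner's theorem combined with Fubini yields
\begin{equation*}
I_{k,\gamma}(\mu_\gamma) := \int\!\int \gamma V_k(\gamma(x-y))\, \mu_\gamma(dx)\mu_\gamma(dy) = \int_\R \widehat{V_k}(\omega/\gamma)\, |\widehat{\mu_\gamma}(\omega)|^2\, d\omega,
\end{equation*}
and $V_k \leq V$ gives $I_\gamma(\mu_\gamma) \geq I_{k,\gamma}(\mu_\gamma)$. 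Narrow convergence forces $\widehat{\mu_\gamma}(\omega) \to \widehat\mu(\omega)$ pointwise, while continuity of $\widehat{V_k}$ at $0$ gives $\widehat{V_k}(\omega/\gamma) \to \widehat{V_k}(0) = \|V_k\|_1$. Fatou's lemma, followed by $k \to \infty$ with $\|V_k\|_1 \to 2a$, yields
\begin{equation*}
\liminf_{\gamma \to \infty} I_\gamma(\mu_\gamma) \geq 2a \int_\R |\widehat\mu(\omega)|^2\, d\omega.
\end{equation*}
If the right-hand side is finite then by Plancherel $\mu$ has an $L^2$ density $\rho$ and the bound reads $2a\|\rho\|_2^2$; otherwise it is $+\infty$, matching $E(\mu) = +\infty$. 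Combined with the confinement bound this is $\liminf E_\gamma(\mu_\gamma) \geq E(\mu)$.

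\emph{Limsup.} For $\mu \in \mathrm{dom}\,E$, write $\mu = \rho\,dx$ with $\rho \in L^1 \cap L^2$ and $\int x\rho\,dx < \infty$, and take the constant recovery sequence $\mu_\gamma := \mu$. Plancherel (now available thanks to $\rho \in L^2$) gives $I_\gamma(\mu) = \int_\R \widehat V(\omega/\gamma)|\widehat\rho(\omega)|^2\,d\omega$, and since $0 \leq \widehat V(\omega/\gamma) \leq 2a$ with $|\widehat\rho|^2 \in L^1$, dominated convergence yields $I_\gamma(\mu) \to 2a\|\rho\|_2^2$; combined with the (constant) confinement term, $E_\gamma(\mu) \to E(\mu)$.

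The main obstacle is the liminf of the interaction: the logarithmic singularity of $V$ at the origin prevents a direct application of Plancherel to general probability measures $\mu_\gamma$. Assumption (V4) is tailored exactly for this, allowing a clean Fourier representation at the level of bounded continuous positive-definite approximants, which recovers the full interaction energy in the limit and, as a byproduct, forces the absolute continuity and $L^2$-integrability of any narrow limit $\mu$ with finite liminf.
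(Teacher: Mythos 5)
Your proof is correct, but it takes a genuinely different route from the paper's. The paper keeps this theorem short: tightness from $E_\gamma(\mu)\geq\int_0^\infty x\,\mu(dx)$, the liminf inequality by adapting the discrete-to-continuum argument of \cite[Theorem~7]{GeersPeerlingsPeletierScardia13}, and the limsup by first reducing by density to $\rho\in C_c([0,\infty))$ and then using the vague convergence of the kernel $\gamma V(\gamma(x-y))$ to $2a\,\mathcal H^1\llcorner D$ on the diagonal. You instead work entirely on the Fourier side: for the liminf you use (V4) to minorise $V$ by continuous positive-definite $V_k$, write the regularised interaction as $\int\widehat{V_k}(\omega/\gamma)|\widehat{\mu_\gamma}(\omega)|^2\,d\omega$, and apply Fatou; for the limsup you take the constant recovery sequence for \emph{any} $\rho\in L^1\cap L^2$ and use Plancherel plus dominated convergence, dispensing with the density reduction altogether. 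This buys a self-contained argument (the paper's liminf is outsourced to a reference), a limsup that avoids checking continuity of $E$ along an approximating sequence, and, as a by-product, the observation that finiteness of the limit interaction forces $\widehat\mu\in L^2$ and hence absolute continuity of $\mu$; it also anticipates the Fourier machinery the paper only deploys later for the operator $T$. Two small imprecisions, neither fatal: $\widehat{V_k}(0)=\int_\R V_k$ rather than $\|V_k\|_1$ unless $V_k\geq0$ (but $\int_\R V_k\to 2a$ by (V4), which is all you use), and the inequality $I_\gamma(\mu_\gamma)\geq I_{k,\gamma}(\mu_\gamma)$ relies on $V_k\leq V$ only Lebesgue-a.e., which deserves a word when $\mu_\gamma\otimes\mu_\gamma$ may charge Lebesgue-null sets --- the same caveat applies to the paper's own use of (V4) in Appendix~\ref{app:T}.
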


\begin{proof}
Compactness, i.e., tightness, is a  consequence of the inequality $E_\gamma(\mu)\geq \int_0^\infty x\, \mu(dx)$. The proof of the liminf inequality follows from a simple modification of the proof of the discrete-to-continuum convergence in \cite[Theorem~7]{GeersPeerlingsPeletierScardia13}. 

As for the limsup inequality, we first reduce by density arguments to $\rho \in C_c( [0,\infty ))$. Then we take as recovery sequence the constant sequence $\rho_\gamma := \rho$. Note that $V_\gamma (x, y) := \gamma V (\gamma (x-y))$ converges vaguely to $2 a \mathcal H^1 \llcorner D$, where $D$ is the diagonal of the first quadrant, 
$$
D := \{ (x,y) \in (0, \infty)^2 \, | \, x = y \}.
$$
Then, we conclude the limsup inequality from
\begin{align*}
    \lim_{\gamma \rightarrow \infty} \int_0^\infty \int_0^\infty V_\gamma (x,y) \rho(x) \rho (y) \, dx \, dy
    = 2a \int_0^\infty \int_0^\infty \rho(x) \rho (y) \mathcal H^1 \llcorner D ( dx\,dy )
    = 2a \int_0^\infty \rho^2.
\end{align*}
\end{proof}

\begin{rem} \label{rem:minz:E3}
We recall that the minimiser of $E$ is given by 
\begin{equation}
\label{def:rhomin}
 \rhomin (x) := \left(\frac 1{\sqrt a} - \frac x{2 a}\right)^+, \qquad E ( \rhomin ) = \frac 43 \sqrt a\,,
\end{equation}
hence the rescaled version of $\rhomin$ is, according to the definition \eqref{def:scalingOperator},
\begin{equation}
\label{def:rhominstar}
 \tilderhomin (x) := \rscld{\rhomin}(x) = \left(\frac 1{\sqrt a} - \frac x{2  \gamma a}\right)^+.
\end{equation}
\end{rem}


\section{First-order Gamma-convergence}\label{sect:firstorder}
In this section we prove Theorem~\ref{intro:thm} (see Theorem~\ref{thm:first:order:Gamma:conv}), which states the $\Gamma$-convergence of $F_\gamma$ in~\eqref{def:Fc} to $F$ in~\eqref{def:F}. 

\subsection{Preliminary results}\label{sub:firstorder}

\emph{Rewriting the energy.} Here we show that under the blow-up transformation~\eqref{def:nu}, which transforms $\mu$ into $\nu$, the energy $E_\gamma$ converts into the expression for $F_\gamma$ in~\eqref{def:Fc}. 

Setting $\tilde \mu^\gamma:= \rscld \mu$, we write
$$
\gamma E_\gamma(\mu) = \frac12 \int_0^\infty \int_0^\infty V(x-y)\tilde \mu^\gamma(dx)\tilde\mu^\gamma(dy) + \frac1\gamma \int_0^\infty x\tilde\mu^\gamma(dx)=: \tilde E_\gamma(\tilde \mu^\gamma).
$$
Note that $\int_0^\infty\tilde\mu^\gamma =\gamma$. By defining as in \eqref{def:nu} $\nu := \tilde\mu^\gamma - \tilde\rho_\ast$ we write, by using Fubini's theorem,
\begin{align}
F_\gamma (\nu) &:= \gamma\left(E_\gamma(\mu) - E_\gamma(\rhomin)\right) = \tilde E_\gamma(\tilde \mu^\gamma) - \tilde E_\gamma(\tilderhomin) \nonumber\\
&= 
\frac12 \int_0^\infty \int_0^\infty V(x-y) \nu(dx)  \nu(dy) + \int_0^\infty \left(\int_0^\infty V(x-y)\rhomin(y/\gamma)dy + \frac x \gamma \right) \nu(dx).
\label{eqn:Fc:untidy} 
\end{align}
For the sake of notation, we omit the superscript $\gamma$. The measures $\nu$ obtained via the rescaling~\eqref{def:nu} belong to the admissible class $\mathcal{A}_\gamma$ defined as
\begin{equation}
\label{def:Ac}
\mathcal{A}_\gamma
:= 
\left\{\nu \in \mathcal M([0,\infty))
: 
\int_0^\infty |\nu| \leq 2\gamma, 
\quad \int_0^\infty \nu=0,
\quad \nu (dx) \geq -\tilderhomin (x) dx \right\}.
\end{equation}
Whenever convenient, we extend $\nu$ to $\R$ by zero without changing notation. Note that by~\eqref{def:Ac} and~\eqref{def:rhominstar}, 
\begin{equation}
\label{prop:supp-nu-}
\supp\nu^-\subset \supp\tilderhomin = [0,2\gamma\sqrt a] \quad\text{for each $\nu\in \mathcal A_\gamma$}.
\end{equation}

Next we rewrite $F_\gamma (\nu)$. By using the explicit expression of~$\rhomin$ in \eqref{def:rhomin} and the fact that $V$ is even, the integrand of the second term in \eqref{eqn:Fc:untidy} can be cast into
\begin{align}\label{rewriting_int:2}
  &\int_0^\infty V(x-y)\rhomin(y/\gamma)dy + \frac x\gamma \nonumber\\
  &= \frac1{\sqrt a} \int_0^{2\gamma\sqrt{a}} V(x-y)dy 
     -\frac1{2a\gamma}\int_0^{2\gamma\sqrt{a}} V(x-y)ydy 
     + \frac1{2a\gamma}\int_{\R} V(x-y)xdy \nonumber\\
  &= \frac1{\sqrt a} \int_0^{2\gamma\sqrt{a}} V(x-y)dy  
     + \frac1{2a\gamma}\int_{\R} (x-y)V(x-y) dy 
     + \frac1{2a\gamma}\int_{ \R{} \setminus (0, 2\gamma\sqrt{a}) } V(x-y)ydy \nonumber\\
  &= \frac1{\sqrt a} \int_0^\infty V(x-y)dy  
      + \frac1{2a\gamma}\int_{-\infty}^0 V(x-y)ydy
     + \frac1{2a\gamma}\int_{2\gamma\sqrt{a}}^\infty V(x-y) (y - 2\gamma\sqrt{a}) dy \nonumber\\
  &= 2\sqrt a - \frac1{\sqrt a} \int_x^\infty V(y)dy  
     + h_\gamma(x),
\end{align}
where we have set
\begin{align}\label{hc:conv}
h_\gamma (x)
:= \frac1{2 a \gamma} V \ast \left( ( x \wedge 0 ) \vee (x - 2 \gamma \sqrt a ) \right) 
= \frac{1}{2a\gamma} \int_0^{\infty} \big[ V \left( y+ \left( 2\gamma\sqrt a - x \right) \right) - V(y+x) \big] y \, dy.
\end{align}
Substituting \eqref{rewriting_int:2} in \eqref{eqn:Fc:untidy} and using the fact that $\int_0^\infty \nu = 0$, we obtain
\begin{gather} \label{Fc}
F_\gamma (\nu) 
= \frac12 \int_0^\infty  (V \ast \nu)(x) \nu(dx) 
- \int_0^\infty g(x) \nu(dx)
+ \int_0^\infty h_\gamma(x) \nu(dx), \\\label{eqn:def:g}
\text{where } \: g(x) := \frac1{\sqrt a} \int_x^\infty V(y)dy.
\end{gather} 
Note that, due to assumptions (V1) and (V2), $g \in C_b ([0,\infty)) \cap L^1 (0, \infty)$ is non-negative and non-increasing.

\bigskip

\textit{Properties of the auxiliary function $h_\gamma$.} In the following lemma we derive some useful estimates for
$h_\gamma$. The qualitative behaviour of $h_\gamma$ is illustrated in Figure \ref{fig:plot:hc}.

\begin{lem} \label{lem:hc}
The function $h_\gamma$ defined in \eqref{hc:conv} is non-decreasing and satisfies the bounds
\begin{align}
&|h_\gamma| \leq C\frac{1}{\gamma} \quad \,\,\, \textrm{in}  \quad \left[0, 2\gamma\sqrt{a}\right],\label{hc1}\\
&|h_\gamma| \leq \frac{\sigma(\gamma)}\gamma \quad \textrm{in} \quad \left[\sqrt \gamma, 2\gamma\sqrt{a}-\sqrt \gamma\right],\label{hc2}
\end{align}
for some $\sigma(\gamma)\to 0$ as $\gamma\to \infty$.
\end{lem}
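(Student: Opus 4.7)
The plan is to exploit the two equivalent representations of $h_\gamma$ displayed in \eqref{hc:conv}: the convolutional form $h_\gamma = \frac{1}{2a\gamma}\, V \ast w$, where $w(x) := (x\wedge 0)\vee(x - 2\gamma\sqrt{a})$, is best suited to the monotonicity statement, while the explicit integral form is best suited to the two size estimates \eqref{hc1} and \eqref{hc2}.

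For the monotonicity claim I would observe that $w$ is continuous, non-decreasing, and Lipschitz, with weak derivative $w'(x) = \mathbf{1}_{\{x<0\}} + \mathbf{1}_{\{x > 2\gamma\sqrt{a}\}} \geq 0$. Since $V \in L^1(\R)$ by (V2) and $w$ is Lipschitz, differentiation under the integral sign is standard and yields $h_\gamma'(x) = \frac{1}{2a\gamma}(V\ast w')(x) \geq 0$, the non-negativity coming from $V\geq 0$ under (V1)--(V3) as recorded in the excerpt.

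For the uniform bound \eqref{hc1}, I would start from
$$|h_\gamma(x)|\leq \frac{1}{2a\gamma}\int_0^\infty\bigl[V(y+2\gamma\sqrt{a}-x)+V(y+x)\bigr]\, y\, dy,$$
and change variables $u=y+x$ in the second integral and $u=y+(2\gamma\sqrt{a}-x)$ in the first. For $x\in[0, 2\gamma\sqrt{a}]$ both shifts are non-negative, so after dropping the shift from the linear factor each integral is dominated by $\int_0^\infty V(u)\,u\,du$, which is finite by (V2). This gives \eqref{hc1} with $C = \frac{1}{a}\int_0^\infty V(u)\,u\,du$.

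For the sharper estimate \eqref{hc2}, the same changes of variable now produce integrals over $[x,\infty)$ and $[2\gamma\sqrt{a}-x,\infty)$; on the interior interval $[\sqrt{\gamma}, 2\gamma\sqrt{a}-\sqrt{\gamma}]$ both lower limits are at least $\sqrt{\gamma}$, so each integral is dominated by the common tail $\int_{\sqrt{\gamma}}^\infty V(u)\,u\,du$, which vanishes as $\gamma\to\infty$ by dominated convergence (since $u\,V(u)\in L^1(0,\infty)$), supplying the required $\sigma(\gamma)\to 0$. No step seems genuinely hard; the only point deserving a little care is the exchange of differentiation and integration in the monotonicity argument, which can alternatively be sidestepped by differentiating inside the integral in \eqref{hc:conv} and invoking $V' \leq 0$ on $(0,\infty)$ from (V1).
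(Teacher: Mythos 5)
Your proposal is correct and follows essentially the same route as the paper: the paper likewise gets monotonicity from $h_\gamma$ being the convolution of the non-negative $V$ with the non-decreasing function $(x\wedge 0)\vee(x-2\gamma\sqrt a)$, and both size estimates rest on the same shift of variables together with the finiteness of $\int_0^\infty V(u)\,u\,du$ and the vanishing of its tail $\int_{\sqrt\gamma}^\infty V(u)\,u\,du$. The only cosmetic difference is that the paper derives \eqref{hc1} and \eqref{hc2} by evaluating $h_\gamma$ at the points $0$, $\sqrt\gamma$, $2\gamma\sqrt a-\sqrt\gamma$, $2\gamma\sqrt a$ and then invoking monotonicity and the antisymmetry $h_\gamma(2\gamma\sqrt a-x)=-h_\gamma(x)$, whereas you bound $|h_\gamma(x)|$ pointwise via the triangle inequality; both arguments are valid and yield the same conclusions (up to the harmless factor of $2$ in your constant $C$).
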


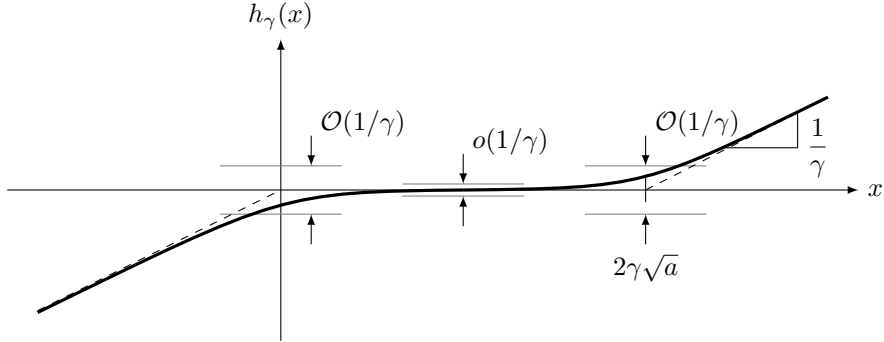
\begin{figure}[h!]
\centering
\begin{tikzpicture}[scale=0.8, >= latex]
\draw[->] (-4.5,0) -- (9.5,0) node[right] {$x$};
\draw[->] (0,-2.5) -- (0,2.5) node[above] {$h_\gamma(x)$};

\draw (6,0.2) -- (6,-0.2);

\draw[color=gray] (-1, 0.4) -- (1, 0.4);
\draw[color=gray] (-1, -0.4) -- (1, -0.4);
\draw[->] (0.5,0.9) -- (0.5,0.4);
\draw (0.5,1.1) node[right] {$\mathcal O (1/\gamma)$};
\draw[->] (0.5,-0.9) -- (0.5,-0.4);

\draw[color=gray] (2, 0.1) -- (4, 0.1);
\draw[color=gray] (2, -0.1) -- (4, -0.1);
\draw[->] (3,0.6) -- (3,0.1);
\draw (3,0.8) node[right] {$o (1/\gamma)$};
\draw[->] (3,-0.6) -- (3,-0.1);

\draw[color=gray] (5, 0.4) -- (7, 0.4);
\draw[color=gray] (5, -0.4) -- (7, -0.4);
\draw[->] (6,0.9) -- (6,0.4);
\draw (6,1.1) node[right] {$\mathcal O (1/\gamma)$};
\draw[->] (6,-0.9)  node[below] {$2 \gamma \sqrt{a}$} -- (6,-0.4);

\draw (7.25,0.7) -- (8.5,0.7) node[right] {$\dfrac1\gamma$} -- (8.5, 1.3);

\draw[dashed] (-4,-2) -- (0,0);
\draw[dashed] (6,0) -- (9,1.54);
\draw[very thick] (-4,-2.03) .. controls (0,-0.06) and (0,-0.03) .. (3,0);
\draw[very thick] (3,0) .. controls (6,0.03) and (6,0.06) .. (9,1.54); 
\end{tikzpicture}
\caption{Qualitative plot of $h_\gamma$.}\label{fig:plot:hc}
\end{figure}

\begin{proof}
The function $h_\gamma$ is non-decreasing, since it is the convolution of a non-negative function with a non-decreasing function. 

To prove \eqref{hc1} we note that by the monotonicity of $h_\gamma$, for all $x\in [0,2\gamma\sqrt a]$,
\begin{equation}\label{hcbounds}
-C \frac1\gamma \leq h_\gamma (0) \leq h_\gamma(x)\leq  h_\gamma (2 \gamma \sqrt a) \leq C \frac1\gamma,
\qquad\text{with } C = \frac1{2a} \int_0^\infty V(y) y\, dy.
\end{equation}
As for \eqref{hc2}, we estimate
\begin{align*}
h_\gamma (2 \gamma \sqrt a - \sqrt \gamma) 
= - h_\gamma(\sqrt \gamma) 
\leq \frac{1}{2a\gamma} \int_0^\infty V(y+\sqrt \gamma) y \, dy 
\leq \frac{1}{2a\gamma} \int_{\sqrt{\gamma}}^\infty V(z) z \, dz,
\end{align*}
which implies \eqref{hc2} by assumption (V2).
\medskip
\end{proof}

\bigskip

\textit{The `convolutional square root' of $V$.} In~\cite[Appendix]{GeersPeerlingsPeletierScardia13} it was shown that, for the special case of~\eqref{defV},
\[
\widehat V(\omega) 
= \int_\R V(x) e^{-2\pi ix\omega} \, dx 
= \frac1{2\omega \sinh(\pi^2\omega)}\left(\cosh(\pi^2\omega) - \frac{\pi^2\omega}{\sinh(\pi^2\omega)} \right).
\]
We observe that $\widehat V$ is even, strictly positive on $\R$, and $p := \sqrt{\widehat{V}} \in W^{2,\infty} (\R)$ is strictly positive. However, since $\widehat V$ decays to zero at infinity at rate $1/|\omega|$, the function $p(\omega)$ decays at rate $|\omega|^{-1/2}$ and fails to be in $L^2(\R)$. Therefore, the inverse Fourier transform of $p$ may not be a function $W$, and it is not clear whether there exists a function $W:\R\to\R$ such that $V = W*W$. 

In this paper we also want to consider any $V$ satisfying conditions (V1)--(V4). We therefore circumvent these difficulties by defining `convolution with $W$' as an operator. We leave the details of the construction of this operator to Appendix \ref{app:T}, and focus here on its key properties. We consider the operator 
\begin{equation}\label{operator:T:L2}
T: L^2 (\R) \to L^2 (\R), \qquad Tf := \mathcal F^{-1} \big( p\widehat f \big).
\end{equation}
Since $p^2 = \widehat V$, we obtain 
\begin{equation*} 
  T^2 f = \mathcal F^{-1} \big( p^2 \widehat f \big) = \mathcal F^{-1} \big( \widehat V \widehat f \big) = V \ast f,   
\end{equation*}
which motivates us to think of $T$ as the `convolutional square root' of $V$. 
However, in view of the expression of $F$ in Theorem \ref{intro:thm}, we need to extend the domain of $T$ beyond $L^2 (\R)$. It turns out that a natural space for this extension is the dual of the Hilbert space
\begin{equation} \label{for:defn:X}
  X := \Big\{ f \in H^1(\R) : x^2 f(x) \in L^2(\R) \Big\},
  \quad (f,\phi)_X := \int f' \phi' + \int f \phi (1 + x^4).
\end{equation}
The space $X$ is chosen to be as large as possible to require little regularity on $p$, but small enough such that $X'$ contains the domain of $F$. Lemma \ref{lem:T}.\eqref{lem:T:BLSO} and Lemma \ref{lem:T}.\eqref{lem:T:on:A} below quantify this statement. 

Furthermore, Lemma \ref{lem:T} lists the properties of $T$ and $X$ which we need in our proof of Theorem \ref{intro:thm}. In particular, Lemma \ref{lem:T}.\eqref{lem:T:Tnu2:equals:Vnunu} applies to $\nu_\gamma \in \mathcal A_\gamma$ for all $\gamma > 0$ (by choosing $f=\tilderhomin$). Consequently, the first term of $F_\gamma (\nu_\gamma)$ in \eqref{Fc} equals the first term of $F(\nu_\gamma)$ in \eqref{def:F}. The proof of Lemma \ref{lem:T}.\eqref{lem:T:on:A} is a corollary of Lemma \ref{lem:g:nu:bounds}. We prove the remaining statements of Lemma \ref{lem:T} in Appendix \ref{app:T}.

\begin{lem} \label{lem:T}
The linear operator $T$ defined by \eqref{operator:T:L2} and the space $X$ defined by \eqref{for:defn:X} satisfy the following properties:
\begin{enumerate}[(i)]
  \item \label{lem:T:BLSO} $T$ is a symmetric bounded linear operator from $L^2 (\R)$ to itself, from $X$ to itself, and from $X'$ to itself, where we define
  \begin{equation}\label{operator:T:on:Xp}
    T : X' \to X' 
    \quad \text{by} \quad
\langle T\xi, f \rangle := \langle \xi, Tf \rangle \text{;}
\end{equation}
  \item \label{lem:T:T2:equals:Vast} For all $\xi \in X'$ it holds that $T^2 \xi = V \ast \xi$;
  \item \label{lem:T:on:A} $\displaystyle \Big\{\nu \in \mathcal M([0, \infty))
	   : \nu (dx) \geq -\rhomin (0) dx, 
	     \: \sup_{x \geq 0} \nu^+ ([x, x+1]) < \infty 
	     \Big\} \subset X'$;
  \item \label{lem:T:Tnu2:equals:Vnunu} For all $\nu \in \mathcal M ([0,\infty))$ for which there exists $f \in H^1(\R)$ with $x f(x) \in L^2(\R)$ such that $\nu + f \geq 0$ is a finite measure, it holds that 
  \begin{equation*}
    \int_\R (T \nu)^2(x) dx = \int_0^\infty (V \ast \nu)(x) \nu (dx).
  \end{equation*}
\end{enumerate}
\end{lem}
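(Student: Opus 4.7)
The plan is to work in the Fourier domain throughout, where the square-root structure of $T$ becomes transparent. Write $p := \sqrt{\widehat V}$, which by (V3) is a non-negative element of $W^{2,\infty}(\R) \cap L^\infty(\R)$.

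For \eqref{lem:T:BLSO}, boundedness and symmetry of $T$ on $L^2(\R)$ follow immediately from Plancherel together with $p$ being real, even, and bounded. To extend to $X$, I would first describe $X$ on the Fourier side: $f \in X$ iff $\widehat f$, $\omega \widehat f$, and $(\widehat f)''$ all lie in $L^2(\R)$ (reflecting respectively $f \in L^2$, $f' \in L^2$, and $x^2 f \in L^2$). Multiplication by $p \in W^{2,\infty}$ then preserves all three properties via Leibniz's rule, giving boundedness of $T$ on $X$; the operator $T : X' \to X'$ defined by \eqref{operator:T:on:Xp} is the transpose of $T|_X$ and inherits boundedness. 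For \eqref{lem:T:T2:equals:Vast}, on $L^2$ it is the computation $\widehat{T^2 f} = p^2 \widehat f = \widehat V\, \widehat f = \widehat{V \ast f}$; the extension to $X'$ is by duality, using that convolution with $V$ maps $X$ boundedly into itself, which in turn follows from (V2) (finiteness of $\int V$ and of the first moment).

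Part \eqref{lem:T:on:A} is the content of the companion Lemma \ref{lem:g:nu:bounds}: for $\nu$ in the given set, $\nu^-$ has bounded density, while the uniform bound $\nu^+([x,x+1]) \leq M$ combined with the decay of any $f \in X$ forced by $x^2 f \in L^2$ plus the one-dimensional Sobolev embedding $H^1 \hookrightarrow L^\infty_{\mathrm{loc}}$, yields a uniform estimate $\bigl|\int f\, d\nu\bigr| \leq C \|f\|_X$, exhibiting $\nu$ as a bounded linear functional on $X$.

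Part \eqref{lem:T:Tnu2:equals:Vnunu} is the technical heart, and I expect it to be the main obstacle. Formally the identity reads
\[
\int_\R (T\nu)^2 \, dx = \langle T\nu, T\nu\rangle = \langle T^2 \nu, \nu\rangle = \langle V \ast \nu, \nu\rangle,
\]
but $\nu$ need not be a finite measure and $T\nu$ need not a priori lie in $L^2$. My strategy is approximation: write $\nu = \tau - f$ where, by hypothesis, $\tau := \nu + f$ is a finite non-negative measure, and mollify to $\nu_\varepsilon := (\tau - f) \ast \rho_\varepsilon$, a smooth function with $T\nu_\varepsilon \in L^2$. For each $\varepsilon>0$ the identity then follows directly from Plancherel:
\[
\int_\R (T\nu_\varepsilon)^2 \, dx = \int_\R p^2 |\widehat{\nu_\varepsilon}|^2 \, d\omega = \int_\R \widehat V\, |\widehat{\nu_\varepsilon}|^2 \, d\omega = \int_\R (V \ast \nu_\varepsilon)\, \nu_\varepsilon \, dx.
\]
Passing $\varepsilon \to 0$ uses the non-negativity $\widehat V \geq 0$ with Fatou or monotone convergence on the left, and dominated convergence on the right (with $V \in L^1$ and the non-negativity of $\tau$ absorbing the potentially singular contributions). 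Both sides may be $+\infty$ simultaneously, in which case the identity holds trivially. The delicate point, where I anticipate the real work, is making this passage clean, i.e., ensuring that the cross terms coming from the decomposition $\nu = \tau - f$ cancel under mollification and that the possibly singular self-interaction $\int (V \ast \tau)\, d\tau$ is captured consistently by the Fourier-side integral $\int \widehat V |\widehat\tau|^2$.
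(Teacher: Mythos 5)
Your treatment of parts \eqref{lem:T:BLSO}--\eqref{lem:T:on:A} follows the same route as the paper: the Fourier-side description of $X$ (the paper formalises this with spaces $X_{k,j}$ and the fact that $\mathcal F$ maps $X_{1,2}$ onto $X_{2,1}$), boundedness of multiplication by $p\in W^{2,\infty}$ via Leibniz, extension to $X'$ by transposition, and deduction of \eqref{lem:T:on:A} from Lemma \ref{lem:g:nu:bounds}. One small slip in \eqref{lem:T:T2:equals:Vast}: your claim that $V\ast\cdot$ maps $X$ boundedly into itself ``follows from (V2)'' is doubtful, since controlling the weight $x^2(V\ast f)(x)$ would require a \emph{second} moment of $V$, which is not assumed; the correct (and paper's) route is that $T^2=\mathcal F^{-1}(\widehat V\,\mathcal I)\mathcal F$ maps $X$ to $X$ because $\widehat V=p^2\in W^{2,\infty}$, so the duality extension goes through part \eqref{lem:T:BLSO} rather than through mapping properties of convolution with $V$.

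The genuine gap is in part \eqref{lem:T:Tnu2:equals:Vnunu}, precisely at the point you flag as ``the delicate point''. After reducing (as the paper also does) to the identity $\int(V\ast\mu)\mu=\int\widehat V|\widehat\mu|^2$ in $[0,\infty]$ for the finite non-negative measure $\mu=\nu+f$, your proposed limit passage does not work as stated. On the physical-space side there is no dominated convergence: $V$ has a singularity at the origin, so $\iint V(x-y)\,d(\mu_\varepsilon\otimes\mu_\varepsilon)$ has no $\mu\otimes\mu$-integrable dominating function near the diagonal, and the weak convergence $\mu_\varepsilon\otimes\mu_\varepsilon\weakto\mu\otimes\mu$ cannot be tested against the unbounded, discontinuous integrand $V(x-y)$; the non-negativity of $\mu$ does not ``absorb'' this. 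The paper resolves this by proving the two inequalities separately: for ``$\|T\mu\|^2\le\int(V\ast\mu)\mu$'' it uses the dual variational characterisation of the $H_{V_n}$-norms together with the approximations $V_n\uparrow V$ from hypothesis (V4) and the density of $TX_{1,1}$ in $X_{1,1}$ (which needs $p>0$); for the reverse inequality it mollifies with Gaussians, whose Fourier transforms satisfy $\widehat{G_\varepsilon}^{\,2}\uparrow 1$ monotonically, and again sends $V_n\uparrow V$ by monotone convergence. The fact that your argument never invokes (V4) — the hypothesis introduced exactly for this step — is the symptom of the missing idea: without the continuous, positive-definite minorants $V_k$ there is no way to compare the singular double integral with the Fourier-side quantity.
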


\bigskip

\emph{Domain $\mathcal A$ of the limit energy $F$.} We now give the complete definition of the limit functional~$F$: 
\begin{equation} \label{for:F}
  F : \mathcal A \to \R, \quad  F(\nu) = \frac12 \int_\R (T \nu)^2 (x) dx - \int_0^\infty g(x) \nu(dx),
  \end{equation}
where $g$ is given in~\eqref{eqn:def:g} and where the admissible class $\mathcal{A}$ is
 \begin{equation} \label{for:dom:F}
    \mathcal A 
    := \left\{ \nu \in \mathcal M([0, \infty)) : 
	     \nu (dx) \geq -\rhomin (0) dx, \quad
	     \sup_{x \geq 0} \nu^+ ([x, x+1]) < \infty, \quad
	     T \nu \in L^2(\R{})
	   \right\}.
  \end{equation}
As for $\mathcal A_\gamma$, we extend $\nu \in \mathcal A$ to $\R$ by zero whenever convenient. By `$T \nu \in L^2(\R{})$' we mean that the linear operator $T \nu : (X, \| \cdot \|_2) \to \R$ is bounded; Lemma \ref{lem:T}.\eqref{lem:T:BLSO},\eqref{lem:T:on:A} imply that this requirement is meaningful because of the lower and upper bounds on $\nu$ in $\mathcal A$.

While for finite $\gamma$ the set $\mathcal A_\gamma$ consists of measures with uniformly bounded total variation, in the limiting set $\mathcal A$ this bound has vanished, and \emph{a priori} it is unclear why a sequence $(\nu^\gamma) \subset \mathcal A_\gamma$ with bounded energy $F_\gamma (\nu^\gamma)$ could not acquire infinite total variation in the limit. Instead, our compactness result in Section \ref{subsection:main} implies $\nu \in \mathcal A$, which gives a \emph{local} bound on $|\nu|$, but no global bound. Maybe minimising sequences have bounded total variation, but we have not investigated this.

\subsection{Key estimates.} Lemma \ref{lem:cpness:est} states two estimates which are central to the proof of the compactness statement in Theorem~\ref{intro:thm}. Lemma \ref{lem:g:nu:bounds} shows how the local bounds of $\nu \in \mathcal A$ provide control over the linear term $\int g \nu$.

\begin{lem} \label{lem:cpness:est}
Let $\eta > 0$. For any bounded interval $[r, s] \subset [0, \infty)$ with $s - r \geq \eta$ there exist constants $C>0$ and  $C_\eta > 0$ (decreasing in $\eta$) such that for all $\gamma > 0$ and $\nu \in \mathcal A_\gamma$ the following estimates involving ${\lambda := T\nu}$ hold
\begin{align} \label{for:lem:cpness:est:nu}
  &\nu^+ ([r, s]) 
  \leq 
  C_\eta ( \sqrt{s-r} \| \lambda \|_{2}  + s-r ),
  \\ 
  \label{for:lem:cpness:est:lambda}
  &\int_{-\infty}^0 T\lambda = \int_{-\infty}^0 V\ast \nu
  \leq 
  C ( \| \lambda \|_{2} + 1 ).
\end{align}
\end{lem}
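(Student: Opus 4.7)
The plan is to obtain both estimates by testing $\nu$ against suitable non-negative functions and then exploiting the symmetry of $T$ together with the lower bound $\nu \geq -\tilderhomin$ from~\eqref{def:Ac}. For~\eqref{for:lem:cpness:est:nu}, the natural test function is $\chi := V \ast \mathbf{1}_{[r,s]}$. By evenness of $V$, for $x \in [r,s]$ we have $\chi(x) = \int_0^{x-r} V + \int_0^{s-x} V$, which makes $\chi$ visibly unimodal on $[r,s]$ with its minima at the endpoints, so
\[
  \chi(x) \geq \int_0^{s-r} V \geq \int_0^\eta V =: c_\eta > 0 \qquad \text{on } [r,s].
\]
Here $c_\eta > 0$ because $V \equiv 0$ on $(0,\eta)$ would, by (V1), force $V \equiv 0$ on $(0,\infty)$ and contradict $\widehat V > 0$. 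Combining this with $\chi \geq 0$ elsewhere yields $c_\eta\, \nu^+([r,s]) \leq \int \chi\, d\nu^+ = \int \chi\, d\nu + \int \chi\, d\nu^-$. The $\nu^-$-contribution is controlled via $\nu^- \leq \tilderhomin \leq 1/\sqrt a$ (see~\eqref{def:rhominstar}) together with $\int \chi = \|V\|_1 (s-r) = 2a(s-r)$, which gives $\int \chi\, d\nu^- \leq 2\sqrt a\, (s-r)$.

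The term $\int \chi\, d\nu$ is handled via the operator $T$. We may assume $\lambda \in L^2(\R)$, for otherwise~\eqref{for:lem:cpness:est:nu} is trivial. Fubini gives $\int \chi\, d\nu = \int_{[r,s]} (V \ast \nu)(x)\, dx$; by Lemma~\ref{lem:T}.\eqref{lem:T:T2:equals:Vast}, $V \ast \nu = T^2 \nu = T\lambda$ in $X'$, and this lies in $L^2(\R)$ because $T : L^2 \to L^2$ is bounded by Lemma~\ref{lem:T}.\eqref{lem:T:BLSO}. Cauchy--Schwarz then gives
\[
  \Bigl| \int \chi\, d\nu \Bigr|
  = \Bigl| \int_{[r,s]} T\lambda\, dx \Bigr|
  \leq \sqrt{s-r}\, \|T\|_{L^2 \to L^2}\, \|\lambda\|_2.
\]
Dividing the combined bound by $c_\eta$ yields~\eqref{for:lem:cpness:est:nu} with a constant proportional to $1/c_\eta$, hence decreasing in $\eta$.

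Estimate~\eqref{for:lem:cpness:est:lambda} will be deduced from~\eqref{for:lem:cpness:est:nu} applied on unit intervals. First, Fubini and the evenness of $V$ give
\[
  \int_{-\infty}^0 (V\ast \nu)(x)\, dx
  = \int_0^\infty \Bigl( \int_y^\infty V \Bigr)\, \nu(dy)
  = \sqrt{a} \int_0^\infty g\, d\nu,
\]
with $g$ as in~\eqref{eqn:def:g}. The bound $\int_0^\infty g\, d\nu^- \leq (1/\sqrt a) \int_0^\infty g$ is finite by (V2). For the positive part, decomposing $[0,\infty) = \bigcup_{k \geq 0}[k,k+1]$ and using monotonicity of $g$ gives
\[
  \int_0^\infty g\, d\nu^+
  \leq \sum_{k=0}^\infty g(k)\, \nu^+([k,k+1])
  \leq C(\|\lambda\|_2 + 1)\sum_{k=0}^\infty g(k),
\]
where the second inequality applies~\eqref{for:lem:cpness:est:nu} with $\eta = 1$ on each $[k,k+1]$. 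Since $\sum_k g(k) \leq g(0) + \int_0^\infty g < \infty$ by (V2), this closes the argument.

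The main obstacle I expect is the careful identification of $V \ast \nu$ with the $L^2$ function $T\lambda$: since $\nu \notin L^2$ in general, the relation only has meaning in $X'$ via the chain $V \ast \nu = T(T\nu) = T\lambda$, and it is upgraded to $L^2$ only after reducing to $\lambda \in L^2$ and invoking the $L^2$-boundedness of $T$. Once this identification is in place, the remaining steps are routine Fubini and Cauchy--Schwarz applications.
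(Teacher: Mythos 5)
Your proposal is correct and follows essentially the same route as the paper: for \eqref{for:lem:cpness:est:nu} your test function $\chi=V\ast\mathbf 1_{[r,s]}$ is exactly the paper's inner integral $\int_r^s V(x-y)\,dx$, and the decomposition into $\int_{[r,s]}V\ast\nu$ plus the $\nu^-$-contribution, the identification $V\ast\nu=T^2\nu=T\lambda$ with the $L^2$-boundedness of $T$, and Cauchy--Schwarz are all the same steps. For \eqref{for:lem:cpness:est:lambda} you merely perform the paper's Fubini in the opposite order (passing through $g$ and using its monotonicity on unit intervals rather than comparing $V(x-y)$ with $V(x-k)$ directly), which is an equivalent computation.
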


\begin{proof} It is not restrictive to prove the claim only for $\nu$ with $T\nu\in L^2(\R)$, the bounds \eqref{for:lem:cpness:est:nu} and \eqref{for:lem:cpness:est:lambda} being trivial otherwise.

Since $V$ is positive, we obtain
\begin{align*}
\int_r^s V \ast \nu^{+} 
= 
\int_r^s \int_0^\infty V(x-y) \nu^{+}(dy) dx 
\geq 
\int_r^s \left( \int_r^s V(x-y) dx \right) \nu^{+}(dy) 
\geq 
\tilde C_\eta\,\nu^+ ([r,s]),
\end{align*}
where $\tilde C_\eta$ is a strictly positive constant increasing in $\eta$ and converging to $0$ as $\eta \searrow 0$. Since we assume that $T\nu\in L^2(\R)$, it follows from Lemma \ref{lem:T}.\eqref{lem:T:BLSO} that $T^2\nu\in L^2(\R)$ and $\|T^2\nu\|_2 \leq C\|T\nu\|_2$. We then get the desired upper bound \eqref{for:lem:cpness:est:nu} from Lemma \ref{lem:T}.\eqref{lem:T:T2:equals:Vast} by
\begin{align*} 
\tilde C_{\eta} \nu^+ ([r,s])
&\leq 
\int_r^s V \ast \nu^{+} 
=
\int_r^s V \ast \nu + \int_r^s V \ast \nu^- 
\\
& = \int_r^s T^2\nu  + \int_r^s \int_0^\infty V(x-y)\nu^-(dy) \, dx \\
&\leq 
\sqrt{s-r}\, \| T^2\nu\|_2 + (s-r) \rhomin (0) \| V \|_1 \\
& \leq   C\sqrt{s-r}  \|T\nu \|_2 + (s-r) \rhomin (0) \| V \|_1. 
\end{align*}

For proving the second estimate \eqref{for:lem:cpness:est:lambda}, we use \eqref{for:lem:cpness:est:nu} with $s - r = \eta = 1$. Since $V$ is positive, even and increasing in $(-\infty,0)$, we have the estimate
\begin{align*} 
\int_{-\infty}^0 V \ast \nu
&\leq  
\int_{-\infty}^0 \int_0^\infty V(x-y) \nu^+ (dy) dx
\\
&\leq
\int_{-\infty}^0 \sum_{k=0}^\infty V(x - k) \nu^+ ([k, k+1]) dx
\\ 
&\leq
C ( \| T\nu \|_2 + 1 ) \sum_{k=0}^\infty \int_k^\infty V,
\end{align*}
where the sum in the right-hand side is finite due to assumption (V2).
\end{proof}

\begin{lem} \label{lem:g:nu:bounds}
There exists a constant $C > 0$ such that for all $\nu \in \mathcal M ([0,\infty))$ it holds that
\begin{align}
  \bigg| \int_0^\infty g \nu \, \bigg| 
  &\leq C \sup_{x \geq 0} | \nu | ([x, x+1]),
  \quad \text{where $g$ is as in \eqref{eqn:def:g}, and}  \label{for:lem:g:nu:bounds}\\\label{for:lem:g:nu:bounds:f}
  \bigg| \int_0^\infty f \nu \, \bigg| 
  &\leq C \|f\|_X \sup_{x \geq 0} | \nu | ([x, x+1])
  \quad \text{for all } f \in X.
\end{align}
\end{lem}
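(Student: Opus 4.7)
The plan is to decompose $[0,\infty) = \bigcup_{k \geq 0}[k,k+1]$ and combine local estimates on $g$ or $f$ on each unit interval with the uniform bound $M := \sup_{x \geq 0}|\nu|([x,x+1])$.

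For the first inequality I use that $g$ is non-negative and non-increasing (immediate from (V1)--(V2)), which yields $\big|\int_0^\infty g\,d\nu\big| \leq M \sum_{k\geq 0} g(k)$, and reduce to showing $\sum_k g(k) < \infty$. Fubini gives
\[
\sum_{k \geq 0} g(k) = \frac{1}{\sqrt a}\int_0^\infty (\lfloor y \rfloor + 1) V(y)\,dy \leq \frac{1}{\sqrt a}\big(\|V\|_{L^1} + \|x V\|_{L^1(0,\infty)}\big),
\]
finite by (V2).

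For the second inequality the analogous estimate $\big|\int_0^\infty f\,d\nu\big| \leq M\sum_k \|f\|_{L^\infty([k,k+1])}$ reduces the claim to the bound $\sum_k \|f\|_{L^\infty([k,k+1])} \leq C\|f\|_X$. I plan to prove this by first establishing the weighted version $\sum_k (1+k^2)\|f\|_{L^\infty([k,k+1])}^2 \leq C\|f\|_X^2$ and then applying Cauchy--Schwarz with the summable weight $(1+k^2)^{-1}$. The weighted bound follows from the unit-interval Sobolev-type estimate
\[
\|f\|_{L^\infty([k,k+1])}^2 \leq \|f\|_{L^2([k,k+1])}^2 + 2\|f\|_{L^2([k,k+1])}\|f'\|_{L^2([k,k+1])},
\]
which is obtained by averaging $f(x)^2 = f(y)^2 + 2\int_y^x ff'$ over $y \in [k,k+1]$. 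After multiplication by $(1+k^2)$ and summation, the first summand is bounded by $\int (1+x^2)f^2 \leq C\|f\|_X^2$; the cross term is handled by Cauchy--Schwarz using $\sum_k (1+k^2)^2\|f\|_{L^2([k,k+1])}^2 \leq C\int(1+x^4)f^2 \leq C\|f\|_X^2$ (since $(1+k^2)^2 \leq C(1+x^4)$ on $[k,k+1]$) together with $\sum_k \|f'\|_{L^2([k,k+1])}^2 \leq \|f'\|_2^2 \leq \|f\|_X^2$.

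The main bookkeeping lies in tracking how the weight $(1+k^2)$ distributes between the $L^2(1+x^4)$ part and the $\|f'\|_2$ part of $\|f\|_X$ in the cross term; no essential obstacle arises.
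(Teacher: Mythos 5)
Your proposal is correct and follows essentially the same route as the paper: decompose $[0,\infty)$ into unit intervals, bound $\sum_k g(k)$ via the first-moment condition (V2), and for $f\in X$ use the unit-interval Sobolev estimate $\|f\|_{L^\infty(I_k)}^2\leq \|f\|_{L^2(I_k)}^2+2\|f\|_{L^2(I_k)}\|f'\|_{L^2(I_k)}$ combined with the polynomial weight in $\|\cdot\|_X$ and Cauchy--Schwarz over $k$. The only (immaterial) differences are bookkeeping: the paper absorbs the weight as $b_k/k^2$ with Young's inequality and treats $k=0$ separately, whereas you carry the weight $(1+k^2)$ uniformly and split the cross term by a second Cauchy--Schwarz over $k$.
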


\begin{proof}
Let $\varphi \in \{g\} \cup X$. We set $I_k := (k, k+1)$ for $k \in \N$, and estimate
\begin{align}\label{l:intermediate}
\bigg| \int_{0}^{\infty} \varphi \nu \, \bigg|
\leq \sum_{k=0}^{\infty} \int_{k}^{k+1} |\varphi(x)| \, |\nu|(dx) 
\leq \Big( \sup_{x \geq 0} | \nu | ([x, x+1]) \Big) \sum_{k=0}^{\infty} \| \varphi \|_{L^\infty (I_k)}.
\end{align}
We estimate the right-hand side of \eqref{l:intermediate} for $\varphi = g$ and $\varphi \in X$ separately. Since $g \in L^1(0,\infty)$ is bounded and non-increasing, we obtain
\begin{equation*}
  \sum_{k = 0}^\infty \| g \|_{L^\infty (I_k)} 
  \leq g(0) + \int_0^\infty g
  < \infty,
\end{equation*}
hence \eqref{for:lem:g:nu:bounds}. Next, to prove \eqref{for:lem:g:nu:bounds:f}, we set $\varphi = f \in X$, and fix $k \in \N$. By continuity of $f$, there exists $x_k \in I_k$ at which $f^2(x_k) =\int_{I_k}f^2$. Therefore, by the Fundamental Theorem of Calculus, we obtain
\begin{equation*} 
  f^2 (y) = f^2(x_k) + \int_{x_k}^y (f^2(x))'dx
  = \int_{I_k}f^2 + 2 \int_{x_k}^y f f'
  \qquad \text{for all } y \in I_k.
\end{equation*}
Then, by using the Cauchy-Schwarz and Young inequalities, we estimate, for $k \geq 1$,
\begin{align} \notag
  \| f \|_{L^\infty (I_k)}^2
  &\leq \int_{I_k}f^2 + 2 \| f \|_{L^2 (I_k)} \| f' \|_{L^2 (I_k)} \leq  \frac{1}{k^4}\|x^2f\|^2_{L^2(I_k)} + \frac{2}{k^2} \|x^2 f \|_{L^2 (I_k)} \| f' \|_{L^2 (I_k)} \\\label{for:app:T:1}
  &\leq  \frac C{k^2} \left( \| x^2 f \|_{L^2 (I_k)}^2 + \| f' \|_{L^2 (I_k)}^2 \right)
  =: \frac{b_k}{k^2}.
\end{align} 
Note that $f \in X$ implies $(b_k)_k \subset \ell^1(\N{})$. We obtain the desired estimate, by using \eqref{for:app:T:1} and the Cauchy-Schwarz inequality on sequences, from
\begin{equation*} 
  \sum_{k=1}^\infty \|f \|_{L^\infty (I_k)}
  \leq \sum_{k=1}^\infty \frac1k \sqrt{b_k}
  \leq \left[ \sum_{k=1}^\infty \frac1{k^2} \right]^{\frac12} \left[ \sum_{k=1}^\infty b_k \right]^{\frac12} 
  \leq C \| f \|_{X},
\end{equation*} 
and from $\|f \|_{L^\infty (I_0)} \leq C \| f \|_X$, by the first inequality in \eqref{for:app:T:1}.
\end{proof}

\subsection{Main result: $\Gamma$--convergence}\label{subsection:main}

The following theorem is the main result in this paper. 

\begin{thm} (First-order $\Gamma$--convergence).\label{thm:first:order:Gamma:conv}
Any sequence $(\nu_\gamma) \subset \mathcal A_\gamma$ with bounded energies $F_\gamma (\nu_\gamma)$ is pre-compact in the vague topology. Furthermore, $F_\gamma$ $\Gamma$-converges to $F$ as $\gamma\to \infty$ with respect to the vague topology, i.e.,
\begin{subequations}
\label{for:thm:first:order:Gconv}
\begin{alignat}2
\label{for:thm:first:order:Gconv:liminf}
&\text{for all } \nu_\gamma \weakto \nu \text{ vaguely,} & \liminf_{\gamma\to\infty}  F_\gamma(\nu_\gamma) &\geq F(\nu), \\
&\text{for all }\nu \in \mathcal A\text{ there exists }\nu_\gamma\weakto \nu \text{ vaguely such that}\quad & \limsup_{\gamma\to\infty} F_\gamma(\nu_\gamma) &\leq F(\nu).
\label{for:thm:first:order:Gconv:limsup}
\end{alignat}
\end{subequations}
\end{thm}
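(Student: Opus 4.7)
The plan is to prove the three items---compactness, liminf, and limsup---using the decomposition
\[
F_\gamma(\nu_\gamma) = \tfrac12 \|T\nu_\gamma\|_2^2 - \int_0^\infty g\, \nu_\gamma + \int_0^\infty h_\gamma\, \nu_\gamma,
\]
which is valid by Lemma~\ref{lem:T}\eqref{lem:T:Tnu2:equals:Vnunu} (applied with $f = \tilderhomin$), together with the estimates in Lemmas~\ref{lem:hc}, \ref{lem:cpness:est} and \ref{lem:g:nu:bounds}. For compactness, from $F_\gamma(\nu_\gamma) \leq C$ one first controls the lower-order terms: Lemma~\ref{lem:g:nu:bounds} combined with Lemma~\ref{lem:cpness:est} at $s-r=1$ and the admissibility bound $\nu_\gamma^- \leq \tilderhomin \leq 1/\sqrt a$ gives $|\int g\, \nu_\gamma| \leq C(\|T\nu_\gamma\|_2 + 1)$. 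Splitting $\int h_\gamma\, \nu_\gamma$ at $x=2\gamma\sqrt a$, the left piece is $O(1)$ via $|h_\gamma|\leq C/\gamma$ (Lemma~\ref{lem:hc}) together with $|\nu_\gamma|([0,\infty)) \leq 2\gamma$, while on the right piece both $h_\gamma$ and $\nu_\gamma$ are non-negative and can be dropped from the upper estimate. This yields $\tfrac12 \|T\nu_\gamma\|_2^2 \leq C + C\|T\nu_\gamma\|_2$, so $\|T\nu_\gamma\|_2$ is uniformly bounded; Lemma~\ref{lem:cpness:est} then delivers a uniform local bound on $|\nu_\gamma|$ and hence a vaguely convergent subsequence $\nu_\gamma \weakto \nu$. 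Membership $\nu \in \mathcal A$ follows from passing the lower bound to the limit (using $\tilderhomin \to \rhomin(0)$ pointwise), preservation of the local mass bound, and the identification $T\nu \in L^2$ established in the liminf step.

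\textbf{Liminf inequality.} On a bounded-energy subsequence, up to extraction $T\nu_\gamma \weakto \xi$ in $L^2(\R)$, and the crucial step is to identify $\xi = T\nu$. For $\phi \in C_c^\infty(\R) \subset X$, symmetry of $T$ gives $\int (T\nu_\gamma)\phi = \int T\phi\, \nu_\gamma$; since $T\phi \in X$ by Lemma~\ref{lem:T}\eqref{lem:T:BLSO}, the summability estimate $\sum_k \|T\phi\|_{L^\infty(I_k)} \leq C\|T\phi\|_X$ from the proof of Lemma~\ref{lem:g:nu:bounds} permits uniform-in-$\gamma$ truncation of the integral at some large $M$; vague convergence on $[0,M]$ then yields $\int T\phi\, \nu_\gamma \to \int T\phi\, \nu$, so $\xi = T\nu$ and lower semicontinuity of $\|\cdot\|_2$ gives $\liminf \tfrac12\|T\nu_\gamma\|_2^2 \geq \tfrac12\|T\nu\|_2^2$. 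The same truncation strategy applied to $g \in L^1 \cap C_b$ yields $\int g\, \nu_\gamma \to \int g\, \nu$. For the $h_\gamma$ term I split at $\sqrt\gamma$ and $2\gamma\sqrt a - \sqrt\gamma$: the bulk contributes $O(\sigma(\gamma))$ via $|h_\gamma|\leq \sigma(\gamma)/\gamma$ against mass $\leq 2\gamma$; the two $O(\sqrt\gamma)$-wide boundary strips contribute $O(1/\sqrt\gamma)$ via $|h_\gamma|\leq C/\gamma$ against local mass $O(\sqrt\gamma)$; and $[2\gamma\sqrt a, \infty)$ contributes non-negatively. Hence $\liminf \int h_\gamma\, \nu_\gamma \geq 0$, completing the liminf inequality.

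\textbf{Limsup (recovery).} A diagonal argument reduces to $\nu \in \mathcal A$ supported in some $[0,R]$: the truncations $\nu^R := \nu \llcorner [0,R]$ satisfy $\int g\, \nu^R \to \int g\, \nu$ by dominated convergence (Lemma~\ref{lem:g:nu:bounds}), and $\|T\nu^R\|_2 \to \|T\nu\|_2$ via the positive-definite bilinear form structure $\|T\mu\|_2^2 = \int (V\ast \mu)\, \mu$ combined with assumption (V4). For compactly supported $\nu$, I set
\[
\nu_\gamma := (1 - \epsilon_\gamma) \nu + c_\gamma \chi_{[M_\gamma, M_\gamma + L_\gamma]}(x)\, dx,
\]
with $\epsilon_\gamma := R/(2\gamma\sqrt a) \to 0$ (ensuring $(1-\epsilon_\gamma)\nu \geq -\tilderhomin$ on $[0,R]$), $M_\gamma = \gamma$ and $L_\gamma = \sqrt\gamma \to \infty$ (placing the corrector where $\tilderhomin$ and $h_\gamma$ are controlled), and $c_\gamma := -(1-\epsilon_\gamma)\nu([0,\infty))/L_\gamma \to 0$ enforcing $\int_0^\infty \nu_\gamma = 0$. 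Then $\nu_\gamma \in \mathcal A_\gamma$ for $\gamma$ large, and $\nu_\gamma \weakto \nu$ vaguely because the corrector escapes every compact set. The quadratic term converges to $\tfrac12\|T\nu\|_2^2$ using boundedness of $T:L^2\to L^2$, the bound $\|T\chi_{[0,L_\gamma]}\|_2^2 \leq CL_\gamma$ with $c_\gamma^2 L_\gamma \to 0$, and a Cauchy--Schwarz cross-term; the linear corrector contributes at most $|\nu([0,\infty))|\,g(M_\gamma) \to 0$; and $\int h_\gamma\, \nu_\gamma \to 0$ by $|h_\gamma|\leq C/\gamma$ on the support and bounded mass.

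\textbf{Main obstacle.} I expect the hardest step to be the identification $T\nu_\gamma \weakto T\nu$ in $L^2$ inside the liminf argument: vague convergence of $\nu_\gamma$ does not directly pair against $T\phi$ even for $\phi \in C_c^\infty$ because $T\phi$ is not compactly supported. The resolution hinges on the regularity and decay of $T\phi$ encoded by Lemma~\ref{lem:T}\eqref{lem:T:BLSO} (via $p=\sqrt{\widehat V} \in W^{2,\infty}$ and the definition of $X$) together with the locally-uniform mass bounds on $\nu_\gamma$ from Lemmas~\ref{lem:cpness:est} and \ref{lem:g:nu:bounds}, which allow reduction to a compact interval $[0,M]$ where vague convergence applies directly. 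The same mechanism also yields $T\nu \in L^2$, confirming $\nu \in \mathcal A$.
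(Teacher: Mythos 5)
Your compactness and liminf arguments follow the paper's route essentially verbatim (same decomposition $F_\gamma = F + \int h_\gamma\nu_\gamma$, same use of Lemmas~\ref{lem:hc}, \ref{lem:cpness:est}, \ref{lem:g:nu:bounds}, same splitting of $[0,2\gamma\sqrt a]$ at $\sqrt\gamma$ and $2\gamma\sqrt a - \sqrt\gamma$), and they are sound. Your recovery sequence is genuinely different: the paper restores the zero-mass constraint by subtracting $\sigma_\gamma\tilderhomin$, spreading the correction over the whole pile-up, whereas you localise it in a flat bump of width $L_\gamma=\sqrt\gamma$ escaping to infinity. Your construction works and has the pleasant feature that it does not require $\nu\in L^2$, only compact support; the one defect is the placement $M_\gamma=\gamma$, which for general $V$ with $a\le 1/4$ lies outside $[0,2\gamma\sqrt a]$, where $\tilderhomin=0$ (so admissibility fails when $c_\gamma<0$) and where $h_\gamma$ is no longer $O(1/\gamma)$ but grows linearly. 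Choosing, say, $M_\gamma=\gamma\sqrt a$ fixes this.

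The genuine gap is the reduction to compactly supported $\nu$. You assert that for $\nu^R:=\nu\llcorner[0,R]$ one has $\|T\nu^R\|_2\to\|T\nu\|_2$ ``via the positive-definite bilinear form structure combined with (V4)'', but this is precisely the hard point and no argument is given. Two things go wrong. First, it is not clear that $\nu^R\in\mathcal A$ at all: membership requires $T\nu^R\in L^2$, i.e.\ $\int\widehat V\,|\widehat{\nu^R}|^2<\infty$, and since $\widehat V\notin L^1(\R)$ this does not follow from $\nu^R$ being a finite measure; restriction in physical space convolves $\widehat\nu$ with the slowly decaying $\widehat{\chi_{[0,R]}}$ and can destroy the decay that made $\int\widehat V|\widehat\nu|^2$ finite. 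Second, even granting $\nu^R\in\mathcal A$, the expansion $\|T\nu\|_2^2 = \|T\nu^R\|_2^2 + 2\int(V\ast\nu^R)\,\nu\llcorner(R,\infty) + \|T(\nu\llcorner(R,\infty))\|_2^2$ has a cross term which, for $\nu$ of unbounded total variation (which $\mathcal A$ permits), is only \emph{bounded} uniformly in $R$ via (V2) and the local mass bounds---not $o(1)$. This is exactly why the paper's limsup proof proceeds in the opposite order: it first mollifies with a kernel $\psi_\varepsilon$ whose Fourier transform is non-negative, $\le 1$, and compactly supported (Appendix~\ref{app:Step:1}), so that $\widehat{\tilde\nu_\varepsilon}=\widehat{\psi_\varepsilon}\widehat\nu$ immediately gives $\|T\tilde\nu_\varepsilon\|_2\le\|T\nu\|_2$ and lands in $L^2(0,\infty)$, and only \emph{then} truncates, where $L^2$-continuity of $F$ (via boundedness of $T$ on $L^2$) makes the truncation harmless. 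You need either to import that two-stage approximation or to supply a genuine proof that direct truncation preserves $\mathcal A$ and the energy; as written, the reduction is unproved.
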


\begin{proof} 
\textbf{Compactness.}  
Let $(\nu_\gamma)\subset \mathcal{A}_\gamma$ be such that $F_\gamma (\nu_\gamma) \leq C$. We split the proof into several steps.

\medskip

\textit{Step 1. $F(\nu_\gamma)$ is bounded.} We claim that $\int_0^\infty h_\gamma \nu_\gamma$ is bounded from below. This bound implies, together with Lemma \ref{lem:T}.\eqref{lem:T:Tnu2:equals:Vnunu} applied to $(\nu_\gamma)$, that
\begin{equation}\label{diff:en:nuc}
F (\nu_\gamma) 
= F_\gamma (\nu_\gamma) - \int_0^\infty  h_\gamma \nu_\gamma 
\leq C.
\end{equation}
From estimate \eqref{diff:en:nuc} we conclude in particular that finiteness of $F_\gamma (\nu_\gamma)$ implies finiteness of $F (\nu_\gamma)$. 

We now prove the claim. Since $h_\gamma$ is non-decreasing (Lemma \ref{lem:hc}) and by~\eqref{prop:supp-nu-} $\supp \nu_\gamma^-\subseteq~[0,2\gamma\sqrt{a}]$, we deduce the bound
\begin{equation*} 
  \int_0^\infty h_\gamma \nu_\gamma
  = 
  \int_0^\infty h_\gamma \nu_\gamma^+ - \int_0^{2\gamma\sqrt{a}} h_\gamma \nu_\gamma^-
  \geq 
 h_\gamma (0) \int_0^\infty \nu_\gamma^+ - h_\gamma (2 \gamma \sqrt a) \int_0^\infty \nu_\gamma^-
  \geq 
  -C,
\end{equation*}
where the last inequality follows from \eqref{def:Ac} and \eqref{hcbounds}.

\medskip

\textit{Step 2. Convergence of $\nu_\gamma$ and $T\nu_\gamma$.} From Step 1 we have 
\begin{equation}\label{en:g}
C\geq F(\nu_\gamma) =  \frac12 \| \lambda_\gamma \|_2^2 - \int_0^\infty g \nu_\gamma,
\end{equation}
where $\lambda_\gamma:=T\nu_\gamma$. We rewrite the second integral in the right-hand side of \eqref{en:g} in terms of $\lambda_\gamma$ by 
\begin{align*}
\int_0^\infty g\nu_\gamma 
&= \frac{1}{\sqrt a}\int_0^\infty \left(\int_x^\infty V(y) dy \right) \nu_\gamma(dx)
= \frac{1}{\sqrt a}\int_0^\infty \left(\int_0^\infty V(z+x) dz \right) \nu_\gamma(dx)\\
&= \frac{1}{\sqrt a}\int_{\R} \left(\int_{-\infty}^0 V(z-x) dz \right) \nu_\gamma(dx) = \frac{1}{\sqrt a}\int_{-\infty}^0 V\ast \nu_\gamma
= \frac{1}{\sqrt a}\int_{-\infty}^0T\lambda_\gamma,
\end{align*}
where exchanging the order of integration is justified by $V \in L^1 (\R)$ and $\| \nu_\gamma \|_{TV} \leq \gamma$. In conclusion, from \eqref{en:g} and Lemma \ref{lem:cpness:est} we obtain 
\begin{equation*} 
C\geq F(\nu_\gamma) = \frac12 \| \lambda_\gamma \|_2^2  -
\frac{1}{\sqrt{a}}\int_{-\infty}^0 T \lambda_\gamma \geq \frac12 \| \lambda_\gamma \|_2^2  - C ( \| \lambda_\gamma \|_2 + 1 );
\end{equation*}
hence $(\lambda_\gamma)$ is bounded in $L^2( \R )$. Consequently, by \eqref{for:lem:cpness:est:nu}, $\nu_\gamma ([r, s])$ is uniformly bounded in $\gamma$ for any bounded interval $[r, s] \subset [0, \infty)$, with a bound that only depends on $s-r$. Therefore, along a subsequence,
\begin{equation} \label{for:cpness:result}
  \nu_\gamma \weakto \nu \text{ vaguely,} 
  \qquad 
  \text{and } \lambda_\gamma = T\nu_\gamma \weakto \lambda \text{ in } L^2.
\end{equation}

\medskip

\textit{Step 3. Characterisation of the limits $\nu$ and $\lambda$.}
By the basic properties of the vague topology, $\nu \in \mathcal M ([0,\infty))$ satisfies $\nu (dx) \geq - \rhomin (0) dx$ and $\sup_{x \geq 0} \nu^+ ([x, x+1]) < \infty$. 
Consequently, by Lemma \ref{lem:T}.\eqref{lem:T:on:A}, $\nu \in X'$. We claim that $\nu_\gamma$ converges weakly-\textasteriskcentered\ in $X'$ to $\nu$. If so, the continuity of $T$ (Lemma~\ref{lem:T}.\eqref{lem:T:BLSO}) and \eqref{for:cpness:result} imply that $\lambda = T\nu \in L^2(\R{})$, and thus $\nu \in \mathcal A$.

To prove this claim, we take $f \in X$ arbitrary and a large $M > 0$ fixed and write 
\begin{equation*}
\int_0^\infty f \nu_\gamma 
=
\int_0^M f \nu_\gamma + \int_M^\infty f \nu_\gamma.
\end{equation*}
The first term in the right-hand side converges to $\int_0^M f \nu$ as $\gamma\to \infty$ because of the vague convergence of $\nu_\gamma$ to $\nu$. We now show that the second term is infinitesimal for large $M$. Indeed, by Lemma~\ref{lem:g:nu:bounds} and Lemma~\ref{lem:cpness:est}, 
\begin{align}
\int_M^\infty f \nu_\gamma &= \int_0^\infty (f\chi_{(M,\infty)}) \nu_\gamma \leq C\|f\chi_{(M,\infty)}\|_X \sup_{x\geq 0} |\nu_\gamma|([x,x+1])\nonumber\\
&= C\|f\chi_{(M,\infty)}\|_X \sup_{x\geq 0} (\nu^+_\gamma+\nu^-_\gamma)([x,x+1])\nonumber\\
&\leq C\|f\chi_{(M,\infty)}\|_X \big(\|\lambda_\gamma\|_2 + 1 + \rho^\ast(0)\big),\label{est:tail:X}
\end{align}
where $\chi_{(M,\infty)}$ denotes the characteristic function of the interval $(M,\infty)$. Clearly the term in \eqref{est:tail:X} is infinitesimal for large $M$, as claimed.

\bigskip

\textbf{Lower bound.} To prove \eqref{for:thm:first:order:Gconv:liminf}, we observe from \eqref{diff:en:nuc} that it is sufficient to show that
\begin{equation} \label{for:pf:liminf:ineqs}
\liminf_{\gamma \rightarrow \infty} \int_0^\infty h_\gamma \nu_\gamma \geq 0, 
\quad \textrm{and} \quad
\liminf_{\gamma \rightarrow \infty} F (\nu_\gamma) \geq F(\nu).
\end{equation}
We start with the first inequality in \eqref{for:pf:liminf:ineqs}. First of all, if we write $h_\gamma=h_\gamma^+-h_\gamma^-$ and $\nu_\gamma=\nu_\gamma^+-\nu_\gamma^-$, we have 
\begin{align*}
\int_0^\infty h_\gamma \nu_\gamma \geq -\int_0^\infty h_\gamma^+ \nu_\gamma^- -\int_0^\infty h_\gamma^- \nu_\gamma^+,
\end{align*}
the other terms being non-negative. Therefore the first inequality in \eqref{for:pf:liminf:ineqs} follows if we prove that 
\begin{equation}\label{small-claim}
\limsup_{\gamma\to\infty} \int_0^\infty h_\gamma^+\nu_\gamma^- = 0, \quad \textrm{and} \quad \limsup_{\gamma\to\infty} \int_0^\infty h_\gamma^-\nu_\gamma^+ = 0.
\end{equation} 
We note that, since $\supp \nu_\gamma^-\subseteq [0,2\gamma\sqrt{a}]$ and $\nu_\gamma^- (dx) \leq \rhomin(0) dx$, 
\begin{align*}
\int_0^\infty h_\gamma^+\nu_\gamma^- = \int_0^{2\gamma\sqrt{a}} h_\gamma^+\nu_\gamma^- \leq \rhomin(0) \int_0^{2\gamma\sqrt{a}} h_\gamma^+.
\end{align*}
The bounds \eqref{hc1} and \eqref{hc2} on $h_\gamma$ entail
\begin{align}\label{small-claim1}
\int_0^{2\gamma\sqrt{a}} h_\gamma^+ = \int_0^{\sqrt \gamma} h_\gamma^++ \int_{\sqrt{\gamma}}^{2\gamma\sqrt{a}-\sqrt{\gamma}}  h_\gamma^+ + \int_{2\gamma\sqrt{a}-\sqrt{\gamma}}^{2\gamma\sqrt{a}}  h_\gamma^+ \leq \frac{C}{\sqrt \gamma} + C\sigma(\gamma),
\end{align}
which converges to zero as $\gamma\to \infty$. Analogously, since $(\supp h_\gamma^-)\cap [0,\infty)\subseteq [0,2\gamma\sqrt{a})$, by \eqref{hc1}, \eqref{hc2} and Lemma \ref{lem:cpness:est} we have 
\begin{align}\label{small-claim2}
\int_0^\infty h_\gamma^-\nu_\gamma^+ 
&= 
\int_0^{\sqrt \gamma} h_\gamma^-\nu_\gamma^++ \int_{\sqrt{\gamma}}^{2\gamma\sqrt{a}-\sqrt{\gamma}}  h_\gamma^-\nu_\gamma^+ + \int_{2\gamma\sqrt{a}-\sqrt{\gamma}}^{2\gamma\sqrt{a}}  h_\gamma^-\nu_\gamma^+\nonumber\\
& \leq 
\frac{C}{\gamma} \int_{0}^{\sqrt \gamma} \nu_\gamma^+ + C\,\frac{\sigma(\gamma)}{\gamma}\int_{\sqrt \gamma}^{2\gamma\sqrt{a}-\sqrt \gamma} \nu_\gamma^+ 
+ \frac{C}{\gamma} \int_{2\gamma\sqrt{a}-\sqrt \gamma}^{2\gamma\sqrt{a}} \nu_\gamma^+ \nonumber\\
&\leq 
\frac{C}{\sqrt \gamma} + C\sigma(\gamma) \to 0\quad \textrm{as} \quad \gamma\to \infty.
\end{align}
The bounds \eqref{small-claim1} and \eqref{small-claim2} above imply \eqref{small-claim}, and hence the first inequality in \eqref{for:pf:liminf:ineqs}.

\medskip

We now prove the second inequality in \eqref{for:pf:liminf:ineqs}. From \eqref{for:F}, \eqref{for:cpness:result} and Step~3 of the compactness proof, we obtain immediately that
\begin{equation*}
  \liminf_{\gamma \rightarrow \infty} \| T \nu_\gamma \|^2_2 \geq \| T \nu \|^2_2.
\end{equation*}
Therefore, to conclude the proof of the liminf inequality, it is sufficient to show that 
\begin{equation*} 
\int_0^\infty g \nu_\gamma 
\to \int_0^\infty g \nu
\quad \text{as} \quad \gamma \to \infty.
\end{equation*}
Since $g$ is continuous, it follows from the vague convergence of $\nu_\gamma$ to $\nu$ that $\int_0^M g \nu_\gamma \to \int_0^M g \nu$ as $\gamma \to 0$ for any $M > 0$ fixed. The remaining part of the integral is arbitrarily small for $M$ large enough, uniform in $\gamma$, by Lemma~\ref{lem:cpness:est} and Lemma~\ref{lem:g:nu:bounds}.

\bigskip

\textbf{Upper bound.} Let $\nu \in \mathcal{A}$. We first show in Steps 1 and 2 that by the usual density argument it is sufficient to prove \eqref{for:thm:first:order:Gconv:limsup} for $\nu \in L^2(0, \infty)$ with bounded support and $\nu > -\rhomin (0)$. 

\medskip
\noindent
\textit{Step 1. Approximation of $\nu$ in $L^2 (0, \infty)$.}
We leave the proof to Appendix \ref{app:Step:1}.

\medskip
\noindent
\textit{Step 2. Truncation and retraction.} 
For $\nu \in \mathcal{A} \cap L^2 (0, \infty)$, we set 
\[\nu_n := \Big( 1 - \frac1n \Big) \nu\llcorner[0,n].\] 
It is easy to see that $\nu_n \in \mathcal A \cap L^2 (0, \infty)$ satisfies the desired bound $\nu_n > -\rhomin (0)$, and that $\nu_n$ converges both in the vague topology and in $L^2(0, \infty)$ to $\nu$. Convergence of $F(\nu_n)$ to $F(\nu)$ as $n \to \infty$ follows from $L^2$-continuity of $F : L^2 (0, \infty) \to \R$ (by Lemma \ref{lem:T}.\eqref{lem:T:BLSO} and $g \in L^2(0, \infty)$).

\medskip
Next we construct a recovery sequence $(\nu_\gamma)$ for $\nu \in \mathcal A \cap L^2(0, \infty)$ with support in $[0, M]$ and $\nu > -\rhomin (0)$. We define
\begin{equation*} 
\nu_\gamma (x) := \nu (x) - \sigma_\gamma \tilderhomin (x), \qquad \sigma_\gamma := \frac1\gamma \int_0^M \nu.
\end{equation*}
It is clear that $\nu_\gamma$ converges to $\nu$ in both the vague and $L^2$-topology. We conclude that $\nu_\gamma \in \mathcal A_\gamma$, since $\int_0^\infty \nu_\gamma = 0$ and $\nu_\gamma \geq - \tilderhomin$ for sufficiently large $\gamma$.

It remains to prove that $F_\gamma (\nu_\gamma) \rightarrow F(\nu)$ as $\gamma\to \infty$. We use \eqref{diff:en:nuc} to write
\begin{align} \label{for:pf:Fc:rec:seq}
F_\gamma (\nu_\gamma)
 = F (\nu_\gamma) + \int_0^M h_\gamma \nu - \sigma_\gamma \int_0^{2\sqrt a \gamma} h_\gamma \tilderhomin.
\end{align}
By using again continuity of $F$, we obtain that $F (\nu_\gamma) \to F (\nu)$ as $\gamma \to \infty$. With the bound on $h_\gamma$ given by \eqref{hc1} we show that the remaining two terms in the right-hand side of \eqref{for:pf:Fc:rec:seq} are small by the estimates
\begin{equation*}
  \bigg| \int_0^{M} h_\gamma \nu \bigg| 
  \leq \sqrt M \, \frac C\gamma \| \nu \|_2,
  \quad \text{and} \quad
  \bigg| \sigma_\gamma \int_0^{2\sqrt a \gamma} h_\gamma \tilderhomin \bigg|
  \leq \sigma_\gamma (2\sqrt a \gamma) \frac C\gamma \rhomin (0).
\end{equation*}
\end{proof}

\section{Numerics}\label{sec:numerics}
In this section we present some numerical results which illustrate the scaling properties of the boundary layer and the degree of fit or misfit between the predictions of the continuum limit energies $E$ and $F$, and the minimisers of the discrete energy $E_n$. The numerical simulations presented in this section are all obtained for the specific choice of the potential given in \eqref{defV}.

\subsection{Rescaling of the density.}\label{sec:scalings} The density $\nu$ appearing in the boundary layer energy $F_\gamma$ in~\eqref{Fc} is defined by blowing up the pile-up domain by a factor $\gamma$. This behaviour is confirmed by the $\Gamma$-convergence result. We now illustrate some formal calculations and numerical results that suggest a similar scaling for the \emph{discrete} densities.

As predicted by the asymptotics in \cite{Hall11}, Figure \ref{fig:tilde:rho:n2} illustrates that the size of the boundary layer region at the lock is $\mathcal O (1/\sqrt n)$ in the case $\gamma_n = \sqrt n$. In Figure \ref{fig:tilde:rho:n2} we have  computed the dislocation wall positions $x^n_\ast$ by using Newton's method to solve $\nabla E_n (x) = 0$. To define a discrete density in terms of the rescaled coordinates $\gamma_n x^n_\ast$, we mimic the transformation $(\gamma_n)_\to$ (defined for the continuum densities in \eqref{def:scalingOperator}):
\begin{equation} \label{for:tilde:rho:n}
\tilde \rho_n (x) := \rho_n (x / \gamma_n),
\qquad
x \in \left\{ \gamma_n x^n_{\ast, i} : i = 1,\ldots,n-1 \right\}.
\end{equation}

Although Figure \ref{fig:tilde:rho:n2} illustrates the size of the boundary-layer profile, the slopes of the densities at the right-end of the domain are different. This difference is a result of the $n$-independent bulk behaviour of the unscaled densities $\rho_n$, which converge to the bulk profile~$\rhomin$ (cf.~\cite[Thm.~7]{GeersPeerlingsPeletierScardia13}).

For this reason it is more instructive to compare the densities $\nu_n := \tilde \rho_n - \tilderhomin$, which we do in Figure~\ref{figs1011}. 
\begin{figure}[h]
\centering
\small
\subfigure[]{%
\labellist
\pinlabel \scriptsize $n=2^8$ at 326 255
\pinlabel \scriptsize $n=2^6$ at 325 233
\pinlabel \scriptsize $n=2^4$ at 325 210
\endlabellist
\includegraphics[width=2.6in]{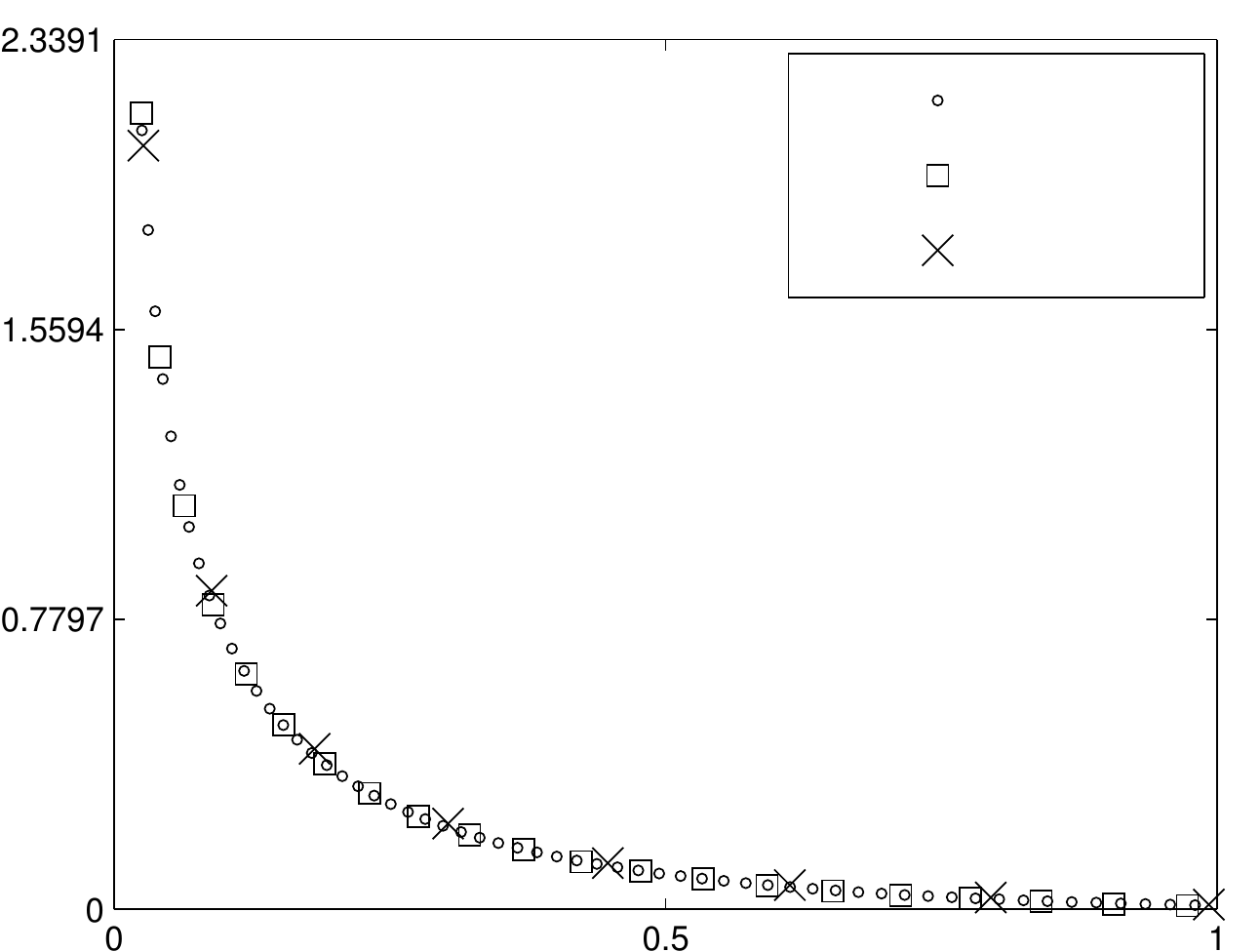}}
\qquad
\subfigure[]{%
\labellist
\pinlabel \scriptsize $\gamma_n=n^{3/4}$ at 318 250
\pinlabel \scriptsize $\gamma_n=\sqrt{n}$ at 314 227
\pinlabel \scriptsize $\gamma_n=n^{1/4}$ at 318 206
\endlabellist
\includegraphics[width=2.6in]{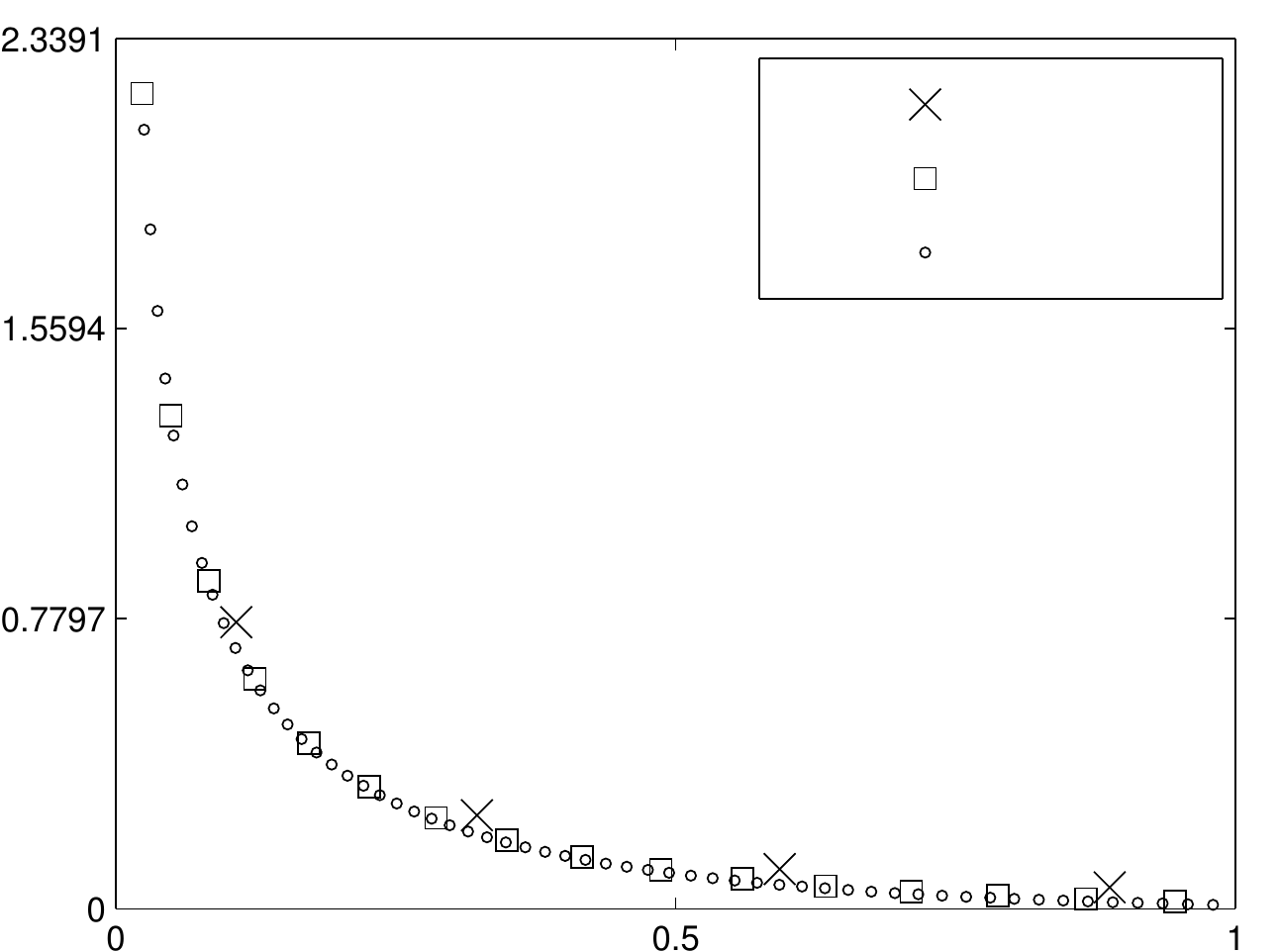}}
\caption{Plots of the rescaled density $\nu_n$ for (a) $\gamma_n=n^{1/4}$ and various values of $n$; (b) $n=2^8$ and various scalings of $\gamma_n$.}
\label{figs1011}
\end{figure}
The good agreement between the profiles of $\nu_n$ for different values of $n$ and different scaling of $\gamma_n$ (conform $1 \ll \gamma_n \ll n$) suggest that also in the discrete case the density must be rescaled as in~\eqref{for:tilde:rho:n}. 

\subsection{Rescaling of the energy}

From the analysis in this paper one could try to infer that also for the discrete energies~$E_n$ in~\eqref{discreteEcn} the right scaling to analyse the boundary layer at the lock is $1/\gamma_n$, namely that $|E_n (x^n_\ast) - E_n(\rhomin^n)|\propto 1/\gamma_n$. Here, $\rhomin^n$ are the discrete dislocation wall positions generated by $\rhomin$, i.e.,
\[
\rhomin^n = \frac1n \sum_{i=1}^n \delta_{y_i^n},
\quad
\text{where } 
\quad 
y_i^n := 2 \sqrt a \left( 1 - \sqrt{ 1 - \frac in } \right) 
\quad
\text{ solves } 
\quad 
\int_0^{y_i^n} \rhomin = \frac in.
\]
We investigate this question by means of numerical computations, for $\gamma_n$ being a power of $n$. Setting $\alpha_n := |E_n (x^n_\ast) - E_n(\rhomin^n)|$, and assuming a power-law relation also for $\alpha_n$, namely $\alpha_n \propto n^{-p}$, we try to determine $p$ by computing
\begin{equation} \label{for:pn}
  p_n := \frac{ \log \alpha_n - \log \alpha_{2n} }{\log 2},
  \qquad
  \text{for } n = 2^3,2^4,\ldots,2^{11},
\end{equation}
for different expressions of $\gamma_n$. The results are shown in Table \ref{tab:pn}. The data in the first two columns support our conjecture that $p_n \approx \log_n \gamma_n $ for sufficiently large $n$, although the third column seems to contradict it. This disagreement could be due to a much slower convergence to the correct value, or to the presence of a second boundary layer at the free end of the pile-up (see Figure \ref{fig:long:range:disc:vs:ct}), whose energy contribution interferes with the contribution of the boundary layer at the lock. At this stage we can only guess as to the reasons for this discrepancy, and we will return to this issue in a future publication.

\begin{table}[h!]                    
\centering                           
\begin{tabular}{cccc} \toprule   
 & \multicolumn{3}{c}{$\log_n \gamma_n$} \\ \cmidrule(r){2-4}                                 
$n$ & $\dfrac14$ & $\dfrac12$ & $\dfrac34$ \\ \midrule                   
$2^3$  & 0.550 & 0.901 & 0.931 \\                                       
$2^4$  & 0.384 & 0.757 & 0.833 \\                                       
$2^5$  & 0.299 & 0.642 & 0.745 \\                                         
$2^6$  & 0.263 & 0.567 & 0.675 \\                                         
$2^7$  & 0.250 & 0.525 & 0.625 \\                                         
$2^8$  & 0.247 & 0.505 & 0.590 \\                                         
$2^9$  & 0.247 & 0.496 & 0.567 \\                                         
$2^{10}$  & 0.247 & 0.494 & 0.552 \\                                         
$2^{11}$  & 0.248 & 0.493 &  \\ \bottomrule
\end{tabular}     
\medskip                   
\caption{Exponents $p$ in \eqref{for:pn} for a power law fit of $|E_n (x^n_\ast) - E_n(\rho^n_\ast)| \propto n^{-p}$, for different values of $n$ and $\gamma_n = n^{1/4}, n^{1/2}, n^{3/4}$. The lower-right value is missing because of memory limitations in MATLAB. }
\label{tab:pn}           
\end{table}   

\subsection{Minimiser of the continuum energy $F$.}\label{Min}

In this section we analyse the minimiser of $F$, namely we consider the minimisation problem:
\begin{equation*}
  \min_{\nu \in \mathcal A} F(\nu),
\end{equation*}
where $F$ and $\mathcal A$ are defined in \eqref{for:F} and \eqref{for:dom:F}. We assume in addition that the minimiser $\nu_\ast$ is in $L^1(0, \infty)$ and satisfies the strict inequality\footnote{Note that the numerically calculated minimisers (Figure~\ref{fig:long:range:disc:vs:ct}) are consistent with this assumption.} $\inf \nu_\ast > -\rhomin(0)$, and derive a first-order necessary condition for minimality for $\nu_\ast$. We consider variations $\nu_\ast + t \mu \in \mathcal A$ for $\mu \in \mathcal M([0, \infty))$ with bounded support and bounded Lebesgue density,  and $|t|$ sufficiently small. Then
\begin{equation*}
0 = \frac d{dt} F (\nu_\ast + t \mu) |_{t=0} = \int_0^\infty (V \ast \nu_\ast) \mu - \int_0^\infty g \mu,
\end{equation*}
and since $\mu$ is arbitrary, this implies that
\begin{equation} \label{for:ELeqn}
V \ast \nu_\ast = g = \frac1{\sqrt a} V \ast \chi_{(-\infty, 0]}
\qquad
\text{a.e. on } (0, \infty).
\end{equation}

To approximate the solution $\nu_\ast$ to \eqref{for:ELeqn} numerically, we follow the method in Section~10.5 of \cite{Heath01}. We approximate $\nu_\ast$ by a step function of the form 
\begin{equation} \label{for:apx:nu}
\nu_\ast \approx \sum_{i=1}^N \lambda_i \chi_{I_i},
\end{equation}
where $\lambda_i$ are the unknowns, while $N$ and the sets $I_i$ are chosen beforehand. Figure \ref{figs1011} suggests to take small intervals $I_i = (a_{i-1}, a_i)$ close to $0$, and larger intervals in the region $x>1$. We choose the intervals $I_i$ so that $|I_i| = C b^i$, and then tune $C$, $b$ and $N$ such that $a_1 = 3 \cdot 10^{-5}$, $200 \in I_N$, and the interval $I_j$ containing the element $1$ has length $|I_j| = 0.1$. This results in $C = 2.727 \cdot 10^{-5}$, $b = 1.1$, and $N = 141$.

After having fixed $a_i$ and $N$, we substitute the approximation \eqref{for:apx:nu} in the Euler-Lagrange equation \eqref{for:ELeqn}, and we solve for $\lambda_i$ the linear system
\begin{equation} \label{for:lin:sys:lambdai}
\sum_{i=1}^N \lambda_i (V \ast \chi_{I_i})(y_j) = \frac1{\sqrt a} (V \ast \chi_{(-\infty, 0]}) (y_j)
\qquad
\text{for } y_j = \frac{a_j - a_{j-1}}2, \: j = 1,\ldots,N.
\end{equation}
This amounts to an asymmetric Galerkin (collocation) approximation, in which the solution space is approximated by piecewise constant functions, and the test space by sums of delta functions supported on the midpoint of each interval. 

Note that we can identify $(V \ast \chi_{I_i})(y_j) = \int_{x_j - a_i}^{x_j - a_{i-1}} V$, and that we have explicitly
\[
\int_y^\infty V = \text{Li}_2 (e^{-2y}) - y \ln (1 - e^{-2 y}), 
\]
where $\text{Li}_2$ is the polylogarithm,
\[
\text{Li}_s(z) = \sum_{k=1}^\infty \frac{z^k}{k^s}.
\]
The solution to the linear system \eqref{for:lin:sys:lambdai} results in the profile of $\nu_\ast$ \eqref{for:apx:nu} as depicted in Figure \ref{fig:nuast:alone}. 

\begin{figure}[h]
\labellist
\pinlabel $\nu_\ast (y_i)$ at 425 520
\pinlabel $\nu_\ast$ at 430 493
\endlabellist
\begin{center}
\includegraphics[width=3in]{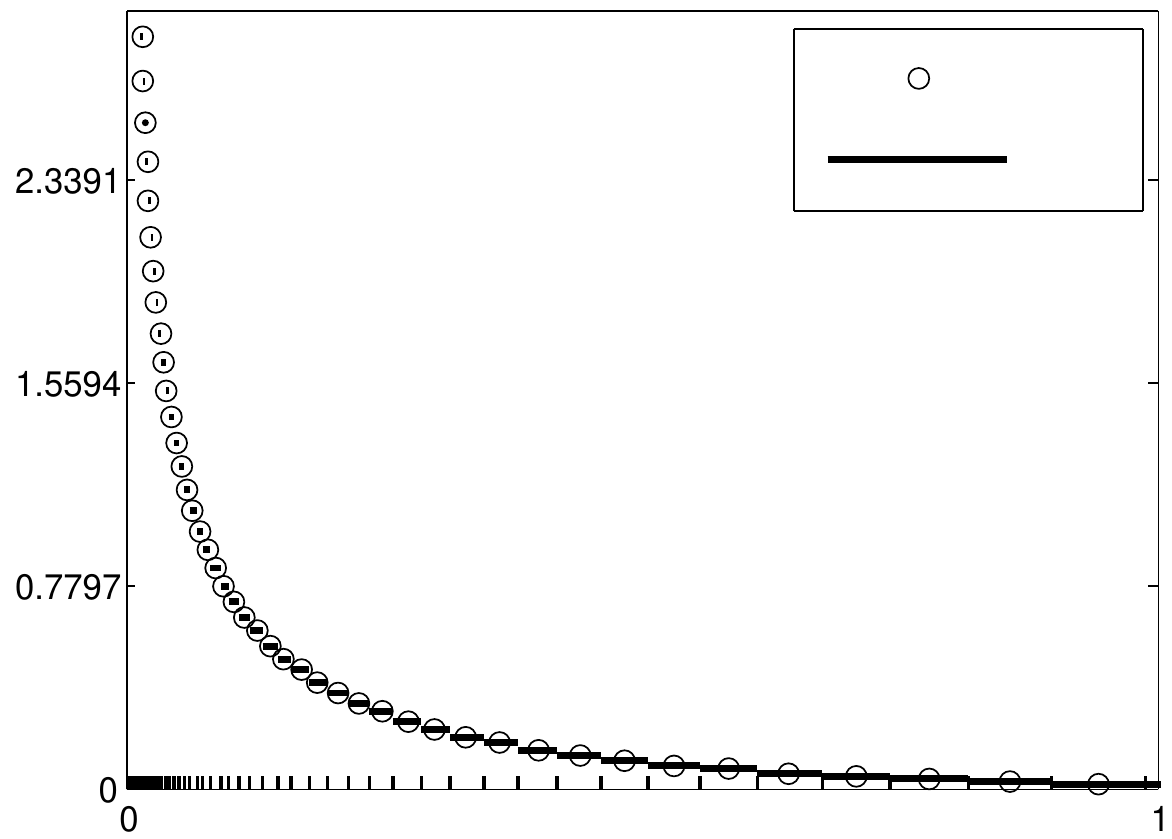}
\end{center}
\caption{Numerical approximation of $\nu_\ast$ \eqref{for:apx:nu}. The intervals $I_i$ are depicted on the $x$-axis. The circles indicate the coordinates $(y_i, \nu_\ast (y_i))$.}
\label{fig:nuast:alone}
\end{figure}

In Figure \ref{fig:BL:disc:vs:ct} we compare $\nu_\ast$ to the discrete density $\nu_n=\tilde\rho_n-\tilderhomin$ constructed from the minimiser $\rho_n$ of $E_n$. In order to make the figure cleaner, instead of plotting the (mathematically correct) piecewise-constant version of Figure~\ref{fig:nuast:alone}, we plot the affine interpolation of the coordinates $(y_i, \nu_\ast (y_i))$.  
\begin{figure}[h]
\labellist
\pinlabel \small $\nu_n$ at 335 242.5
\pinlabel \small $\nu_\ast$ at 335 220
\endlabellist
\begin{center}
\includegraphics[width=3in]{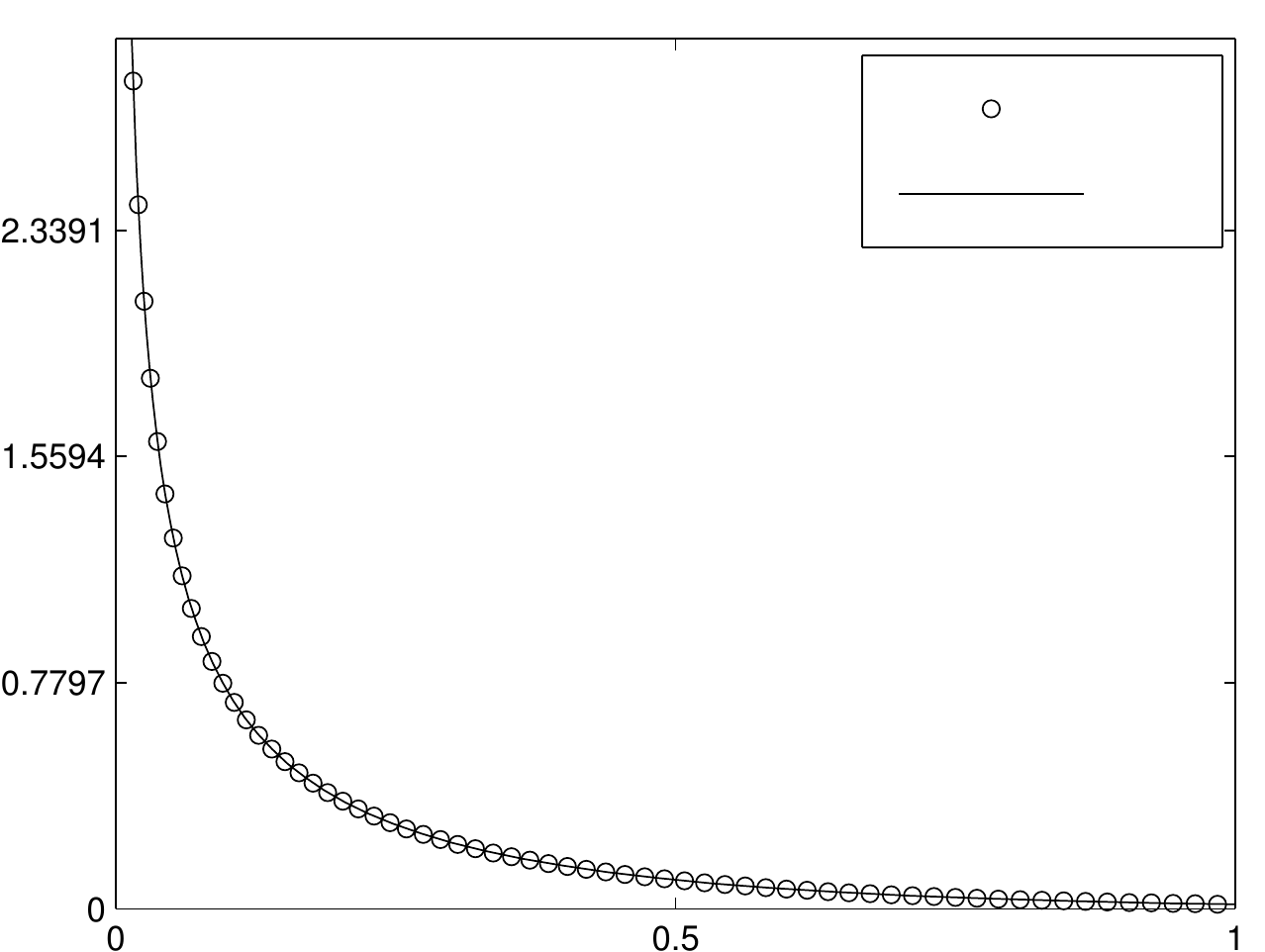}
\end{center}
\caption{Comparison between the discrete density $\nu_n$ for $n = 2^{12}$ and $\gamma_n = \sqrt n$ and the minimiser $\nu_\ast$ of $F$. Only the interval $(0,1)$ is shown, which corresponds to the region $(0,1/\gamma_n)$ for the unscaled densities.}
\label{fig:BL:disc:vs:ct}
\end{figure}
The figure shows a very good agreement between $\nu_n$ and $\nu_\ast$ in the boundary layer region. As we have seen from Figure \ref{figs1011}, the profile of $\nu_n$ depends neither on $n$ nor on the scaling of $\gamma_n$. Therefore, together with Figure \ref{fig:BL:disc:vs:ct}, we conclude that $\nu_\ast$ is the boundary-layer profile of the dislocation density at the lock $x=0$.

\medskip
On the other hand, the agreement between $\nu_n$ and $\nu_\ast$ is not very good outside of the boundary layer region, as shown in Figure \ref{fig:long:range:disc:vs:ct}, where we plot the densities on $(1,2\gamma_n\sqrt{a})$ on a logarithmic scale (remember that $(0,2\gamma_n\sqrt{a})$ is the whole of the pile-up domain, after blowing up the variables by~$\gamma_n$). 
In Section~\ref{sect:firstorder} we proved that $F_\gamma$ $\Gamma$-converges to $F$ with respect to vague convergence, hence minimisers of the energies $F_\gamma$ converge to $\nu_\ast$, minimiser of the limit energy $F$, only on bounded domains. It is therefore not surprising that the densities $\nu_n$, the discrete analogue of the minimiser of $F_\gamma$, do not agree with $\nu_\ast$ on the whole pile-up domain, which is of order $\gamma_n$ in the blown-up variables, and hence growing in size with $n$. 

From Figure \ref{fig:long:range:disc:vs:ct} we can actually infer that there is another `continuous' boundary layer at the right-end of the pile-up domain, since there is a large number of particles deviating from the bulk distribution $\tilderhomin$ (i.e., with density $\nu_n$ deviating from $0$). We expect the profile of this boundary layer to resemble the bulk behaviour of the optimal discrete density of the energy \eqref{discreteEcn} for $\gamma_n=n$. We refer to \cite[Theorem 8]{GeersPeerlingsPeletierScardia13} and to Figure~6 in the same paper for the analysis of that case. The analysis of the boundary layer at the free end of the pile-up region is however beyond the scope of this paper.

\medskip
Finally, as noted in the introduction, $\nu_n$ (or $\nu_\ast$) curiously attains small negative values before matching the line $y=0$. As this `dip' is present in the numerical approximations of both $\nu_n$ and $\nu_\ast$, it does not appear to be  a numerical artifact. We remain with questions regarding how this `dip' relates to the choice of $V$. Does every convex decreasing $V$ with finite first moment have such a `dip'? Are there choices of $V$ for which $\nu_\ast$ is decreasing? Does $V$ have more than one `dip' (the numerical solutions for $\nu_n$ and $\nu_\ast$ are too coarse to give a speculative answer), and how does the answer depend on the choice of $V$? The answers to these questions are  beyond the scope of this paper. 
\medskip

\bigskip

\noindent
\textbf{Acknowledgements.}
The work of MAP and PvM is supported by NWO Complexity grant 645.000.012 and NWO VICI grant 639.033.008. 
LS acknowledges the support of The Carnegie Trust. All authors are grateful to Cameron Hall, John Ockendon and Jon Chapman for various interesting discussions.
\bigskip

\appendix

\section{Properties of the operator $T$}
\label{app:T}

Here we prove Lemma \ref{lem:T} in more generality through Lemma \ref{lem:T:app}. The generality comes for free after introducing the Hilbert spaces $X_{k,j}$ in \eqref{for:defn:Xkj}, which are naturally related to the Fourier transform. 

First, we recall in Lemma \ref{lem:F} several basic properties of the Fourier transform and of the complex Hilbert spaces defined by
\begin{equation*} 
\left. \begin{aligned}
  X_{k,j} (\C) := \Big\{ f \in H^k(\R; \C) : x^{j} f(x) \in L^2(\R; \C) \Big\} & \\
  ( f, \phi )_{X_{k,j}} := \sum_{\ell = 0}^k \int_\R f^{(\ell)} \overline{\phi^{(\ell)}} + \int_\R x^{2j} f (x) \overline{\phi (x)} \, dx&
\end{aligned} \right\} \quad \text{for all } k,j \in \N{}.
\end{equation*}
In case the functions are real-valued, we set
\begin{equation} \label{for:defn:Xkj}
  X_{k,j} := \Big\{ f \in H^k(\R) : x^{j} f(x) \in L^2(\R) \Big\}.
\end{equation}
In particular, we note that $X$ defined in \eqref{for:defn:X} equals $X_{1,2}$, and that $L^2 (\R) = X_{0,0} \supset X_{k,j} \supset \mathcal S (\R)$ for all $k,j \in \N$, where $\mathcal S (\R)$ is the space of Schwarz functions. The analogous statement for complex-valued functions holds as well.

\begin{lem}[Basic properties of $\mathcal F$ and $X_{k,j} (\C)$] \label{lem:F}\noindent
\begin{enumerate}[(i)]
  \item \label{lem:F:i} $\displaystyle \mathcal F \big( x \mapsto x^k f^{(\ell)} (x) \big)
  = i^{\ell + k} (2 \pi)^{\ell - k} \Big( \omega \mapsto \frac{d^k}{d \omega^k} \omega^\ell \widehat f (\omega) \Big)
  \qquad \text{for all } k, \ell \in \N$;
  \smallskip
  \item \label{lem:F:iplus} If $\mu \in \mathcal P (\R)$, then $\mathcal F \mu$ is a positive definite, bounded, uniformly continuous function; 
  \item \label{lem:F:ii} $ \mathcal F $ is a symmetric, invertible, bounded linear operator between $X_{k,j} (\C)$ and $X_{j,k} (\C)$ for all $k,j \in \N{}$;
  \item \label{lem:F:iii} For $k \in \N{}_+$, $X_{k,k} (\C)$ and $X_{k,k}$ are Banach algebras with respect to both the pointwise product and convolution;
  \item \label{lem:F:iv} Let $k \in \N{}_+$, $f \in X_{k,k} (\C)$ and $\xi \in X_{k,k} (\C)'$. Then $\mathcal F ( f \ast \xi ) = \widehat f \,\widehat \xi$ in $X_{k,k} (\C)'$. 
 \end{enumerate}
\end{lem}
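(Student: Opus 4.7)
The plan is to handle the five parts of Lemma \ref{lem:F} in order, building each one on the previous ones. The results are classical in harmonic analysis, so the proof is essentially a bookkeeping exercise: verify identities on Schwartz functions, then extend by density or duality.

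For part (i), I would first prove the identity on $f \in \mathcal{S}(\R)$ by combining the two elementary formulas $\mathcal{F}(f^{(\ell)})(\omega) = (2\pi i\omega)^\ell \widehat f(\omega)$ (by integrating by parts $\ell$ times, using the decay of $f$) and $\mathcal{F}(x^k g)(\omega) = (i/(2\pi))^k (d^k/d\omega^k) \widehat g(\omega)$ (by differentiating under the integral sign). Composing them gives the claim on $\mathcal{S}(\R)$. The extension to $X_{k,j}(\C)$ then follows by density of $\mathcal{S}(\R)$ in $X_{k,j}(\C)$ (truncate and mollify) together with continuity of both sides in the $X_{k,j}(\C)$-norm — or, equivalently, by reading the identity in $\mathcal{S}'(\R)$ and noting that each side lies in $L^2$. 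For (ii), boundedness is $|\widehat\mu(\omega)| \le \mu(\R) = 1$, uniform continuity follows from dominated convergence applied to $|e^{-2\pi i x(\omega+h)} - e^{-2\pi i x\omega}|$, and positive definiteness is the standard calculation $\sum_{j,k} c_j \overline{c_k} \widehat\mu(\omega_j - \omega_k) = \int |\sum_j c_j e^{-2\pi i x\omega_j}|^2\, d\mu \ge 0$.

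For part (iii), the point is that $f \in X_{k,j}(\C)$ is characterised in Fourier variables as $\omega^\ell \widehat f \in L^2$ for $0 \le \ell \le k$ (via Plancherel applied to $f^{(\ell)}$) together with $\widehat f{}^{(j)} \in L^2$ (via Plancherel applied to $x^j f$ and part (i)). Both conditions together give precisely $\widehat f \in X_{j,k}(\C)$, and the same argument yields the uniform norm bound, hence boundedness of $\mathcal{F}: X_{k,j}(\C) \to X_{j,k}(\C)$. The inverse $\mathcal{F}^{-1}$ maps $X_{j,k}(\C)$ to $X_{k,j}(\C)$ by the same reasoning, and symmetry $\int \widehat f \, g = \int f\, \widehat g$ is Fubini on Schwartz functions, extended by $L^2$-continuity.

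For part (iv), I use the 1D Sobolev embedding $H^1(\R) \hookrightarrow L^\infty(\R)$, which gives $X_{k,k}(\C) \hookrightarrow L^\infty$ for $k \ge 1$. The Leibniz rule $(fg)^{(\ell)} = \sum_m \binom{\ell}{m} f^{(m)} g^{(\ell-m)}$ combined with pairings of $L^2$ and $L^\infty$ norms (placing the larger-order factor in $L^2$ and the other in $L^\infty$) yields $\|fg\|_{H^k} \le C\|f\|_{X_{k,k}}\|g\|_{X_{k,k}}$, while $\|x^k fg\|_2 \le \|x^k f\|_2 \|g\|_\infty + \|f\|_\infty \|x^k g\|_2$ handles the weighted part; so $X_{k,k}(\C)$ is a Banach algebra for pointwise product. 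The convolution algebra structure then follows by conjugating through $\mathcal{F}$ using (iii), which maps $X_{k,k}(\C)$ to itself as a topological isomorphism and exchanges convolution with pointwise product on the Schwartz dense subset.

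The main obstacle is part (v), because one must first make sense of $f \ast \xi$ and $\widehat f\, \widehat\xi$ as elements of $X_{k,k}(\C)'$, and then verify their Fourier relationship. I would define $f \ast \xi$ by duality via $\langle f \ast \xi, \phi\rangle := \langle \xi, \check f \ast \phi\rangle$ where $\check f(x) = f(-x)$; this is well posed since $\check f \ast \phi \in X_{k,k}(\C)$ by (iv). I would similarly define $\widehat f\, \widehat \xi \in X_{k,k}(\C)'$ by $\langle \widehat f\, \widehat \xi, \psi\rangle := \langle \widehat\xi, \widehat f\, \psi\rangle$, where $\widehat f\, \psi \in X_{k,k}(\C)$ again by (iv), and the Fourier transform on $X_{k,k}(\C)'$ by $\langle \mathcal F\eta, \psi\rangle := \langle \eta, \widehat \psi\rangle$. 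The identity $\mathcal F(f \ast \xi) = \widehat f\, \widehat \xi$ then reduces, after unwrapping the definitions, to the assertion that $\mathcal F(\check f \ast \widehat\psi) = \widehat f \cdot \psi$, which is the classical convolution-product identity on Schwartz functions (using $\widehat{\check f} = \widehat f$ in the convention where $\mathcal F \mathcal F \psi(x) = \psi(-x)$, together with Fourier inversion). This last identity holds on the dense subspace $\mathcal{S}(\R)$ and extends by continuity thanks to the boundedness in (iii) and the algebra property in (iv).
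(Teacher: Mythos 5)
Your proof is correct, but there is nothing in the paper to compare it against: the authors explicitly only \emph{recall} Lemma~\ref{lem:F} as a collection of classical facts about the Fourier transform and the weighted Sobolev spaces $X_{k,j}(\C)$, and give no proof. Your write-up supplies the standard arguments one would expect — the exchange formulas on $\mathcal S(\R)$ extended by density for (i), the easy direction of Bochner's theorem for (ii), the Plancherel characterisation of $X_{k,j}(\C)$ in Fourier variables for (iii), the Leibniz rule plus $H^1(\R)\hookrightarrow L^\infty(\R)$ for the pointwise algebra and conjugation by $\mathcal F$ for the convolution algebra in (iv), and the duality definitions of $f\ast\xi$ and $\widehat f\,\widehat\xi$ for (v) — and all of these are sound. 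One small bookkeeping slip in (v): with the convention $\mathcal F\mathcal F\psi=\check\psi$ one has $\widehat{\check f}=\check{\widehat f}$ rather than $\widehat f$, so the identity your unwrapping actually requires is $\check f\ast\widehat\psi=\mathcal F(\widehat f\,\psi)$ (equivalently $\mathcal F^{-1}(\check f\ast\widehat\psi)=\widehat f\,\psi$), not $\mathcal F(\check f\ast\widehat\psi)=\widehat f\,\psi$ as written; the correct version follows from the same product--convolution identity on $\mathcal S(\R)$, so the argument goes through unchanged.
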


\begin{lem} \label{lem:T:app}
The linear operator $T$ defined by \eqref{operator:T:L2} satisfies the following properties.
\begin{enumerate}[(i)]
  \item \label{lem:T:app:real} $T$ maps $L^2 (\R)$ to itself;
  \item \label{lem:T:app:BLSO} For all $k \in \N$ and $j = 0,1,2$, $T$ is a symmetric bounded linear operator that maps $X_{k,j}$ to itself, and $p \mathcal I := (\varphi \mapsto p \varphi)$ is a symmetric bounded linear operator that maps $X_{j,k}$ to itself;
  \item \label{lem:T:app:extension} For all $k \in \N$ and $j = 0,1,2$, the operator $p \mathcal I$ is symmetric, bounded and linear, and maps $X_{j,k}'$ to itself. Furthermore, the extension of $T$ to $X_{k,j}'$ given by
\begin{equation*} 
\text{for } \xi\in X_{k,j}', \ f \in X_{k,j}, \qquad
\langle T\xi,f\rangle 
:= \langle \xi,Tf\rangle 
= \big\langle \mathcal F^{-1} ( p \widehat \xi ), f \big\rangle
\end{equation*}
is well-defined as a symmetric bounded linear operator mapping $X_{k,j}'$ to itself;
  \item \label{lem:T:app:T2:equals:Vast} For all $\xi \in X_{2,2}'$, it holds that $T^2 \xi = V \ast \xi$, and $\displaystyle \| T \xi \|_2^2 = \int \widehat V | \widehat \xi |^2$ as elements of $[0, \infty]$;
  \item \label{lem:T:app:Tnu2:equals:Vnunu} For all $\nu \in \mathcal M ([0,\infty))$ for which there exists $f \in X_{1,1}$ such that $\nu + f \geq 0$ is a finite measure, it holds that 
  \begin{equation*}
    \int_\R (T \nu)^2(x) dx = \int_0^\infty (V \ast \nu)(x) \nu (dx).
  \end{equation*}
\end{enumerate}
\end{lem}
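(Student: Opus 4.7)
Parts \eqref{lem:T:app:real}--\eqref{lem:T:app:T2:equals:Vast} are essentially Fourier-analytic bookkeeping which I would dispatch in sequence, relying on Lemma \ref{lem:F}. For \eqref{lem:T:app:real}: $V\in L^1$ forces $\widehat V\in L^\infty$, hence $p=\sqrt{\widehat V}\in L^\infty$, and Plancherel gives $\|T\|_{L^2\to L^2}\le\|p\|_\infty$; realness of $Tf$ for real $f$ follows because $p$ is real and even (forced by $V$ being real and even), which preserves Hermitian symmetry under multiplication. For \eqref{lem:T:app:BLSO}: the regularity $p\in W^{2,\infty}(\R)$ from (V3) and Leibniz's rule make $\varphi\mapsto p\varphi$ bounded on $X_{j,k}$ for $j\in\{0,1,2\}$ (the weighted $L^2$ piece uses only $\|p\|_\infty$, the $H^j$ piece uses $\|p^{(\ell)}\|_\infty$ for $\ell\le j\le 2$), and composing with Lemma~\ref{lem:F}\eqref{lem:F:ii} gives boundedness of $T$ on $X_{k,j}$; $L^2$-symmetry $(Tf,g)_{L^2}=\int p\,\widehat f\,\overline{\widehat g}=(f,Tg)_{L^2}$ restricts to $X_{k,j}$ since $X_{k,j}\subset L^2$. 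For \eqref{lem:T:app:extension}: the dual extension is well-defined and bounded by \eqref{lem:T:app:BLSO}, and the identification $T\xi=\mathcal F^{-1}(p\widehat\xi)$ is obtained by chasing definitions through the dual pairings. For \eqref{lem:T:app:T2:equals:Vast}: iterating \eqref{lem:T:app:extension} gives $T^2\xi=\mathcal F^{-1}(\widehat V\widehat\xi)=:V*\xi$ in $X_{2,2}'$ (the Fourier prescription is legitimate since $\widehat V=p^2\in W^{2,\infty}$ multiplies $X_{2,2}$ boundedly into itself), and $\|T\xi\|_2^2=\int\widehat V|\widehat\xi|^2$ is standard Plancherel when $T\xi\in L^2$, with both sides equalling $+\infty$ otherwise.

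The substance is part \eqref{lem:T:app:Tnu2:equals:Vnunu}. The first observation is that $f\in X_{1,1}$ forces $f\in L^1\cap L^\infty$: the $1$D Sobolev embedding $H^1\hookrightarrow L^\infty$ gives $L^\infty$, and splitting $\int|f|$ on $|x|<1$ versus $|x|\ge 1$ with Cauchy--Schwarz against $xf\in L^2$ gives $L^1$. Consequently $\nu=\mu-f$ is a \emph{finite} signed Borel measure, $\widehat\nu\in C_b(\R)$, and the bound $|\langle\nu,\varphi\rangle|\le\|\varphi\|_\infty\|\nu\|_{TV}\lesssim\|\varphi\|_{X_{2,2}}\|\nu\|_{TV}$ places $\nu\in X_{2,2}'$. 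Applying \eqref{lem:T:app:T2:equals:Vast} yields $\int(T\nu)^2=\int\widehat V|\widehat\nu|^2$ in $[0,\infty]$, so it remains to establish
\begin{equation*}
\int_\R \widehat V(\omega)\,|\widehat\nu(\omega)|^2\,d\omega
\;=\;\int_0^\infty(V*\nu)(x)\,\nu(dx)
\qquad\text{in }[0,\infty].
\end{equation*}

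I would attack this identity by Gaussian mollification of $\nu$. With $\phi_\e(x):=\e^{-1/2}e^{-\pi x^2/\e}$ (so $\widehat{\phi_\e}(\omega)=e^{-\pi\e\omega^2}$) and $\nu_\e:=\nu*\phi_\e\in L^1\cap L^2\cap C^\infty$, the $L^2$-Plancherel identity (justified by $V\in L^1$ and $\nu_\e\in L^2$) gives, for each $\e>0$,
\begin{equation*}
\int_\R(V*\nu_\e)(x)\,\nu_\e(x)\,dx
\;=\;\int_\R\widehat V(\omega)\,|\widehat\nu(\omega)|^2\,e^{-2\pi\e\omega^2}\,d\omega
\;=\;\iint V_\e(x-y)\,\nu\otimes\nu(dx\,dy),
\end{equation*}
where $V_\e:=V*\phi_\e*\phi_\e$. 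As $\e\downarrow 0$ the middle expression increases monotonically to $\int\widehat V|\widehat\nu|^2$, so that limit is free.

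The main obstacle is passing to the limit in the rightmost expression, i.e.~showing $\iint V_\e\,d(\nu\otimes\nu)\to\int(V*\nu)\,d\nu$ in $[0,\infty]$. I would decompose $\nu\otimes\nu$ into its four non-negative components $\nu^\sigma\otimes\nu^\tau$ with $\sigma,\tau\in\{+,-\}$ (Jordan decomposition) and analyse each $\iint V_\e\,d(\nu^\sigma\otimes\nu^\tau)$ separately, using (V4) to approximate $V$ monotonically from below by continuous bounded $V_k$ with $\widehat{V_k}\ge 0$ and combining the $k$-limit and $\e$-limit through monotone convergence on each of the four non-negative double integrals. When $\iint V\,d(|\nu|\otimes|\nu|)<\infty$, all four limits are finite and combine to $\int(V*\nu)\,d\nu$ via Fubini. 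When this integral is $+\infty$---which can occur when $\nu$ carries atoms, since the logarithmic singularity of $V$ at $0$ produces $(V*\nu^\sigma)(x_0)=+\infty$ at any $\nu^\sigma$-atom $x_0$---the matching infinity of the left-hand side $\int\widehat V|\widehat\nu|^2$ follows from Fatou's lemma applied to the middle expression (which is already known to increase monotonically). The main technical subtlety, in my view, is that (V4) does not supply any monotonicity of $\widehat{V_k}$ on the Fourier side, so the argument must route through the spatial-side representation at each intermediate pair $(\e,k)$ and treat the four sign-combinations $\nu^\sigma\otimes\nu^\tau$ with care to avoid an ill-defined $\infty-\infty$.
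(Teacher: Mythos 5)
Parts \eqref{lem:T:app:real}--\eqref{lem:T:app:T2:equals:Vast} of your plan coincide with the paper's proof (Leibniz bounds for $p\mathcal I$ on $X_{j,k}$ using $p\in W^{2,\infty}$, conjugation by $\mathcal F$, duality extension, and Lemma~\ref{lem:F}); the only gloss is that for a general $\xi\in X_{2,2}'$ the quantity $\int\widehat V|\widehat\xi|^2$ must itself be \emph{defined} (the paper does this via the dual norm $\|p\widehat\xi\|_{L^2(\R)'}$), but since you only invoke it for finite measures, where $\widehat\nu\in C_b$, this is harmless.

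The genuine gap is in part \eqref{lem:T:app:Tnu2:equals:Vnunu}, at the step ``$\iint V_\e\,d(\nu^+\otimes\nu^+)\to\iint V\,d(\nu^+\otimes\nu^+)$''. Your tools deliver only one inequality. Fatou (using $\liminf_\e V_\e\ge V$, which holds by lower semicontinuity and monotonicity of $V$) gives $\liminf_\e\iint V_\e\,d(\nu^+\otimes\nu^+)\ge\iint V\,d(\nu^+\otimes\nu^+)$, and this settles the case where the spatial integral is $+\infty$. But when $\iint V\,d(\nu^+\otimes\nu^+)<\infty$ you assert the convergence ``via Fubini'', and Fubini is not the issue: the issue is a reverse-Fatou bound for $V_\e=V\ast\phi_\e\ast\phi_\e$, which is neither monotone in $\e$ nor dominated by $V$, against a measure $\nu^+\otimes\nu^+$ that may charge the diagonal (atoms of $\nu^+$ meeting the singularity of $V$ at $0$). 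Equivalently, you are missing the implication ``$\int(V\ast\mu)\mu<\infty\Rightarrow\int\widehat V|\widehat\mu|^2\le\int(V\ast\mu)\mu$'' (the paper's \eqref{lem:T:pf:4:a}). The paper proves this by a different mechanism that has no counterpart in your sketch: it first replaces the Jordan decomposition by $\nu=\mu-f$ with $\mu\ge0$ finite and $f\in X_{1,1}$ (so that $V_k\le V$ can be integrated against the non-negative $\mu\otimes\mu$, an inequality that is unavailable for the signed $\nu\otimes\nu$ since $\widehat{V-V_k}$ need not be non-negative), and then uses the variational identity $\int(V_k\ast\mu)\mu=\sup_\varphi\bigl[2\int(V_k\ast\varphi)\mu-\int(V_k\ast\varphi)\varphi\bigr]$ in the completions $H_{V_k}$ of $L^2$ under the $V_k$-quadratic forms, followed by monotone convergence in $k$ and the density of $TX_{1,1}$ in $X_{1,1}$ (this is where $\widehat V>0$ enters). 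Your Gaussian-mollification computation does reproduce the paper's other half, \eqref{lem:T:pf:4:b}, but without a substitute for the duality argument the identity in \eqref{lem:T:app:Tnu2:equals:Vnunu} is not established in the finite case.
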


\begin{proof}[Proof of Lemma \ref{lem:T:app}] We now prove the claims \textit{(i)}--\textit{(v)}.

 \textit{(i)}. Let $f \in L^2 (\R)$ be arbitrary. Since $p \in W^{2, \infty} (\R) \subset L^\infty(\R)$, it holds that $p \widehat f \in L^2(\R; \C)$, and thus $T f \in L^2(\R; \C)$. Moreover, since $\widehat f (-\omega) = \overline{\widehat f (\omega)}$ and $p$ is real-valued and even, we also have that $(p \widehat f) (-\omega) = \overline{(p \widehat f) (\omega)}$. Hence, $Tf = \mathcal F^{-1} \big( p\widehat f \big)$ is real-valued.
\smallskip

 \textit{(ii)}. Since $T = \mathcal F^{-1} (p \mathcal I) \mathcal F$, Lemma \ref{lem:F}.\eqref{lem:F:ii} implies that it suffices to prove that $p \mathcal I := (\varphi \mapsto p \varphi)$ is a symmetric bounded linear operator mapping $X_{j,k}$ to itself. Symmetry is obvious. We prove boundedness by showing the existence of some $C > 0$ such that $\| p f \|_{j,k} \leq C \| f \|_{j,k}$ for all $f \in X_{j,k}$. We obtain such a constant $C$ from the following computations, which rely on $p$ belonging to $W^{2, \infty} (\R)$: for $0\leq \ell \leq j\in \{0,1,2\}$ we have
\begin{equation*}
      \left\| ( p f )^{(\ell)} \right\|_2
      \leq \sum_{i = 0}^\ell \bin \ell i \left\| p^{(i)} f^{(\ell-i)} \right\|_2 
      \leq C \sum_{i = 0}^\ell \big\| p^{(i)} \big\|_\infty \left\| f^{(\ell-i)} \right\|_2 
      \leq C \left\| p \right\|_{W^{2, \infty}} \left\| f \right\|_{k,j};
\end{equation*}
moreover
\begin{equation*}
      \| x^k p f \|_2
      \leq \|p\|_\infty \| x^k f \|_2
      \leq \|p\|_\infty \| f \|_{j,k}.
\end{equation*}

\smallskip

 \textit{(iii)}. It follows directly from the previous step that $p \mathcal I$ and $T$ are symmetric bounded linear operators on $X_{j,k}'$ and $X_{k,j}'$ respectively. Since $p$ is an even function, we obtain the representation formula for $T \xi$ with $\xi \in X_{k,j}'$ from
\begin{equation*}
  \langle T \xi, f \rangle
  = \langle \xi, T f \rangle
  = \big\langle \xi, \mathcal F^{-1} \big( p \widehat f \big) \big\rangle
  = \langle \widecheck \xi, p \widehat f \rangle
  = \langle \widehat \xi, p \widecheck f \rangle
  = \langle p \widehat \xi, \widecheck f \rangle
  = \big\langle \mathcal F^{-1} \big( p \widehat \xi \big), f \big\rangle,
  \quad \text{for all } f \in X_{k,j}.
\end{equation*}

 \textit{(iv)}. By (V2) we have that $V \in L^1 (\R) \subset X_{1,1}'$. Then, Lemma \ref{lem:F}.\eqref{lem:F:iv} implies that $T^2 f = V \ast f$ for all $f \in X_{k,k}$ for any $k \in \N_+$. By using the extension of $T$ in the previous step, which requires $k \leq 2$, we obtain $T^2 \xi = V \ast \xi$ for all $\xi \in X_{2,2}'$.

Next we prove that $\| T \xi \|_2^2 = \int \widehat V | \widehat \xi |^2$ as elements of $[0, \infty]$. To avoid confusion, we introduce a different notation for the extension of the $L^2$-norm to linear functionals. For any $\eta \in X_{2,2}'$, we define
\begin{equation*} 
  \| \eta \|_{L^2(\R)'} 
  := \sup_{f \in X_{2,2}} \frac{ \langle \eta, f \rangle }{ \| f \|_2 }
  \in [0, +\infty],
\end{equation*}
which is consistent with the usual definition of $\| \cdot \|_{L^2(\R)'}$ (in which the supremum is taken instead over all $f \in L^2(\R)$) because $X_{2,2}$ is dense in $L^2(\R)$. Likewise, we interpret $\int \widehat V | \widehat \xi |^2$ as $\| p \widehat \xi \|_{L^2(\R)'}^2$. The assertion follows from step~\textit{(iii)} by
\begin{equation*}
  \| p \widehat \xi \|_{L^2(\R)'}
  = \| \mathcal F T \xi \|_{L^2(\R)'}
  = \| T \xi \|_{L^2(\R)'},
\end{equation*}
where the Fourier transform does not change the value of the $L^2(\R)'$-norm. 
\smallskip

\textit{(v)}. It is not clear how $\int (V \ast \nu) \nu$ is defined without using the asserted structure that $f \in X_{1,1}$ and $\mu := \nu + f \geq 0$ is a finite measure. Therefore, we define
  \begin{align} \label{lem:T:pf:3}
    \int (V \ast \nu) \nu
    = \int \big( V \ast (\mu - f) \big) (\mu - f)
    := \int (V \ast \mu) \mu - 2 \int (V \ast f) \mu + \int (V \ast f) f.
  \end{align}
We claim that the right-hand side is well-defined in $(-\infty, \infty]$. Indeed, since $f \in X_{1,1}$, we have that $V \ast f = T^2 f \in X_{1,1} \subset C_b (\R)$. Moreover, since $\mu$ is a finite measure, the second and third integrals are finite. Since $V$ and $\mu$ are non-negative, we find that $V \ast \mu \geq 0$, and thus the first integral is defined as an element of $[0, \infty]$. Moreover, since $\| V \ast \mu \|_1 \leq \|V\|_1 \|\mu\|_{TV} < \infty$, Fubini applies to $\int (V \ast f) \mu = \int (V \ast \mu) f$, which shows that the definition in \eqref{lem:T:pf:3} is consistent with the case in which $\int (V \ast \mu) \mu < \infty$. 

As in step \textit{(iv)}, we interpret $\int (T \nu)^2$ as $\| T \nu \|_{L^2(\R)'}^2 \in [0, \infty]$. Here we rewrite it in terms of $f$ and $\mu$. Whenever convenient, we regard $\mu$ as a bounded linear operator on $C_b (\R)$, defined by $\langle \mu, f \rangle := \int_\R f \, d\mu$. Since $C_b (\R) \supset X_{1,1}$, it follows that $\mu \in C_b (\R)' \subset X_{1,1}'$, and thus $T \mu \in X_{1,1}'$ is well-defined by step \textit{(iii)}.

From $T\nu = T \mu + (Tf, \, \cdot \, )_2$ we obtain that
$T \nu \in L^2(\R)'
  \Longleftrightarrow
  T \mu \in L^2(\R)'$.
If $T \nu \in L^2(\R)'$, then we set $f_\nu$ and $f_\mu$ as the Riesz representatives in $L^2(\R)$ of $T \nu$ and $T \mu$ respectively. We obtain
\begin{equation} \label{lem:T:pf:2}
  \| T \nu \|_{L^2(\R)'}^2
  = \| f_\nu \|_2^2
  = \| f_\mu \|_2^2 - 2 (f_\mu, Tf)_2 + \| Tf \|_2^2
  = \| T \mu \|_{L^2(\R)'}^2 - 2 \langle T \mu, Tf \rangle + \int (V \ast f) f.
\end{equation}
If $T \nu \notin L^2(\R)'$, then we claim that the right-hand side in \eqref{lem:T:pf:2} equals $+\infty$. Indeed, $T \nu \notin L^2(\R)'$ implies $T \mu \notin L^2(\R)'$ and thus $\| T \mu \|_{L^2(\R)'} = + \infty$. It remains to check that $| \langle T \mu, Tf \rangle | < \infty$. Since $\mu \in X_{1,1}'$ and $f \in X_{1,1}$, we have that $T\mu \in X_{1,1}'$ and $Tf \in X_{1,1}$, and thus $\langle T \mu, Tf \rangle$ is indeed finite. In conclusion, \eqref{lem:T:pf:2} holds for any $\nu$ as in the assumption.

It remains to prove that the right-hand sides of \eqref{lem:T:pf:3} and \eqref{lem:T:pf:2} are equal. The equality of the second terms in both expressions follows from 
\begin{equation*}
  \langle T \mu, Tf \rangle
  = \langle \mu, T^2 f \rangle
  = \int (V \ast f) \mu,
\end{equation*}
and thus it remain to show that $\int (V \ast \mu) \mu = \| T \mu \|_{L^2(\R)'}^2$ in $[0, \infty]$. We prove this statement by establishing 
the following results:
\begin{subequations}
\label{lem:T:pf:4}
\begin{alignat}4
\label{lem:T:pf:4:a}
&\int (V \ast \mu) \mu & \,< \infty 
&&\quad \Longrightarrow \quad &\| T \mu \|_{L^2(\R)'}^2 &\leq \int (V \ast \mu) \mu, \\
&\| T \mu \|_{L^2(\R)'} &< \infty
&&\quad \Longrightarrow  \quad &\int (V \ast \mu) \mu \, &\leq \| T \mu \|_{L^2(\R)'}^2.
\label{lem:T:pf:4:b}
\end{alignat}
\end{subequations}

We first prove \eqref{lem:T:pf:4:a}. Given any $n \in \N$, let $V_n$ be as in (V4). Since $\widehat{V_n}, \widehat V \in L^\infty (\R)$ are non-negative, they define (semi-)inner products on $L^2 (\R)$ by
\begin{equation*}
  (f, \varphi)_V := \int (V \ast f) \varphi = \int \widehat V \widehat f \widehat \varphi,
  \quad \text{and} \quad 
  (f, \varphi)_{V_n} := \int (V_n \ast f) \varphi = \int \widehat{ V_n } \widehat f \widehat \varphi.  
\end{equation*} 
We construct Hilbert spaces as the closure of $L^2 (\R)$ with respect to the corresponding norms $\| \cdot \|_V$ and $\| \cdot \|_{V_n}$:
\begin{equation*}
  H_V := \overline{ L^2 (\R) }^{ \| \cdot \|_V },
  \quad \text{and} \quad 
  H_{V_n} := \overline{ L^2 (\R) }^{ \| \cdot \|_{V_n} }.
\end{equation*}
Since it is given that $\int (V \ast \mu) \mu < \infty$, we obtain $\mu \in H_V$. Because $V_n \leq V$ and $\mu \geq 0$, it holds that $\int (V_n \ast \mu) \mu < \int (V \ast \mu) \mu$, and thus $\mu \in H_{V_n}$. Therefore, we obtain
\begin{equation*}
  \int (V_n \ast \mu) \mu 
  = \sup_{\varphi \in X_{1,1}} \bigg[ 2 \int (V_n \ast \varphi) \mu - \int (V_n \ast \varphi) \varphi \bigg].
\end{equation*}
Next, we fix $\varphi \in X_{1,1}$, and use $V_n \to V$ in $L^1(\R)$ and $V_n \uparrow V$ pointwise together with the Monotone Convergence Theorem to pass to the limit $n \to \infty$ in the previous expression, to get
\begin{multline*}
  \int (V \ast \mu) \mu 
  = \lim_{n \to \infty} \int (V_n \ast \mu) \mu
  \geq \lim_{n \to \infty} \bigg[ 2 \int (V_n \ast \varphi) \mu - \int (V_n \ast \varphi) \varphi \bigg] \\
  = 2 \int (V \ast \varphi) \mu - \int (V \ast \varphi) \varphi
  = 2 \langle T \mu, T \varphi \rangle - \int (T \varphi)^2.
\end{multline*}
Taking the supremum over $\varphi \in X_{1,1}$, we obtain 
\begin{equation*}
  \int (V \ast \mu) \mu 
  \geq \sup_{\phi \in T X_{1,1}} \bigg[ 2 \langle T \mu, \phi \rangle - \int \phi^2 \bigg],
\end{equation*}
where $T X_{1,1} := \{T f : f \in X_{1,1} \}$. It follows from $p > 0$ that $T X_{1,1}$ is dense in $X_{1,1}$. Therefore,
\begin{equation*}
  \int (V \ast \mu) \mu 
  \geq \sup_{\varphi \in X_{1,1}} \bigg[ 2 \langle T \mu, \varphi \rangle - \int \varphi^2 \bigg]
  = \| T \mu \|_{L^2(\R)'}^2.
\end{equation*}

Next, we prove \eqref{lem:T:pf:4:b}. Since it is given that $L^2(\R)' \ni T\mu = \mathcal{F}^{-1}(p \widehat \mu)$, it follows from $\widehat \mu \in C_b (\R; \C)$ (by Lemma \ref{lem:F}.\eqref{lem:F:iplus}) that $p \widehat \mu$ is a measurable function, and thus we can write 
\begin{equation} \label{lem:T:pf:5}
  \| T \mu \|_{L^2(\R)'}^2 = \int |p \widehat \mu|^2 = \int \widehat V |\widehat \mu|^2.
\end{equation}

We introduce two regularisations: we regularise $V$ again by $V_n$, and we set $\mu_\varepsilon := \widehat{G_\varepsilon} \ast \mu \in L^1 (\R) \cap C_b (\R)$, where $G_\varepsilon$ is the scaled Gaussian satisfying
  \begin{equation*} 
    G_\varepsilon (x) := \exp(-\pi \varepsilon x^2) \weakto 1 \text{ vaguely as } \varepsilon \to 0,
    \quad \text{and} \quad 
    \widehat{G_\varepsilon} = \varepsilon^{-1/2} G_{ 1 / \varepsilon } \weakto \delta_0 \text{ as } \varepsilon \to 0.
  \end{equation*} 
First, we use $V_n \in C_b(\R)$ in combination with $\mu_\varepsilon \otimes \mu_\varepsilon \weakto \mu \otimes \mu$ as $\varepsilon \to 0$ to deduce that 
\begin{equation*}
  \int (V_n \ast \mu_\varepsilon) \mu_\varepsilon
  = \iint_{\R^2} V_n (x - y) \, d (\mu_\varepsilon \otimes \mu_\varepsilon)(x,y)
  \xrightarrow{ \varepsilon \to 0 } \iint_{\R^2} V_n (x - y) \, d (\mu \otimes \mu)(x,y)
  = \int (V_n \ast \mu) \mu.
\end{equation*}
Then, we use that $\mu_\varepsilon \geq 0$ and $V_n \leq V$ to estimate
\begin{equation*}
  \int (V_n \ast \mu_\varepsilon) \mu_\varepsilon
  \leq \int (V \ast \mu_\varepsilon) \mu_\varepsilon
  = \int \widehat V | \widehat{ \mu_\varepsilon } |^2
  = \int \widehat V G_\varepsilon^2 | \widehat{ \mu } |^2.
\end{equation*}
Since $G_\varepsilon^2 \uparrow 1$ pointwise on $\R$, we obtain from the Monotone Convergence Theorem and \eqref{lem:T:pf:5} that
\begin{equation*}
  \int \widehat V G_\varepsilon^2 | \widehat{ \mu } |^2
  \xrightarrow{ \varepsilon \to 0 } \int \widehat V | \widehat{ \mu } |^2
  = \| T \mu \|_{L^2(\R)'}^2.
\end{equation*}
Summarizing our arguments, we have
\begin{equation*}
  \int (V_n \ast \mu) \mu 
  = \lim_{\varepsilon \to 0} \int (V_n \ast \mu_\varepsilon) \mu_\varepsilon
  \leq \| T \mu \|_{L^2(\R)'}^2.
\end{equation*}
We conclude by taking the supremum over all $n$.
\end{proof}

\bigskip

\begin{rem}[Assumptions on $V$ needed for Lemma \ref{lem:T:app}]
For $T$ to be a bounded operator from $X$ to itself (cf.~Lemma \ref{lem:T:app}.\eqref{lem:T:app:BLSO}), more regularity than $p \in L^\infty (\R)$ is required. In the proof we showed that $p \in W^{2, \infty} (\R)$ is sufficient.

The proof of Lemma \ref{lem:T:app}.\eqref{lem:T:app:Tnu2:equals:Vnunu} is technical, and we need (V4) to complete it. Instead of the requirement $p > 0$, it is sufficient to have $|\{p = 0\}| = 0$. However, we need the requirement $p > 0$ elsewhere (for constructing a recovery sequence) to control the tail of $\nu$.
\end{rem}

\section{Technical step in the proof of \eqref{for:thm:first:order:Gconv:limsup}}
\label{app:Step:1}

Here we prove Step 1 of the proof of \eqref{for:thm:first:order:Gconv:limsup}, i.e., we show that it is sufficient to prove \eqref{for:thm:first:order:Gconv:limsup} for $\nu \in \mathcal A \cap L^2 (0, \infty)$.

First we show by density arguments that we can restrict ourselves to those $\nu \in \mathcal A$ with support away from $0$. For any $\nu \in \mathcal A$, we consider $\nu_c := \tau^c_\# \nu$ for any $c>0$, where $\tau^c (x) := x + c$ acts as a horizontal shift. It is clear that $\nu_c$ converges vaguely to $\nu$ as $c \to 0$, and that $\nu_c$ satisfies the upper and lower bounds in the definition of $\mathcal A$. Since by \eqref{operator:T:L2} $T$ commutes with~$\tau^c_\#$, it follows that $ \| T \nu_c \|_2 = \| T \nu \|_2<\infty$. Hence, $\nu_c \in \mathcal A$, and to complete the density argument it remains to show that $\limsup_{c \to 0} \int g \nu_c = \int g \nu$. By splitting the integrals as follows
\begin{equation*}
  \int g \nu - \int g \nu_c 
  = \int_0^M \big[ g(x) - g(x + c) \big] \nu + \int_M^\infty \big[ g(x) - g(x + c) \big] \nu,
\end{equation*}
it is clear that for any $M > 0$ the first term in the right-hand side converges to zero, because $g$ is continuous, and the second term is small in $M$ by Lemma \ref{lem:g:nu:bounds} and Lemma \ref{lem:cpness:est}, by a similar argument as in Step 3 in the proof of Theorem \ref{thm:first:order:Gamma:conv}.

Next, we take $\nu \in \mathcal A$ such that $\supp \nu \subset [c, \infty)$ for some $c > 0$. We consider the approximation $\tilde \nu_\varepsilon := \psi_\varepsilon \ast \nu$, where $\psi_\varepsilon$ is the rescaling of a carefully chosen mollifier $\psi$ (constructed below), and its restriction $\nu_\varepsilon := \tilde \nu_\varepsilon \llcorner[0,\infty)$. We choose $\psi$ as the Fourier transform of 
\begin{equation*}
  \widehat \psi (\omega) 
  := \left\{ \begin{aligned}
    &(1 - \alpha |\omega|)^3 \big( 3 \alpha |\omega| + 1 \big),
    && \text{if } |\omega| \leq 1/\alpha, \\
    &0
    && \text{otherwise,}
  \end{aligned} \right. 
\end{equation*}
where $\alpha > 0$ is chosen such that $\int_\R \widehat \psi = 1$. The function $\widehat \psi$ is constructed in \cite[Table 9.1]{Wendland05} to have compact support and $C^2$-regularity while being non-negative and positive definite \cite[Thm.~9.13]{Wendland05}. Then, by Bochner's Theorem, we also have that $\psi \geq 0$ is positive definite. By basic properties of the Fourier transform we find that $0 \leq \widehat \psi \leq 1$, $0 \leq \psi \leq 1$, $\widehat \psi (1) = \psi (1) = 1$, and $\psi, \widehat \psi \in X_{3,3} \subset X$. For the scaling in $\varepsilon$, we set
\begin{equation*}
  \psi_\varepsilon (x) = \frac1\e \psi \Bigl(\frac x\varepsilon\Bigr),
  \quad \text{and thus} \quad
  \widehat{ \psi_\varepsilon } (\omega) = \widehat \psi ( \varepsilon \omega).
\end{equation*}
We note that $\psi_\varepsilon \weakto \delta_0$ narrowly and $\widehat{ \psi_\varepsilon } \uparrow 1$ pointwise as $\varepsilon \to 0$. 

To check that $\nu_\varepsilon = \tilde \nu_\varepsilon \llcorner[0,\infty)$ is admissible, i.e.,~$\nu_\varepsilon \in \mathcal A$, we observe from $\psi_\varepsilon \geq 0$ having unit mass that the bounds on $\nu_\varepsilon^\pm$ are satisfied. We show that $T \nu_\varepsilon \in L^2 (\R)$ later in this proof.

Now we prove the vague convergence of $\nu_\varepsilon$ to $\nu$. Let $M > 0$, and let $\varphi \in C_b ([0, \infty))$ be an arbitrary function with support in $[0, M]$. Since $\psi_\varepsilon$ is even, we rewrite
\begin{equation} \label{for:app:Step:1:1}
  \int_0^M \varphi \nu_\varepsilon
  = \int_0^M \varphi \tilde \nu_\varepsilon
  = \int_0^M \varphi (\psi_\varepsilon \ast \nu)
  = \int_0^{M+1} (\psi_\varepsilon \ast \varphi) \nu + \int_{M+1}^\infty (\psi_\varepsilon \ast \varphi) \nu.
\end{equation}
Since $\psi_\varepsilon \ast \varphi$ converges to $\varphi$ uniformly in $\R$ as $\varepsilon \to 0$, the first integral in the right-hand side of \eqref{for:app:Step:1:1} converges to $\int \varphi \nu$. The second term in the right-hand side is bounded by Lemma \ref{lem:g:nu:bounds}, since $\psi_\e\ast \varphi \in X$, and moreover small in $\varepsilon$ since $\varphi$ is supported in $[0, M]$.

We observe that the second term of the energy $F (\nu_\varepsilon)$ converges to the corresponding term of the energy $F (\nu)$, i.e.,
\begin{equation*}
  \int_0^\infty g \nu_\varepsilon 
  \to \int_0^\infty g \nu
  \quad \text{as} \quad \varepsilon \to 0,
\end{equation*}
by Lemma \ref{lem:g:nu:bounds} and $g \in C_b ([0,\infty))$. It remains to show that $\nu_\varepsilon \in L^2 (\R)$ and that the first term of the energy $F (\nu_\varepsilon)$ satisfies the bound $\limsup_{\varepsilon \to 0} \| T \nu_\varepsilon \|_2 \leq \| T \nu \|_2$. Since $\widehat V > 0$, from Lemma \ref{lem:T:app}.\eqref{lem:T:app:T2:equals:Vast} we deduce that $\widehat \nu \in L^2_{\text{loc}} (\R; \C)$. Then, since $\widehat{ \psi_\varepsilon } \in X_{2,2}$, Lemma \ref{lem:F}.\eqref{lem:F:iv} implies that
\[
  \mathcal{F}\tilde \nu_\varepsilon
  = \widehat{ \psi_\varepsilon \ast \nu }
  = \widehat{ \psi_\varepsilon } \widehat{\nu}.
\]
We conclude from $\widehat{ \psi_\varepsilon }$ having bounded support and $\widehat \nu \in L^2_{\text{loc}} (\R; \C)$ that $\mathcal F \tilde \nu_\varepsilon \in L^2 (\R; \C)$, and thus $\tilde \nu_\varepsilon, \nu_\varepsilon \in L^2 (\R)$. 

Finally, we show that $\limsup_{\varepsilon \to 0} \| T \nu_\varepsilon \|_2 \leq \| T \nu \|_2$. To this aim, we split
\begin{equation} \label{for:app:Step:1:2}
  \| T \nu_\varepsilon \|_2 
  \leq \| T \tilde \nu_\varepsilon \|_2 + \big\| T \big({\tilde\nu_\varepsilon} \llcorner(-\infty, 0) \big) \big\|_2.
\end{equation}
We obtain from $\widehat{ \psi_\varepsilon } \leq 1$ and Lemma \ref{lem:T:app}.\eqref{lem:T:app:T2:equals:Vast} that
\begin{equation*}
  \| T \tilde \nu_\varepsilon \|_2^2 
  = \int_\R \widehat V \big| \widehat{ \tilde \nu_\varepsilon } \big|^2
  = \int_\R \widehat{ \psi_\varepsilon }^2 \widehat V | \widehat{ \nu } |^2
  \leq \int_\R \widehat V | \widehat{ \nu } |^2
  = \| T \nu \|_2^2.
\end{equation*} 
It remains to show that the second term in the right-hand side of \eqref{for:app:Step:1:2} is small in $\varepsilon$. Since $\hat \psi'' \in L^1(\R)$, $x^2\psi(x)\in C_b(\R)$, and therefore $\psi_\e(x) \leq C\e x^{-2}$. Since $\supp \nu \subset [c, +\infty)$ with $c > 0$ and $\sup_{x \geq 0} \nu([x,x+1]) < \infty$, it follows that for $x<0$
\[
|{\tilde \nu_\varepsilon}(x)|
\leq C\e \int_c^\infty \frac1{(x-z)^2} \,\nu(dz)
\leq C\e \sum_{k = 0}^\infty \frac1{(-x+ k+c)^2}.
\]
Hence, $\| {\tilde\nu_\varepsilon} \|_{L^2 (-\infty, 0)} \to 0$, and thus the second term in \eqref{for:app:Step:1:2} is small in $\varepsilon$.

\bibliographystyle{alpha} 
\bibliography{BLrefs}

\end{document}